\newtheorem{theorem}{Theorem}[section]
\newtheorem{proposition}[theorem]{Proposition}
\newtheorem{lemma}[theorem]{Lemma}
\newtheorem{corollary}[theorem]{Corollary}
\theoremstyle{definition}
\newtheorem{definition}[theorem]{Definition}
\newtheorem{remark}[theorem]{Remark}
\newtheorem{question}[theorem]{Question}
\DeclareMathOperator{\GL}{GL}
\DeclareMathOperator{\Syl}{Syl}
\DeclareMathOperator{\Aut}{Aut}
\DeclareMathOperator{\Alt}{Alt}
\DeclareMathOperator{\dG}{d}
\DeclareMathOperator{\Cyc}{C}
\DeclareMathOperator{\ZG}{Z}
\newcommand{\Z}{\mathbb{Z}}
\newcommand{\F}{\mathbb{F}}
\newcommand{\msc}[1]{\href{https://zbmath.org/classification/?q=#1}{#1}}
\newcommand{\gen}[1]{\langle #1\rangle}
\numberwithin{equation}{section}
\renewcommand\subsection{\@startsection{subsection}{2}%
	\z@{.5\linespacing\@plus.7\linespacing}{-.5em}%
	{\normalfont \bfseries}}
\begin{document}

	\title{On finite $d$-maximal groups}
	\author[A.~Lucchini]{Andrea Lucchini}
	\address[Andrea Lucchini]{Universit\`a di Padova, Dipartimento di Matematica \lq\lq Tullio Levi-Civita\rq\rq}
	\email{lucchini@math.unipd.it}
	\author[L.~Sabatini]{Luca Sabatini}
	\address[Luca Sabatini]{ Alfr\'ed R\'enyi Institute of Mathematics, Budapest}
	\email{sabatini@renyi.hu}
 \author[M.~Stanojkovski]{Mima Stanojkovski}
	\address[Mima Stanojkovski]{Universit\`a di Trento, Dipartimento di Matematica}
	\email{mima.stanojkovski@unitn.it}

 \makeatletter
\@namedef{subjclassname@2020}{
 \textup{2020} Mathematics Subject Classification}
\makeatother

\subjclass[2020]{\msc{20D10}, \msc{20D15}, \msc{20D45}, \msc{20F05}, \msc{20F16}}
\keywords{Solvable groups, $p$-groups, $d$-maximal groups, actions through characters.}
	
	\date{\today}

	\begin{abstract}
Let $d$ be a positive integer. A finite group is called $d$-maximal if it can be generated by precisely $d$ elements, while its proper subgroups have smaller generating sets. For $d\in\{1,2\}$, the $d$-maximal groups have been classified up to isomorphism and only partial results have been proved for larger $d$. In this work, we prove that a $d$-maximal group is supersolvable and we give a characterization of $d$-maximality in terms of so-called \emph{maximal $(p,q)$-pairs}. Moreover, we classify the maximal $(p,q)$-pairs of small rank obtaining, as a consequence, the classification of the isomorphism classes of $3$-maximal finite groups.
\end{abstract}

	\maketitle

\section{Introduction}

Let $G$ be a finite group and let $\dG(G)$ denote its minimum number of generators. Let $d$ be a positive integer.

\begin{definition}
	A finite group $G$ is said to be \emph{$d$-maximal} if $\dG(G)=d$ and, for every proper subgroup $H$ of $G$, one has $\dG(H)<d$.
\end{definition}

The only $1$-maximal groups are the cyclic groups of prime order, while the $2$-maximal groups -- also called \emph{minimal non-cyclic} -- have been classified by Miller and Moreno \cite{MM03}: up to isomorphism, if $G$ is a minimal non-cyclic group, then $G$ is an elementary abelian $p$-group of rank 2, the quaternion group $Q_8$, or there are distinct primes $p$ and $q$ such that $G=P\rtimes Q$ is a semidirect product of a cyclic group $P$ of order $p$ with a cyclic $q$-group $Q$ and $Q/C_Q(P)$ has order $q.$ The structure of $d$-maximal $p$-groups has been investigated by Laffey \cite{Laf73}. Adapting an argument of J.\ G.\ Thompson, he proved that, if $p$ is an odd prime and $P$ is a $d$-maximal $p$-group, then $P$ has class at most 2 and the Frattini subgroup of $P$ has exponent $p$ and coincides with its derived subgroup; in particular $|P|\leq p^{2d-1}.$  The situation for $p = 2$ turned out to be much more intricate. In 1996, Minh \cite{Min96} constructed a 4-maximal 2-group of class 3 and order $2^8.$ Nowadays groups of order $2^8$ can be examined using a computer. There are 20241 groups $G$ of order $2^8$ with $\dG(G)=4,$ and only two of them are 4-maximal with nilpotency class 3. All the known $d$-maximal 2-groups are of class at most $3$ and the following question is open:

\begin{question}
Are there $d$-maximal $2$-groups of arbitrary large nilpotency class?  
\end{question}

Clearly a nilpotent $d$-maximal group must be a $p$-group. In this paper, we are interested in $d$-maximal groups in the more general situation where $G$ is not nilpotent. Using the classification of the finite non-abelian simple groups, we prove that a finite $d$-maximal group is solvable and its order is divisible by at most two different primes, as the next result shows.

 \begin{theorem}\label{main}
 Let $G$ be a non-nilpotent $d$-maximal group. Then there exist distinct primes $p$ and $q$ such that the derived subgroup $P$ of $G$ is a Sylow $p$-subgroup of $G$ and $G/P$ is a cyclic $q$-group. Moreover, if $Q$ is a Sylow $q$-subgroup of $G$, then $Q/C_Q(P)~$ has order $q$.
 	\end{theorem}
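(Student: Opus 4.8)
The plan is to prove the statement in two stages: first that $G$ is solvable (this is where the classification of finite simple groups enters), and then the detailed structure, by induction on $|G|$.

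\textbf{Reduction and solvability.} A useful preliminary remark is that $G/\Phi(G)$ is again $d$-maximal: every proper subgroup of $G/\Phi(G)$ has the form $H/\Phi(G)$ with $\Phi(G)\le H<G$, so $d(H/\Phi(G))\le d(H)\le d-1$, while $d(G/\Phi(G))=d(G)=d$. Hence for the solvability statement we may assume $\Phi(G)=1$. If $G$ were non-solvable, the socle $\operatorname{Soc}(G)$ would contain a non-abelian minimal normal subgroup $N\cong S^n$, with $S$ a finite non-abelian simple group and $C_G(N)\cap N=1$, so that $S^n\cong\mathrm{Inn}(N)\trianglelefteq G/C_G(N)\le\Aut(S)\wr\Sym(n)$. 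Using the classification — together with the elementary fact that passing to a quotient never increases the minimal number of generators — one then produces a proper subgroup $H<G$ with $d(H)\ge d(G)$, contradicting $d$-maximality. In effect this amounts to the self-contained claim that \emph{every finite non-solvable group has a proper subgroup whose minimal number of generators is at least that of the whole group}; establishing this through the subgroup structure of almost simple groups is the main obstacle of the first stage.

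\textbf{The solvable case: peeling off minimal normal subgroups.} Assume now $G$ is solvable, non-nilpotent and $d$-maximal, and argue by induction on $|G|$; since a $p$-group is nilpotent, $|G|$ has at least two prime divisors. The main tool is the standard inequality $d(G)-1\le d(G/N)\le d(G)$ for a minimal normal subgroup $N$ — necessarily an elementary abelian $p$-group and an irreducible $\F_p[G]$-module: if $N\le\Phi(G)$ then $d(G/N)=d(G)$, and otherwise $N$ is complemented, $G=N\rtimes L$, and a generating $d(G/N)$-tuple of $L$ can be completed to a generating tuple of $G$ by adjoining at most one element of $N$ (using that $N$, being irreducible, is a cyclic module). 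If some minimal normal $N$ satisfies $d(G/N)=d$, then $G/N$ is again $d$-maximal (its proper subgroups are $M/N$ with $M<G$, so $d(M/N)\le d(M)\le d-1$); if $G/N$ is non-nilpotent I conclude by induction and transfer the conclusion back through $N$, while if $G/N$ is nilpotent then it is an $\ell$-group with $\ell\ne p$ (else $G$ would be nilpotent), $N$ is the normal Sylow $p$-subgroup of $G$, and $G=N\rtimes(G/N)$ with $G/N$ a $d$-maximal $\ell$-group — a configuration from which the claimed structure follows directly. There remains the base case: \emph{every} minimal normal subgroup $N$ of $G$ satisfies $d(G/N)=d-1$.

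\textbf{The base case.} Here no minimal normal subgroup lies in $\Phi(G)$ (that would give $d(G/N)=d$), so $\Phi(G)=1$; hence $F(G)=\operatorname{Soc}(G)=N_1\times\cdots\times N_t$ with each $N_i$ a complemented elementary abelian minimal normal subgroup and $C_G(F(G))=F(G)$, and (as $G$ is non-nilpotent) some $N$ has $C_G(N)<G$, so $G/C_G(N)$ acts faithfully and irreducibly on $N\cong\F_p^k$. The heart of the matter is to combine, for a complement $G=N\rtimes L$, the equality $d(G)=d(L)+1$ with $d$-maximality applied to the proper subgroups $\operatorname{Soc}(G)$, to $W\rtimes L$ for $L$-submodules $W$, and to $\operatorname{Soc}(G)\rtimes L_0$ for proper subgroups $L_0<L$, all read through the formula for the minimal number of generators of a finite solvable group in terms of its complemented chief factors and their first cohomology. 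This rigidifies the picture: $\operatorname{Soc}(G)$ must be homogeneous and a $p$-group; the bound $d(\operatorname{Soc}(G))\le d-1$ forces the underlying irreducible to be one-dimensional over $\F_p$, i.e.\ $L$ acts on it \emph{through a character} (the phrase in the paper's title); and $L$ is forced to be cyclic, acting through a quotient of prime order $q\ne p$. Then with $P:=\operatorname{Soc}(G)$ one has: $G/P\cong L$ is abelian, so $G'\le P$; $L$ acts on $P$ with no nonzero fixed points, so $[P,L]=P\le G'$, whence $P=G'$; $G/P\cong L$ is a $q$-group, so $P$ is the Sylow $p$-subgroup of $G$; and $Q:=L$ is a cyclic $q$-group with $Q/C_Q(P)$ of order $q$. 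Transferring this back along the induction (using that, by the inductive hypothesis, each quotient $G/N$ involves only two primes and the intervening Frattini layers are $p$-groups) yields the statement for the original $G$. The two genuinely delicate points are (i) the classification-based step in the non-solvable case and (ii), in the base case, extracting ``character action and cyclic complement'' from the sole equality $d(G)=d(G/N)+1$ together with $d$-maximality, which rests on the precise generation formula for solvable groups.
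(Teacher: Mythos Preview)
Your outline takes a genuinely different route from the paper, and the difference is not cosmetic. The paper does \emph{not} first prove solvability and then run an induction over minimal normal subgroups. Instead it invokes a single external result (Lucchini, \emph{Bull.\ Lond.\ Math.\ Soc.}\ 1997): for any finite group $G$, if $D=\max_{S\in\Syl(G)}\dG(S)$ then $\dG(G)\le D+1$, and in the boundary case $\dG(G)=D+1$ there is already a quotient of $G$ of the form $(\F_p)^{D}\rtimes\langle\alpha\rangle$ with $\alpha$ acting by a non-trivial scalar. For a non-nilpotent $d$-maximal $G$ each Sylow subgroup is proper, so $D\le d-1$, forcing $D=d-1$ and putting us in the boundary case. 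After passing to $G/\Phi(G)$ one shows the kernel of the map onto that quotient is trivial (otherwise a complement to it would be a proper subgroup needing $d$ generators). This gives the Frattini-free structure in one stroke; a Schur--Zassenhaus argument then pins down that $|G|$ has only the two primes, that the Sylow $q$-subgroup is cyclic, and that $\alpha^q$ centralises $P$ (via the coprime fixed-point lemma). No induction on $|G|$, no cohomological generation formula, no case analysis on the socle.

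Against this, your proposal has a genuine gap at its most important step. You reduce solvability to the assertion that \emph{every finite non-solvable group has a proper subgroup $H$ with $\dG(H)\ge\dG(G)$}, and then explicitly concede that ``establishing this through the subgroup structure of almost simple groups is the main obstacle''. That is precisely the content packaged inside the cited Lucchini theorem, and there is no known elementary route to it; you have not supplied one, and the sketch via $\mathrm{Inn}(N)\trianglelefteq G/C_G(N)\le\Aut(S)\wr\Sym(n)$ does not by itself produce such a subgroup. So as written the solvability step is asserted, not proved. Secondary issues: in your inductive branch with $\dG(G/N)=d$ you need $N\le\Phi(G)$ (else a complement contradicts $d$-maximality) and then a Schur--Zassenhaus/Frattini argument to force the prime of $N$ to lie in $\{p,q\}$ and to keep the Sylow $q$-subgroup cyclic---you allude to ``intervening Frattini layers are $p$-groups'' but do not justify it; and in the base case the appeal to ``the formula for the minimal number of generators of a finite solvable group'' is doing a lot of unexplained work (why homogeneous socle? why one-dimensional constituents? why $L$ cyclic of prime-power order?). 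Each of these can likely be repaired, but the clean fix is exactly the Lucchini bound the paper uses.
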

  
In light of the previous result, it is natural to investigate the structure of finite $p$-groups that can occur as the derived subgroup of a non-nilpotent $d$-maximal group. 
We recall that a power automorphism of a finite group is an automorphism sending every subgroup to itself. If $G$ is an elementary abelian $p$-group, then a power automorphism  of $G$ is just scalar multiplication by some element of $\F_p^\times$.

\begin{definition} \label{defPair}
Let $p$ and $q$ be prime numbers. A \emph{maximal $(p,q)$-pair of rank $d$} is a pair $(P,\alpha)$ where
	$P$ is a finite $p$-group, $\alpha \in \Aut(P)$ has prime order $q$ dividing $p-1$, and the following properties are satisfied:
	\begin{enumerate}[label=$(\alph*)$]
		\item the minimal number of generators of every subgroup of $P$ is at most $\dG(P)=d$; \label{it:a}
		\item the image of $\alpha$ in $\Aut(P/\Phi(P))$ is a non-trivial power automorphism; \label{it:b}
		\item if $H$ is a proper subgroup of $P$ with $\dG(H)=\dG(P)$, then either $\alpha(H)\neq H$  or the image of $\alpha$ in $\Aut(H/\Phi(H))$ is not a non-trivial power automorphism. \label{it:c}
	\end{enumerate}
\end{definition}

We reformulate \cref{main} in terms of maximal pairs.

\begin{theorem} \label{thChar}
	A finite group $G$ is $d$-maximal if and only if one of the following occurs:
	\begin{enumerate}[label=$(\arabic*)$]
		\item the group $G$ is a $d$-maximal $p$-group;
		\item there exist a maximal $(p,q)$-pair $(P,\alpha)$ of rank $d-1$ and a cyclic $q$-group $\langle \beta \rangle$ 
		such that $G$ is isomorphic to $P\rtimes \langle \beta \rangle$ and, for every $x\in P$, one has $\alpha(x)=\beta(x)$. 
	\end{enumerate}
\end{theorem}

The Miller and Moreno classification of minimal non-cyclic groups can be essentially reformulated to saying that, if $(P,\alpha)$ is a maximal $(p,q)$-pair of rank $1,$ then $P$ has order $p.$ In \cref{sec5} we classify the maximal $(p,q)$-pairs $(P,\alpha)$ of rank 2, proving in particular that either $P$ has exponent $p$ and order at most $p^3$ or $(p,q)=(3,2),$ in which case there is a unique exceptional example with $P$ of order 81 and class 3. This result, combined with a recent classification of the $3$-maximal $p$-groups \cite{AZG22}, allows us to give in \cref{full3max} the full classification of the finite $3$-maximal groups. In \cref{sec6} we classify the maximal $(p,q)$-pairs $(P,\alpha)$ of rank 3: in this case $P$ has class at most 3 and order at most $p^6$, and if $|P|=p^6$, then $(p,q)=(3,2)$.

The behaviour of maximal pairs of small rank suggests the  following question.

\begin{question} Does there exist a function $f:\mathbb N\to \mathbb N$ with the property that
$|P|\leq p^{f(d)},$ whenever $(P,\alpha)$ is a maximal $(p,q)$-pair of rank $d?$
\end{question}
It follows from \cref{lem:basic1,lem:corba} that, if $(P,\alpha)$ is a maximal $(p,q)$-pair of rank $d$ and $P$ has class at most $c$, then $|P|\leq p^{cd}$; the previous question is thus equivalent to the following one:

\begin{question}
Does there exist a function $g:\mathbb N\to \mathbb N$ with the property that
$P$ has class at most $g(d),$ whenever $(P,\alpha)$ is a maximal pair of rank $d?$
\end{question}

We are not aware of examples of maximal pairs $(P,\alpha)$ with $P$ of class greater than $3$. This motivates the next problem:

\begin{question}
Is it possible to construct maximal pairs $(P,\alpha)$ with $P$ of arbitrarily large nilpotency class? 
\end{question}

The following is our main contribution to the solution of the previous questions. It implies, in particular, that the derived length of a $d$-maximal group of odd order is at most $3$.
 
	\begin{theorem}\label{th:q=2}
 Let $(P,\alpha)$ be a maximal $(p,q)$-pair.
 If $q>2$, then $P$ has class at most $2$.
	\end{theorem}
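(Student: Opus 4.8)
The plan is to pass to the split extension $G = P \rtimes \langle\alpha\rangle$ with $\langle\alpha\rangle$ of order $q$; by the reformulation of \cref{main}, $G$ is then $(d+1)$-maximal, where $d := \dG(P)$, so that every proper subgroup of $G$ is generated by at most $d$ elements — equivalently, by condition $(c)$, no proper $\alpha$-invariant subgroup $H < P$ with $\dG(H) = d$ can have $\alpha$ acting as a nontrivial scalar on $H/\Phi(H)$. The first step is to upgrade hypothesis $(b)$ from $P/\Phi(P)$ to the full abelianization $V := P/\gamma_2(P)$. Since $\langle\alpha\rangle$ is a $q$-group acting coprimely on the abelian $p$-group $V$ and $q \mid p-1$, the relevant $\Z/p^k\Z$ contains $q$ distinct $q$-th roots of unity, so $V$ is the direct sum of its $\alpha$-eigenspaces $V = \bigoplus_j V_j$ with $\alpha$ acting as $\lambda^{\,j}$ on $V_j$ for a fixed $\lambda \in \F_p$ of multiplicative order $q$; reducing modulo $p$ and using $(b)$ forces $V_j = pV_j$, hence $V_j = 0$, for $j \neq 1$, so $\alpha$ acts on $V$ as the single scalar $\lambda$.

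Next I would propagate this along the lower central series: the commutator induces a surjective bilinear map $\gamma_i(P)/\gamma_{i+1}(P) \times V \to \gamma_{i+1}(P)/\gamma_{i+2}(P)$, so a straightforward induction gives that $\alpha$ acts on $\gamma_i(P)/\gamma_{i+1}(P)$ as the scalar $\lambda^{\,i}$. The crucial consequence of $q > 2$ appears already here: the scalar $\lambda^2$ is nontrivial, so $\alpha$ acts fixed-point-freely on $\gamma_2(P)/\gamma_3(P)$, whereas for $q = 2$ this quotient is $\alpha$-fixed — this is the structural difference that permits the exceptional class-$3$ example at $(p,q) = (3,2)$ and should forbid its analogues for $q \geq 3$.

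Suppose, for a contradiction, that $P$ has class $c \geq 3$. Since $\gamma_4(P) \leq \Phi(P)$, the quotient $P/\gamma_4(P)$ with the induced automorphism is again a maximal $(p,q)$-pair of the same rank $d$: the inheritance of $(a)$ and $(b)$ is clear, and $(c)$ is inherited because any proper subgroup $\bar H$ of $P/\gamma_4(P)$ with $\dG(\bar H) = d$ lifts to a proper subgroup $H$ of $P$ with $\dG(H) = d$ and $\gamma_4(P) \leq \Phi(H)$, so that $\bar H/\Phi(\bar H) \cong H/\Phi(H)$. Hence we may assume $P$ itself has class exactly $3$ (the case of class $2$ being the desired conclusion). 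Then $\gamma_2(P) = P'$ is abelian, and $(a)$ forces it to have rank at most $d$ and, more importantly, imposes strong restrictions on $P/\gamma_2(P)$ as a group of operators on $\gamma_2(P)$: testing $(a)$ on subgroups of the form $\langle x\rangle\gamma_2(P)$ and on $C_P(\gamma_2(P))$ gives lower bounds on the commutator images $[x,\gamma_2(P)]$ and forces $\gamma_2(P)$ to be close to self-centralizing. Combining these constraints with the $\alpha$-eigenspace decomposition $\gamma_2(P) = \big(\gamma_2(P)/\gamma_3(P)\big) \oplus \gamma_3(P)$ into the eigenvalues $\lambda^2$ and $\lambda^3$ — the first genuinely nontrivial precisely because $q > 2$ — and with condition $(c)$, one should be driven to a contradiction, producing either a subgroup needing more than $d$ generators or a proper $\alpha$-invariant subgroup with $\dG(\cdot) = d$ on whose Frattini quotient $\alpha$ acts as a single nontrivial scalar.

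The main obstacle is exactly this last class-$3$ analysis: extracting a contradiction from the interaction of the action of the rank-$\leq d$ group $P/\gamma_2(P)$ on the abelian group $\gamma_2(P)$, the $\alpha$-eigenvalue grading, and the constraint that no subgroup of $P$ needs more than $d$ generators. It is delicate because, as the $(p,q) = (3,2)$ example shows, the statement is false without $q > 2$, so the argument must use essentially that $\alpha$ does not fix $\gamma_2(P)/\gamma_3(P)$. I expect the write-up to treat $q = 3$ (where $\lambda^3 = 1$, so $\gamma_3(P)$ is $\alpha$-fixed and lies in the centre of $G$) somewhat separately from $q \geq 5$ (where $\alpha$ is fixed-point-free on $P$, hence of class less than $q$ by the eigenvalue argument, leaving only a bounded range of classes to exclude by hand).
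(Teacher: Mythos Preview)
Your setup is sound --- the reduction to class exactly $3$, the observation that $\alpha$ acts as $\lambda^{i}$ on $\gamma_i(P)/\gamma_{i+1}(P)$, and the recognition that $\lambda^{2}\neq 1$ is where $q>2$ enters --- but the proof is incomplete: you explicitly flag the class-$3$ endgame as the ``main obstacle'' and do not resolve it. The commutator/eigenvalue bookkeeping you outline does not by itself produce a contradiction; testing property~\ref{it:a} on $\langle x\rangle\gamma_2(P)$ or on $\Cyc_P(\gamma_2(P))$ gives constraints, but you have not shown how these combine with \ref{it:c} to force anything. Your expected case split $q=3$ versus $q\geq 5$ is also a red herring: the paper's argument is uniform in $q$.

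The missing ingredient is the $p$-th power map. The paper first proves (\cref{prop:no-exp-p}) that a class-$3$ group in a maximal pair cannot have exponent $p$; this uses a size computation on $\Cyc_P(\gamma_2(P))$ together with property~\ref{it:a} and the character grading. Then, since one may take $p>3$ (else $q\mid p-1$ already gives $q=2$), the group $P$ is regular, and one shows (\cref{lem:q-mod}, \cref{lem:reg-i+q}) that $p$-th powering an element of weight $i$ lands in weight at least $i+q$: the $p$-th power map is $\alpha$-equivariant, so the weight can only jump by a multiple of $q$, and it must strictly increase because each $\gamma_i/\gamma_{i+1}$ has exponent $p$. Putting these together, $\{1\}\neq \mho_1(P)\subseteq\gamma_{q+1}(P)$, and class $3$ forces $q+1\leq 3$, i.e.\ $q=2$. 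None of this appears in your outline; the argument does not proceed by locating a bad subgroup violating \ref{it:a} or \ref{it:c} directly, but by squeezing $\mho_1(P)$ between $\{1\}$ and $\gamma_{q+1}(P)$.
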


The proof of \cref{th:q=2} is given in \cref{sec:reg}, and involves results on maximal pairs $(P,\alpha)$ where $P$ is regular (the definition of regularity is given in Section \ref{subsecRegularity}). The class of regular $p$-groups is not only easier to study, but also a reasonable family to restrict to. Indeed, as soon as $p\geq 2d$ and $(P,\alpha)$ is a maximal pair of rank $d$, the group $P$ is regular (see \cref{lem:reg}). \\

\noindent
\textbf{Notation.}
We use standard group theory notation and write
\begin{itemize}
    \item $\ZG(G)$ for the center of $G$,
    \item $\Phi(G)$ for the Frattini subgroup of $G$,
    \item $(\gamma_i(G))_{i\geq 1}$ for the lower central series of $G$.
\end{itemize}
If $p$ is a prime number, $n$ a non-negative  integer, and $P$ a finite $p$-group, we write $\Omega_n(P)$ and $\mho_n(P)$ for the following subgroups:
\[
\Omega_n(P)=\gen{x\in P\mid x^{p^n}=1} \ \ \textup{ and } \ \ \mho_n(G)=\gen{x^{p^n} \mid x\in G}.
\]

\smallskip
\noindent
\textbf{Acknowledgements.} 
The second author has received funding from the European Research Council (ERC) under the European Union's Horizon 2020 research and innovation program (grant agreement No 741420).
The third author was funded by the Italian program Rita Levi Montalcini for young researchers, Edition 2020. We are thankful to the anonymous referees for their comments, which helped improve the exposition of this paper.

\section{Maximal groups and maximal pairs}

 In this section we translate the problem of classifying $d$-maximal groups into that of classifying maximal pairs, as defined in the Introduction.
	
\subsection{The structure of $d$-maximal groups}

The following theorem is \cite[Cor.~4]{Luc97}.
Its proof uses several different properties of the finite simple groups and requires their classification.
	
	\begin{theorem}\label{thLuc}
	Let $G$ be a finite group. Let $D = \max_{S \in \Syl(G)} \dG(S)$,
 where $S$ runs among the Sylow subgroups of $G$.
	 Then $\dG(G) \leq D+1$.
	 If $\dG(G)=D+1$, then there exists an odd prime $p$ and a quotient of $G$ isomorphic to
	 a semidirect product of an elementary abelian $p$-group $P$ of rank $D$ with a cyclic group $\langle \alpha \rangle$,
	 where $\alpha$ acts on $P$ as a non-trivial power automorphism.
	\end{theorem}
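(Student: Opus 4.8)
The plan is to reduce to the Frattini-free case and then run through the chief factors of $G$ crown by crown, bounding the contribution of each crown to $\dG(G)$ by $D+1$; abelian and nonabelian chief factors are treated by genuinely different arguments, and the classification of finite simple groups enters only through the nonabelian ones, which I expect to be the main obstacle. For the reduction, I would note that a generating set of $G/\Phi(G)$ lifts to one of $G$ (non-generators can be discarded), so $\dG(G)=\dG(G/\Phi(G))$ by Gaschütz, and that every Sylow subgroup of $G/\Phi(G)$ has the form $S\Phi(G)/\Phi(G)\cong S/(S\cap\Phi(G))$ with $S\in\Syl(G)$, hence is an epimorphic image of $S$; thus $D$ does not grow when passing to $G/\Phi(G)$. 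Both statements therefore reduce to the case $\Phi(G)=1$: the inequality transfers verbatim, and if $\dG(G)=D+1$ then necessarily $\max_{\bar S\in\Syl(G/\Phi(G))}\dG(\bar S)=D$ and the extremal quotient built for $G/\Phi(G)$ is a quotient of $G$ as well. So I assume $\Phi(G)=1$ from now on.

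Next I would organise the chief factors of $G$ into crowns, one per $G$-isomorphism class $V$ of chief factor, writing $\delta_V$ for its multiplicity (the number of complemented chief factors $G$-isomorphic to $V$ in a chief series, i.e.\ the number of direct factors of the $V$-homogeneous part of the corresponding socle layer). Proceeding by induction on $|G|$ along a chief series and using Gaschütz's generation theory in its crown-based form (Lucchini, Menegazzo), one sees that $\dG(G)$ is the maximum, over the crowns, of an explicit quantity assembled from $\delta_V$, from $\End_G(V)$, from the first cohomology of the monolithic primitive quotient attached to $V$ with coefficients in $V$, and from a base term coming from $\dG$ of a smaller quotient. The task is then to bound each such quantity by $D+1$.

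For a nonabelian chief factor $V\cong S^{r}$, with $S$ simple, one has to show the crown contributes at most $D$, never $D+1$, and this is exactly where the classification of finite simple groups is indispensable: using that every finite simple group is $2$-generated, together with the sharper Eulerian and probabilistic generation estimates for simple groups (Aschbacher--Guralnick, Lucchini and others), one shows such a crown never forces more generators than its own Sylow subgroups already require, so its contribution is at most $D$. Assembling and applying these classification-dependent estimates uniformly across all types of simple groups is, I expect, the heart of the argument, and the reason the theorem cannot currently be proved without CFSG. The upshot is that if $\dG(G)=D+1$, the extra generator must be forced by an \emph{abelian} crown.

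Finally, let $V$ be an abelian chief factor: an irreducible $\F_p[G]$-module on which $L:=G/C_G(V)$ acts faithfully and irreducibly, with crown $N\cong V^{\delta_V}$, an elementary abelian normal $p$-section of $G$. Analysing the Gaschütz expression directly — controlling $\dG$ of the smaller quotient by induction, and the crown term by the module structure of $N$ together with the fact that $N$ sits inside a Sylow $p$-subgroup of the relevant quotient, itself an epimorphic image of a Sylow $p$-subgroup of $G$ and hence needing at most $D$ generators — one gets the bound $D+1$, with a single extra generator the only slack ever allowed. Pinning down equality forces the configuration: $V$ must be $1$-dimensional over $\F_p$, so $V\cong\F_p$ as a group and $L\le\F_p^\times$ is cyclic, acting on $V$ by a scalar $\lambda$; moreover $\lambda\ne1$, since a trivial (central) $\F_p$-chief factor is absorbed into the generation of the Sylow $p$-subgroup and contributes only $D$, whereas it is precisely the fixed-point-free scalar action that wastes a generator; and $\delta_V=D$. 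Reading this crown off $G$ produces a quotient isomorphic to $\F_p^{D}\rtimes\langle\alpha\rangle$ with $\langle\alpha\rangle\cong L$ cyclic and $\alpha$ acting by $\lambda\ne1$, i.e.\ as a non-trivial power automorphism of an elementary abelian $p$-group of rank $D$; and $p$ is odd, because $\F_2^\times$ is trivial, so no non-trivial scalar exists when $p=2$. For sharpness I would add the easy converse: in $\F_p^{D}\rtimes\langle\alpha\rangle$ a Nielsen transformation lets one assume exactly one generator $g$ maps onto a generator of $\langle\alpha\rangle$ while the rest lie in $\F_p^{D}$; since the scalar action makes $g$ normalise every subspace and $\langle g\rangle\cap\F_p^{D}=1$ (as $1+\lambda+\cdots+\lambda^{\operatorname{ord}(\lambda)-1}=0$), the group generated is $\langle g_2,\dots,g_m\rangle\rtimes\langle\alpha\rangle$, forcing $\langle g_2,\dots,g_m\rangle=\F_p^{D}$ and hence at least $D+1$ generators. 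Combining the abelian and nonabelian bounds gives $\dG(G)\le D+1$ in general, with equality exactly when $G$ has a quotient of the stated form.
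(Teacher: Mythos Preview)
The paper does not give a proof of this statement: it is quoted as \cite[Cor.~4]{Luc97}, accompanied only by the remark that the proof ``uses several different properties of the finite simple groups and requires their classification.'' There is therefore no in-paper argument to compare your proposal against.

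That said, your outline is the strategy of the cited reference: pass to $G/\Phi(G)$, use the crown-based generation formula to express $\dG(G)$ as a maximum over $G$-equivalence classes of chief factors, bound the nonabelian contributions by $D$ via CFSG-dependent generation results for simple groups, and show that an abelian crown can reach $D+1$ only when the module is one-dimensional over $\F_p$ with nontrivial scalar action and multiplicity exactly $D$. Your sketch is sound at the level of architecture, and the places where genuine work is hidden are precisely the two you flag (the uniform nonabelian bound across all simple groups, and the sharp abelian analysis forcing $\dim_{\F_p}V=1$ rather than a higher-dimensional module with smaller multiplicity). One minor overclaim: your closing ``equality exactly when'' asserts a biconditional, whereas the theorem as stated, and as used in the paper, only gives and only needs the forward implication; the converse is true but is not part of the statement being proved.
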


The next result describes the $d$-maximal groups with trivial Frattini subgroup. Note that, in the second case of \cref{propFF}, the subgroup $A$ will necessarily act on the elementary abelian group $P$ by scalar multiplication  by elements of $\F_p^\times$ and therefore its order $q$ will have to divide $p-1$, yielding, in particular, that $p$ is odd.
	
	\begin{proposition} \label{propFF}
	Let $G$ be a $d$-maximal finite group such that $\Phi(G)=1$. Then there exists a prime number $p$ such that one of the following holds:
	\begin{enumerate}[label=$(\arabic*)$]
	\item The group $G$ is an elementary abelian $p$-group of rank $d$;
	\item The group $G$ is isomorphic to a semidirect product $P\rtimes A$, where $P$ is an elementary abelian $p$-group of rank $d-1$ and $A$ is a central prime-order subgroup of $\GL_{d-1}(\F_p)$. 
	\end{enumerate}
	\end{proposition}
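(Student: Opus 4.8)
The plan is to dispose of the nilpotent case first and, in the non-nilpotent case, to feed $G$ into \cref{thLuc} and then show that the normal subgroup it produces is trivial.

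If $G$ is nilpotent it is a $p$-group (a nilpotent $d$-maximal group must be), and a $p$-group with trivial Frattini subgroup is elementary abelian, necessarily of rank $\dG(G)=d$; this is case $(1)$. So assume $G$ is not nilpotent and put $D=\max_{S\in\Syl(G)}\dG(S)$. I would first note $\dG(G)=D+1$: by \cref{thLuc} we have $\dG(G)\le D+1$, and if $\dG(G)\le D$ then a Sylow subgroup $S$ with $\dG(S)=D$ is a \emph{proper} subgroup of $G$ (proper because $G$ is not a $p$-group) with $\dG(S)\ge\dG(G)=d$, against $d$-maximality. Hence $\dG(G)=D+1$, so $D=d-1$, and \cref{thLuc} yields an odd prime $p$, a normal subgroup $N\trianglelefteq G$, and $G/N\cong V\rtimes\langle\alpha\rangle$ with $V$ elementary abelian of rank $d-1$ and $\alpha$ acting on $V$ as a non-trivial scalar $\lambda\in\F_p^{\times}$. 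Replacing $N$ by the preimage of the kernel of $\langle\alpha\rangle\to\Aut(V)$ — which centralizes both $V$ and $\langle\alpha\rangle$, hence is normal in $G/N$ — I may assume this action is faithful, so $\langle\alpha\rangle$ is cyclic of order $n\mid p-1$ acting on $V$ by the scalar $\lambda$ (now of order $n$).

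The rest is short. A direct generator count gives $\dG(V\rtimes\langle\alpha\rangle)=\dim_{\F_p}V+1=d$: in a generating set one may assume some generator has $\langle\alpha\rangle$-part generating $\langle\alpha\rangle$, and cancelling the $\langle\alpha\rangle$-components against it yields at most $\dim V-1$ independent vectors of $V$, so $\dim V$ elements cannot suffice. Now suppose $N\neq1$. Since $\Phi(G)$ is the intersection of the maximal subgroups of $G$ and equals $1$, some maximal subgroup $M$ does not contain $N$; then $MN=G$, so $G/N\cong M/(M\cap N)$ is a quotient of $M$, whence $\dG(M)\ge\dG(G/N)=d$, contradicting $\dG(M)\le d-1$. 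Therefore $N=1$ and $G\cong V\rtimes\langle\alpha\rangle$. Finally $n$ is prime: if $\ell$ were a prime divisor of a composite $n$, then $V\rtimes\langle\alpha^\ell\rangle$ is a proper maximal subgroup of $G$ (of index $\ell$) on which $\langle\alpha^\ell\rangle$ still acts by a non-trivial scalar, so $\dG(V\rtimes\langle\alpha^\ell\rangle)=\dim V+1=d$, again against $d$-maximality. Hence $n=:q$ is prime, $q\mid p-1$ (so $p$ is odd), and $G\cong P\rtimes A$ with $P$ elementary abelian of rank $d-1$ and $A=\langle\alpha\rangle$ identified with the central prime-order subgroup $\langle\lambda\Id_{d-1}\rangle$ of $\GL_{d-1}(\F_p)$; this is case $(2)$.

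The substantive input is \cref{thLuc} (which rests on the classification of finite simple groups); granting it, the points to watch are the identity $\dG(V\rtimes\langle\alpha\rangle)=\dim V+1$ for scalar actions — which powers both the step $N=1$ and the primality of $n$ — and the small but necessary trimming of the kernel of the action of $\langle\alpha\rangle$, without which the quotient supplied by \cref{thLuc} need not literally have the shape required in case $(2)$.
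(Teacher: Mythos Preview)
Your proof is correct and follows essentially the same route as the paper's: invoke \cref{thLuc}, use $\Phi(G)=1$ to force $N=1$ via a maximal subgroup not containing $N$, and then pin down $|\alpha|$ as prime. The only cosmetic difference is the order of operations: you enlarge $N$ first so that $\alpha$ acts faithfully and then prove $N=1$ in one stroke, whereas the paper proves $N=1$ for the original $N$ and only afterwards eliminates $\alpha^q$ by observing that it acts trivially and hence lies in $\Phi(G)=1$.
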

	\begin{proof} 
	If $G$ is nilpotent, then $G$ is a direct product of elementary abelian groups and (1) follows easily from $d$-maximality.
	Let $D = \max_{S \in \Syl(G)} \dG(S)$ and observe that $D < d$.
	From Theorem \ref{thLuc}, we obtain a normal subgroup $N$ of $G$ such that $G/N$ is isomorphic to a semidirect product $P\rtimes \gen{\alpha}$ where $P$ is elementary abelian of rank $d-1$ and $\alpha$ acts on $P$ as a non-trivial power automorphism. In particular, $p \neq 2$ and $\dG(G/N)=d$.
	 We claim that $N=1$. If this were not the case, since $\Phi(G)=1$, there would exist a maximal subgroup $M$ of $G$ such that $G=MN$ and thus
	$$ \dG(M) \geq \dG (M/(M\cap N)) = \dG(G/N) = d , $$
	which is impossible.
    Let $\sigma \in \langle \alpha \rangle$.
	If $\sigma$ acts non-trivially on $P$,
	then $\dG(P \rtimes \langle \sigma \rangle)=d$,
	which gives $\langle \sigma \rangle=\langle \alpha \rangle$.
	Since some Sylow subgroup of $\langle \alpha \rangle$ must act non-trivially on $P$, we conclude that $\alpha$ has prime-power order, say $q^t$.
	Moreover, $\alpha^q$ must act trivially on $P$.
	Since $\alpha^q \in \Phi(\langle \alpha \rangle)$, we conclude that $\alpha^q \in \Phi(G)=1$. The proof is complete.
	\end{proof}

	The following two results are well known. 
	The first is a direct consequence of the Schur-Zassenhaus Theorem (see \cite[9.3.5]{Robinson}), while the second is \cite[Thm.~1.6.2]{Khukhro}.
	
	\begin{lemma} \label{lemFrat}
	Let $G$ be a finite group. If $p$ divides $|G|$, then $p$ divides $|G:\Phi(G)|$.
	\end{lemma}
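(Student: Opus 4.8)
The plan is to argue by contradiction, using only the two defining features of the Frattini subgroup: it is normal in $G$, and it consists of non-generators. So I would suppose that $p$ divides $|G|$ but that $p$ does not divide $|G:\Phi(G)|$. Since $|G|=|\Phi(G)|\cdot|G:\Phi(G)|$, this hypothesis forces $|G|_p=|\Phi(G)|_p$; hence a Sylow $p$-subgroup $P$ of the normal subgroup $\Phi(G)$ has order $|G|_p$ and is therefore a Sylow $p$-subgroup of $G$ as well, and it is nontrivial because $p$ divides $|G|$. The whole role of this opening bookkeeping is the observation that $p\nmid|G:\Phi(G)|$ is exactly the condition allowing one to place an entire Sylow $p$-subgroup of $G$ inside $\Phi(G)$, which is what sets the argument in motion.

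The first main step is to upgrade $P$ to a \emph{normal} subgroup of $G$. Since $P$ is a Sylow $p$-subgroup of $\Phi(G)\trianglelefteq G$, the Frattini argument gives $G=\Phi(G)\,N_G(P)$; and because every element of $\Phi(G)$ is a non-generator, this forces $G=N_G(P)$, i.e.\ $P\trianglelefteq G$. The second main step then invokes the Schur--Zassenhaus Theorem: as $P$ is now a normal Sylow $p$-subgroup of $G$, one has $\gcd(|P|,|G:P|)=1$, so $P$ has a complement $H$ in $G$, i.e.\ $G=PH$ with $P\cap H=1$. Since $P\neq 1$, the subgroup $H$ is proper, hence $H\leq M$ for some maximal subgroup $M$ of $G$. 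But $\Phi(G)\leq M$ and therefore $P\leq M$ as well, so $G=PH\leq M$, contradicting the maximality of $M$. This contradiction proves the lemma.

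I do not expect a genuine obstacle here: both the Frattini argument and Schur--Zassenhaus are used in their textbook forms (and Schur--Zassenhaus is exactly the cited input), so nothing delicate needs verifying beyond the elementary order computation at the start. If one wished to avoid Schur--Zassenhaus entirely, an alternative would be to first note that $\Phi(G)$ is nilpotent (again via the Frattini argument) and split off its Hall $p'$-subgroup, but the argument above is shorter and matches the hint in the paper.
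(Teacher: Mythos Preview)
Your proof is correct and follows exactly the route the paper indicates: the paper does not spell out a proof but simply records the lemma as ``a direct consequence of the Schur--Zassenhaus Theorem,'' and your argument is precisely the standard unpacking of that remark (Frattini argument to make the Sylow $p$-subgroup normal, then Schur--Zassenhaus to produce a complement contradicting $P\subseteq\Phi(G)$).
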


    \begin{lemma} \label{lemKhukhro}
    Let $G$ be a finite group, $\varphi$ an automorphism of $G$, and $N$ a normal $\varphi$-invariant subgroup whose order is coprime to the order of $\varphi$. Then $\mathrm{C}_{G/N}(\varphi) = \mathrm{C}_G(\varphi)N/N$. 
    \end{lemma}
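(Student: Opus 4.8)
The plan is to prove the two inclusions separately. The inclusion $\mathrm{C}_G(\varphi)N/N\subseteq\mathrm{C}_{G/N}(\varphi)$ is immediate: if $\varphi(g)=g$, then $\varphi(gN)=\varphi(g)N=gN$. For the reverse inclusion, which is the substantial one, I would pass to the semidirect product $\Gamma=G\rtimes\langle\varphi\rangle$, where $\langle\varphi\rangle$ denotes the finite cyclic group generated by $\varphi$ acting on $G$ as prescribed, so that $\varphi g\varphi^{-1}=\varphi(g)$ for all $g\in G$ and $G\cap\langle\varphi\rangle=1$. In this language $N$ is normal in $\Gamma$ (being normal in $G$ and $\varphi$-invariant), and $|N|$ is coprime to $|\langle\varphi\rangle|$.

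Fix $gN\in\mathrm{C}_{G/N}(\varphi)$, which means $\varphi(g)g^{-1}\in N$, i.e. $g$ and $\varphi$ commute modulo $N$ inside $\Gamma$. Put $K=\langle N,g,\varphi\rangle\le\Gamma$. Since $gN$ and $\varphi N$ commute, $K/N=\langle gN,\varphi N\rangle$ is abelian, so $N\langle\varphi\rangle/N$ is central in $K/N$ and therefore $N\langle\varphi\rangle\trianglelefteq K$. Inside $N\langle\varphi\rangle$ the normal subgroup $N$ has the complement $\langle\varphi\rangle$ of coprime order; as $\langle\varphi\rangle$ is cyclic, the conjugacy part of the Schur--Zassenhaus theorem applies and shows that all complements to $N$ in $N\langle\varphi\rangle$ are conjugate by an element of $N$. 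Since $g\in K$ normalizes $N\langle\varphi\rangle$, the subgroup $g\langle\varphi\rangle g^{-1}$ is another such complement, so there is $n\in N$ with $(ng)\langle\varphi\rangle(ng)^{-1}=\langle\varphi\rangle$.

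Set $h=ng\in G$. Then $h\varphi h^{-1}\in\langle\varphi\rangle$; on the other hand a direct computation in the semidirect product gives $h\varphi h^{-1}=\bigl(h\varphi(h)^{-1}\bigr)\varphi$ with $h\varphi(h)^{-1}\in G$, and since $G\cap\langle\varphi\rangle=1$ the only element of $\langle\varphi\rangle$ lying in the coset $G\varphi$ is $\varphi$ itself. Hence $h\varphi(h)^{-1}=1$, that is $\varphi(h)=h$, so $h\in\mathrm{C}_G(\varphi)$, while $hN=(ng)N=gN$ because $n\in N\trianglelefteq G$. Thus $gN\in\mathrm{C}_G(\varphi)N/N$, and the reverse inclusion follows.

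I expect the main obstacle to be that $N$ is not assumed abelian: one therefore cannot simply read $g\mapsto\varphi(g)g^{-1}$ as a $1$-cocycle and quote the vanishing of $\mathrm{H}^1(\langle\varphi\rangle,N)$ under coprimality (which would otherwise produce $n\in N$ with $n^{-1}g\in\mathrm{C}_G(\varphi)$ directly), so the Schur--Zassenhaus detour above is what repairs this; an alternative repair would be an induction on $|N|$ through a minimal $\varphi$-invariant normal subgroup, reducing to the abelian case. The conjugacy part of Schur--Zassenhaus needed here is the elementary case, the complement $\langle\varphi\rangle$ being cyclic and hence solvable, and the remaining points are routine bookkeeping inside $\Gamma$: that $N\langle\varphi\rangle$ is genuinely normal in $K$, and the identity $h\varphi h^{-1}=\bigl(h\varphi(h)^{-1}\bigr)\varphi$.
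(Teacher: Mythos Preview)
Your argument is correct. The paper does not prove this lemma at all; it simply quotes it as \cite[Thm.~1.6.2]{Khukhro}, calling it well known. What you have written is the standard Schur--Zassenhaus proof of this fact, carried out cleanly inside the holomorph $\Gamma=G\rtimes\langle\varphi\rangle$: the key point, that $g\langle\varphi\rangle g^{-1}$ is another complement to $N$ in $N\langle\varphi\rangle$ and hence $N$-conjugate to $\langle\varphi\rangle$, is exactly the mechanism behind the cited reference. Your remark that only the solvable (in fact cyclic) case of the conjugacy part is needed is on point, so the proof is elementary and does not hide any appeal to Feit--Thompson.
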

	
	\begin{proposition} \label{propGen}
	Let $G$ be a $d$-maximal finite group and assume that $G$ is not a $p$-group. Then $G$ is isomorphic to a semidirect product $ P \rtimes \langle \alpha \rangle$, where $P$ is a $p$-group for some odd prime $p$,
	and $\alpha \in \Aut(P)$ has prime-power order $q^t$ for some $q$ dividing $p-1$.
	Moreover, $\dG(P)=d-1$, and $\alpha^q$ centralizes $P$. 
	\end{proposition}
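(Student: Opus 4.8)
The plan is to argue by induction on $|G|$, with \cref{propFF} handling the base case. As a first step, apply \cref{propFF} to $G/\Phi(G)$: since $G$ is not a $p$-group, \cref{lemFrat} shows that $G/\Phi(G)$ is not a $p$-group, so we land in case $(2)$, i.e.\ $G/\Phi(G)\cong C_p^{\,d-1}\rtimes\langle\bar\alpha\rangle$ with $p$ odd, $\bar\alpha$ of prime order $q$ dividing $p-1$, acting as a non-trivial scalar; in particular the primes dividing $|G|$ are exactly $p$ and $q$. If $\Phi(G)=1$ this is already the assertion, with $t=1$; so assume $\Phi(G)\neq 1$ and choose a minimal normal subgroup $N$ of $G$ with $N\le\Phi(G)$. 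Lying inside the nilpotent group $\Phi(G)$, $N$ is elementary abelian, hence a $p$-group or a $q$-group. The quotient $G/N$ is $d$-maximal (as $N\le\Phi(G)$) and not a $p$-group (otherwise $G/\Phi(G)$, a quotient of $G/N$, would be), so by induction $G/N\cong\tilde P\rtimes\langle\tilde\alpha\rangle$ with $\tilde P\trianglelefteq G/N$ a $p$-group, $\dG(\tilde P)=d-1$, $\langle\tilde\alpha\rangle$ cyclic of order $q^{s}$, and $\tilde\alpha^{q}$ centralizing $\tilde P$.

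Let $P^{*}\trianglelefteq G$ be the preimage of $\tilde P$. I first show $G$ has a normal Sylow $p$-subgroup. If $N$ is a $p$-group, then $P^{*}$ is itself a normal $p$-subgroup of $q$-power index, hence a normal Sylow $p$-subgroup. If $N$ is a $q$-group, then $N$ is the Sylow $q$-subgroup of $P^{*}$, so by Schur--Zassenhaus $P^{*}=N\rtimes P_0$ for some Sylow $p$-subgroup $P_0$ of $G$; the Frattini argument gives $G=P^{*}N_G(P_0)=N\,N_G(P_0)$, and since $N\le\Phi(G)$ this forces $N_G(P_0)=G$, i.e.\ $P_0\trianglelefteq G$. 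Denote the normal Sylow $p$-subgroup so obtained by $P$; by Schur--Zassenhaus $G=P\rtimes Q$ for a complement $Q$, which is a Sylow $q$-subgroup. Because $P$ is a proper subgroup of the $d$-maximal group $G$ and surjects onto $\tilde P$, we get $\dG(P)=d-1$. Using $\dG(P)=d-1$ together with $\Phi(P)\le\Phi(G)$, the module $P/\Phi(P)$ is $G$-isomorphic to the Sylow $p$-subgroup $C_p^{\,d-1}$ of $G/\Phi(G)$, on which $G$—hence also $Q=G/P$—acts by scalars forming a group of order exactly $q$; therefore $q\mid p-1$ and, as $C_Q(P)=C_Q(P/\Phi(P))$, the subgroup $C_Q(P)$ has index $q$ in $Q$. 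If $Q$ is cyclic, say $Q=\langle\alpha\rangle$, it follows that $C_Q(P)=\langle\alpha^{q}\rangle$, so $\alpha^{q}$ centralizes $P$; this gives all remaining assertions.

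It thus suffices to prove that $Q$ is cyclic. If $N$ is a $p$-group, then $Q\cong G/P\cong (G/N)/\tilde P\cong\langle\tilde\alpha\rangle$ is cyclic by induction. If $N$ is a $q$-group, note that $P$ centralizes $N$ (both being normal of coprime order), so the irreducible $\F_q[G]$-module $N$ is in fact an irreducible module over $\F_q[Q]$; as $Q$ is a $q$-group, this forces $N$ to be the trivial one-dimensional module. Hence $N$ is a central subgroup of $G$ of order $q$, and $Q$ is a central extension of the cyclic group $Q/N\cong\langle\tilde\alpha\rangle\cong C_{q^{s}}$ by $N\cong C_q$; such an extension is isomorphic to $C_{q^{s}}\times C_q$ or to $C_{q^{s+1}}$. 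The first possibility is excluded: a complement to $N$ inside $Q$ would, together with $P$, be a complement to the central subgroup $N$ inside $G$, contradicting $N\le\Phi(G)$. Therefore $Q\cong C_{q^{s+1}}$ is cyclic, completing the induction.

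The step I expect to be the main obstacle is the lifting through $N$ when $N$ is a $q$-group: first, keeping the Sylow $p$-subgroup normal, which is handled by the Frattini argument together with the hypothesis $N\le\Phi(G)$; and, above all, proving that the Sylow $q$-subgroup is cyclic, which is handled by forcing $N$ to be central of prime order and then ruling out the split central extension. Everything else—the reduction to $G/\Phi(G)$, the Schur--Zassenhaus splittings, the equality $\dG(P)=d-1$, the divisibility $q\mid p-1$, the oddness of $p$, and the fact that $\alpha^{q}$ centralizes $P$—follows from \cref{propFF,lemFrat} and standard coprime-action arguments (for instance, a $p'$-group of automorphisms of a $p$-group acting trivially on the Frattini quotient is trivial).
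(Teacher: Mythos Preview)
Your proof is correct, but it takes a longer route than the paper's. Both arguments begin by applying \cref{propFF} to $G/\Phi(G)$ and invoking \cref{lemFrat} to see that $|G|=p^nq^t$. From there the paper proceeds directly, with no induction: since the Sylow $p$-subgroup of $G/\Phi(G)$ is normal, the Frattini argument (applied once, to $P\Phi(G)\trianglelefteq G$) gives a normal Sylow $p$-subgroup $P\trianglelefteq G$, and then the cyclicity of $Q$ is obtained in one line by observing that $Q/\Phi(Q)\cong (G/P)/\Phi(G/P)$ is a quotient of the cyclic group $G/(P\Phi(G))\cong\langle\bar\alpha\rangle$. Finally, $\alpha^q$ acts trivially on $P/\Phi(P)$ by $d$-maximality, hence on $P$ by the coprime fixed-point lemma (\cref{lemKhukhro}). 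Your induction on $|G|$ with the $p$-group/$q$-group dichotomy for a minimal normal $N\le\Phi(G)$ recovers the same conclusions, and your handling of the $q$-case (forcing $N$ central of order $q$, then excluding the split extension $C_{q^s}\times C_q$ because a complement to $N$ in $Q$ would extend to a complement in $G$) is a nice self-contained argument; but it is doing work that the paper's Frattini-quotient trick for $Q$ avoids entirely. In short: same ingredients, but the paper packages the normality of $P$ and the cyclicity of $Q$ without induction or case analysis, which is worth noting as the more economical path.
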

	\begin{proof}
	Let $G$ be a non-nilpotent $d$-maximal group.
	Since $d=\dG(G)=\dG(G/\Phi(G))$, we have that $\overline{G}=G/\Phi(G)$ is $d$-maximal and Frattini-free. \cref{propFF} ensures the existence of an elementary abelian $p$-group $\overline{P}$ and $\overline{\alpha}\in\Aut(\overline{P})$ of prime order $q$ dividing $p-1$ such that $\overline{G}$ is isomorphic to the semidirect product $\overline{P}\rtimes\gen{\overline{\alpha}}$. As a consequence of Lemma \ref{lemFrat}, there exist integers $n \geq d-1$ and $t \geq 1$ such that $|G|=p^n q^t$.
	Since the Sylow $p$-subgroup $\overline{P}$ of $\overline{G}$ is normal, so is the Sylow $p$-subgroup $P$ of $G$.
	Therefore, $G$ can be written as a semidirect product $P \rtimes Q$, where $Q$ is a Sylow $q$-subgroup.
	Since $Q/\Phi(Q)$ is isomorphic to $(G/P)/\Phi(G/P)$, it follows from $(\Phi(G)P)/P \subseteq \Phi(G/P)$ that $Q/\Phi(Q)$ is a quotient of the cyclic group $G/(\Phi(G)P)$.
	So $Q=\langle \alpha \rangle$ for some $\alpha \in Q$, and $\dG(P)=d-1$.
	By $d$-maximality, $\alpha^q$ induces the identity on $P/\Phi(P)$. From Lemma \ref{lemKhukhro} we conclude that $\alpha^q$ must act trivially on the whole of $P$.
	\end{proof}

 \begin{remark}
 Let $D= \max_{S \in \Syl(G)} \dG(S)$.
 The inequality $d(G)>D$ plays a crucial role in our proof that a non-nilpotent $d$-maximal finite group $G$ must be solvable. However this inequality alone is not sufficient to deduce the solvability. Consider for example the direct product  $\Alt(5) \times H$, where $H$ is the semidirect product $(\F_{29})^2\rtimes \gen{\alpha}$ with $\alpha$ of order $7$ in $\F_{29}^\times$.
\end{remark} 

\subsection{Maximal $(p,q)$-pairs}\label{sec:pairs} 

Let $G$ be a non-nilpotent $d$-maximal group and let $P$ and $\alpha$ be as in \cref{propGen}. In particular, $\alpha^q$ generates a central subgroup of $G$ contained in $\Phi(G)$. It follows that the quotient $G/\langle \alpha^q \rangle$ is again $d$-maximal and of order $p^n q$, for some positive integer $n$. 
\cref{thChar} states that the study of these quotients is essentially equivalent to the investigation of maximal pairs.

\begin{proof}[Proof of Theorem \ref{thChar}]
It follows from Proposition \ref{propGen} that a $d$-maximal group $G$ satisfies either (1) or (2). Conversely, assume $G=P \rtimes \langle \beta \rangle$ is as described in (2) with $|\beta|=q^t$. This implies that $\dG(G)=d$. Let now $H$ be a proper subgroup of $G$. If $H$ is contained in $P$, then $\dG(H)<d$ by property \ref{it:a} of maximal pairs of rank $d-1$. So assume that $H$ is not contained in $P$, and let $Q$ be a Sylow $q$-subgroup of $H.$ If $|Q|< q^t,$ then $H$ is the direct product of $(H\cap P)$ and $Q$ and therefore $\dG(H)=\max(\dG(H\cap P),\dG(Q)) < d.$  Finally assume $|Q|=q^t$. Then, by the Sylow theorems, there exists $g\in G$ with $Q^g=\langle \beta \rangle$ and $H^g=(H^g \cap P)\langle \beta \rangle$. In particular $H^g\cap P$ is $\beta$-invariant, and property \ref{it:c} of maximal pairs of rank $d-1$ gives that $\dG(H)=\dG(H^g)< d$.
\end{proof}

\subsection{Actions through characters}\label{sec:basic}

In this section, let $A$ be a finite group and let $p$ be an odd prime. Let $\chi:A\rightarrow\Z_p^\times$ be a character. We define actions \emph{through characters} and present some related results that we will apply in the study of maximal $(p,q)$-pairs.

\begin{definition}
 The group $A$ is said to act on a group $G$ \emph{through $\chi$} if, for each $a\in A$ and $g\in G$, one has $g^a=g^{\chi(a)}$. 
\end{definition}

\begin{remark}\label{rmk:pair-char}
    Let $(P,\alpha)$ be a maximal $(p,q)$-pair and $A=\gen{\alpha}$.     Then, as a consequence of property \ref{it:b} of maximal pairs, there exists a non-trivial character $\chi:A\rightarrow\Z_p^\times$ such that $A$ acts on $P/\Phi(P)$ through $\chi$. 
Moreover, it follows from property \ref{it:c} that, if $H$ is a proper $\alpha$-invariant subgroup of $P$ with $\dG(H)=\dG(P)$ and such that $A$ acts on $H/\Phi(H)$ through a character $\chi_H$, then necessarily $\chi_H=1$. 
\end{remark}

The following result is straightforward.

\begin{lemma}\label{remDP}\label{remQuot}
    Let $(P,\alpha)$ and $(Q,\beta)$ be maximal $(p,q)$-pairs of ranks $d$ and $e$, respectively. Then the following hold:
    \begin{enumerate}[label=$(\arabic*)$]
        \item \label{it:quot} If $N$ is an $\alpha$-invariant normal subgroup of $P$ contained in $\Phi(P)$ and $\overline{\alpha}\in\Aut(P/N)$ is induced by $\alpha$, then $(P/N,\overline{\alpha})$ is also a  maximal $(p,q)$-pair of rank $d.$
        \item If $\gen{\alpha}$ and $\gen{\beta}$ act on $P/\Phi(P)$ and $Q/\Phi(Q)$ through the same character, then $(P\times Q, (\alpha,\beta))$ is a maximal $(p,q)$-pair of rank $d+e$.
    \end{enumerate}
\end{lemma}

The following results are taken from \cite[Sec.~2]{Sta21} and use the same notation.
In order, they are \cite[Lemma~2.5]{Sta21}, \cite[Lem.~2.6]{Sta21}, \cite[Cor.\ 2.12]{Sta21}, and \cite[Cor.\ 2.13]{Sta21}.

\begin{lemma}\label{lem:lcs-char}
    Let $P$ be a finite $p$-group that is also an $A$-group and assume that the induced action of $A$ on $P/\gamma_2(P)$ is through $\chi$. Then, for all integers $i\geq 1$, the induced action of $A$ on $\gamma_i(P)/\gamma_{i+1}(P)$ is through $\chi^i$.
\end{lemma}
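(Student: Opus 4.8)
The plan is to proceed by induction on $i$; the base case $i=1$ is precisely the hypothesis. The key ingredient for the inductive step is the classical fact that the commutator map induces a well-defined biadditive pairing
\[
\gamma_i(G)/\gamma_{i+1}(G)\times G/\gamma_2(G)\longrightarrow\gamma_{i+1}(G)/\gamma_{i+2}(G),\qquad (x\gamma_{i+1}(G),\,y\gamma_2(G))\longmapsto[x,y]\gamma_{i+2}(G),
\]
whose image generates the target as a group. That this pairing is well defined and additive in each variable follows from the commutator identities $[x_1x_2,y]\equiv[x_1,y][x_2,y]$ and $[x,y_1y_2]\equiv[x,y_1][x,y_2]$ modulo $\gamma_{i+2}(G)$, together with the inclusions $[\gamma_{i+1}(G),G]\subseteq\gamma_{i+2}(G)$ and $[\gamma_i(G),\gamma_2(G)]\subseteq\gamma_{i+2}(G)$ (the latter via the three subgroups lemma); in particular, $[x^m,y^n]\equiv[x,y]^{mn}\pmod{\gamma_{i+2}(G)}$ for all $m,n\in\Z$, and the class $[x,y]\gamma_{i+2}(G)$ depends only on the classes of $x$ and $y$ in $\gamma_i(G)/\gamma_{i+1}(G)$ and $G/\gamma_2(G)$, respectively.

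Assume now the statement holds for some $i\geq 1$, and fix $a\in A$. Since $A$ acts by automorphisms, the induced map on $\gamma_{i+1}(G)/\gamma_{i+2}(G)$ is a group homomorphism, so by the previous paragraph it is enough to compute the action on the class of a commutator $[x,y]$ with $x\in\gamma_i(G)$ and $y\in G$. Choose integers $m,n$ with $x^a\equiv x^m\pmod{\gamma_{i+1}(G)}$ and $y^a\equiv y^n\pmod{\gamma_2(G)}$; by the inductive hypothesis and the case $i=1$ we may take $m\equiv\chi(a)^i$ and $n\equiv\chi(a)$ modulo the exponents of the finite abelian $p$-groups $\gamma_i(G)/\gamma_{i+1}(G)$ and $G/\gamma_2(G)$, respectively. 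Then, using $[x,y]^a=[x^a,y^a]$ together with the biadditivity of the pairing,
\[
[x,y]^a\equiv[x^m,y^n]\equiv[x,y]^{mn}\equiv[x,y]^{\chi(a)^{i+1}}\pmod{\gamma_{i+2}(G)},
\]
the last congruence holding because $\gamma_{i+1}(G)/\gamma_{i+2}(G)$ is a $p$-group and $mn\equiv\chi(a)^{i}\chi(a)=\chi(a)^{i+1}$ modulo its exponent. Hence $A$ acts on $\gamma_{i+1}(G)/\gamma_{i+2}(G)$ through $\chi^{i+1}$, completing the induction.

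The only point requiring care is the assertion of the first paragraph, namely that the commutator induces an additive pairing on the successive quotients of the lower central series; this is standard material (it expresses the fact that $\bigoplus_{j\geq 1}\gamma_j(G)/\gamma_{j+1}(G)$ is a graded Lie ring generated in degree one), so in the write-up I would simply invoke it and cite a reference. Once it is in place, the remainder is the short formal computation above, and no genuine difficulty remains.
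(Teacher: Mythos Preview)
Your argument is correct and is the standard proof of this fact: induction on $i$ using the biadditive commutator pairing on successive lower-central quotients, together with the equivariance $[x,y]^a=[x^a,y^a]$. The one point that deserves a word of care is the final congruence: you should make explicit that since $\chi$ takes values in $\Z_p^*$ and all groups in sight are finite $p$-groups, one may choose the integers $m,n$ congruent to $\chi(a)^i,\chi(a)$ modulo a power of $p$ exceeding $|G|$, so that $mn\equiv\chi(a)^{i+1}$ modulo the exponent of $\gamma_{i+1}(G)/\gamma_{i+2}(G)$ automatically. With that clarification the proof is complete.

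Note that the paper does not actually prove this lemma: it is quoted verbatim as \cite[Lemma~2.5]{Sta21} and used as a black box. Your write-up therefore supplies what the paper only cites, and it does so via exactly the argument one would expect (and, presumably, the one in the cited reference).
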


\begin{lemma}\label{lem:surjective}
Let $P_1$ and $P_2$ be finite $p$-groups that are also $A$-groups, and assume that $A$ acts on $P_1$ through $\chi$. Moreover, let $\phi:P_1 \rightarrow P_2$ be a surjective homomorphism respecting the action of $A$, i.e.\ for all $a\in A$ and $g\in P_1$, one has that $\phi(g^a) = \phi(g)^a$. Then $A$ acts on $P_2$ through $\chi$. 
\end{lemma}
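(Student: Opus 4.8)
The plan is to exploit surjectivity of $\phi$ to transport the known action on $P_1$ to $P_2$, simply chasing through the definitions; no structural input about the $p$-groups or about $A$ is needed beyond what is hypothesized.

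First I would fix an arbitrary element $h\in P_2$ and an arbitrary $a\in A$. Since $\phi$ is surjective, I can choose $g\in P_1$ with $\phi(g)=h$. Then, using in turn that $\phi$ respects the $A$-action, that $A$ acts on $P_1$ through $\chi$, and that $\phi$ is a group homomorphism, I would compute
\[
h^a=\phi(g)^a=\phi(g^a)=\phi\bigl(g^{\chi(a)}\bigr)=\phi(g)^{\chi(a)}=h^{\chi(a)}.
\]
As $h\in P_2$ and $a\in A$ were arbitrary, this shows $A$ acts on $P_2$ through $\chi$, which is the claim.

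The only point deserving a word of care — and it is a minor one, not a genuine obstacle — is the interpretation of the power $\chi(a)\in\Z_p^*$ and the middle identity $\phi(g^{\chi(a)})=\phi(g)^{\chi(a)}$. For an element $x$ of a finite $p$-group of order $p^n$, the power $x^{\chi(a)}$ is by convention $x^m$ for any integer $m\equiv\chi(a)\pmod{p^n}$, which is unambiguous because $\chi(a)$ is a $p$-adic unit. Since the order of $\phi(g)$ divides the order of $g$, any integer representative of $\chi(a)$ that is valid for $g$ is also valid for $\phi(g)$, and the displayed identity then reduces to the elementary fact that a group homomorphism commutes with taking integer powers. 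Hence the lemma follows formally from the definitions once this bookkeeping about powers is made explicit.
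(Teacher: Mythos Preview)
Your proof is correct; the argument is the natural direct verification, and your care with the meaning of $p$-adic powers on a finite $p$-group is appropriate. Note that the paper does not supply its own proof of this lemma but simply imports it from \cite[Lem.~2.6]{Sta21}, so there is nothing further to compare against.
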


\begin{lemma}\label{lem:abelian+-}
  Let $P$ be a finite abelian $p$-group on which $A$ acts through $\chi$. Assume that $A=\gen{\alpha}$ has order $2$ and write 
  \[
  P^+=\{x\in P \mid \alpha(x)=x\} \ \ \textup{ and }\ \ P^-=\{x\in P \mid \alpha(x)=x^{-1}\}.
  \]
  Then $P=P^+\oplus P^-$.
\end{lemma}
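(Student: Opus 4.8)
The plan is to run the standard ``averaging'' (Maschke-type) argument, the key point being that the hypothesis ``$p$ odd'' makes multiplication by $2$ an automorphism of $M$. Write $M$ additively. Since $|M|$ is a power of the odd prime $p$, the map $M\to M$, $x\mapsto 2x$, is bijective; denote its inverse by $x\mapsto \tfrac12 x$. This inverse commutes with $\alpha$: from $\alpha(2x)=2\alpha(x)$ one gets $\alpha(\tfrac12 y)=\tfrac12\alpha(y)$ for every $y\in M$.

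First I would record that $M^+$ and $M^-$ are indeed subgroups of $M$ (immediate, as $\alpha$ is an endomorphism and $M$ is abelian) and that $M^+\cap M^-=0$: if $x$ lies in the intersection, then $x=\alpha(x)=-x$, hence $2x=0$, and injectivity of multiplication by $2$ forces $x=0$. Next, for the decomposition, take an arbitrary $x\in M$ and set $x^+:=\tfrac12\bigl(x+\alpha(x)\bigr)$ and $x^-:=\tfrac12\bigl(x-\alpha(x)\bigr)$. Using $\alpha^2=\id$ together with the fact that $\tfrac12(-)$ commutes with $\alpha$, we get $\alpha(x^+)=\tfrac12\bigl(\alpha(x)+x\bigr)=x^+$ and $\alpha(x^-)=\tfrac12\bigl(\alpha(x)-x\bigr)=-x^-$, so $x^+\in M^+$ and $x^-\in M^-$; moreover $x^++x^-=\tfrac12(2x)=x$. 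Hence $M=M^++M^-$, and together with $M^+\cap M^-=0$ this yields $M=M^+\oplus M^-$.

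Alternatively, one can exploit the hypothesis that the action is \emph{through} the character $\chi$: the element $\chi(\alpha)\in\Z_p^*$ then satisfies $\chi(\alpha)^2=1$, and since the torsion subgroup of $\Z_p^*$ is cyclic for $p$ odd, necessarily $\chi(\alpha)=\pm1$; thus $M=M^+$ if $\chi(\alpha)=1$ and $M=M^-$ if $\chi(\alpha)=-1$, and in either case the claim follows at once from $M^+\cap M^-=0$. I expect no genuine obstacle here: the only place the hypotheses are used is the invertibility of multiplication by $2$ on $M$ (equivalently, the absence of $2$-torsion), which is precisely where ``$p$ odd'' enters; the rest is formal manipulation with $\alpha^2=\id$.
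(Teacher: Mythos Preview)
Your argument is correct. The paper does not supply its own proof of this lemma; it simply quotes it as \cite[Cor.~2.12]{Sta21}. Your first approach---the Maschke-type averaging $x^{\pm}=\tfrac12(x\pm\alpha(x))$, using that multiplication by~$2$ is invertible on a finite abelian $p$-group with $p$ odd---is the standard one and is almost certainly what underlies the cited reference, so there is nothing to compare beyond that.

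Your second observation is worth flagging. Taken literally, the hypothesis ``$A$ acts on $M$ through $\chi$'' forces $\alpha(x)=x^{\chi(\alpha)}$ for \emph{every} $x\in M$, and since $\chi(\alpha)\in\{\pm1\}$ this already gives $M=M^+$ or $M=M^-$, making the statement trivial. However, the paper applies the lemma (in the proofs of \cref{prop:d=2p>3} and \cref{lemmap5}) to $A$-invariant abelian sections on which the action is \emph{not} through a single character: both $M^+$ and $M^-$ are nonzero there. So the hypothesis in the lemma as printed is more restrictive than what is actually used; what is really required---and what your averaging argument proves---is the general decomposition for an arbitrary order-$2$ action on a finite abelian $p$-group with $p$ odd. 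Your first proof is therefore the one that matches the intended use.
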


\begin{lemma}\label{-on quotients}
Let $P$ a finite $p$-group on which $A$ acts through $\chi$.  
Let $N$ be a normal $A$-invariant subgroup of $P$ such that the restriction
of $\alpha$ to $N$ equals the inversion map $x\mapsto x^{-1}$. Assume, moreover, that also the automorphism of $P/N$ that is induced by $\alpha$ is equal to the inversion map. Then $\alpha$ is the inversion map on $P$ and $P$ is abelian.
\end{lemma}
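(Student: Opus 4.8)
The plan is to first show that $P$ is abelian, and then to upgrade this to the statement that $\alpha$ is the inversion map on $P$.

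For the abelianness I would argue by contradiction, assuming $\gamma_2(P)\neq 1$. Since the inversion map is a homomorphism on $P/N$, that quotient is abelian, so $\gamma_2(P)\subseteq N$; in particular $N\neq 1$, and, because $\alpha$ inverts $N$, one gets $\chi(\alpha)\equiv -1\pmod p$. The subgroups $\gamma_2(P)$ and $\gamma_3(P)$ are characteristic, hence $\alpha$-invariant, and both lie inside $N$, so $A$ acts on the section $\gamma_2(P)/\gamma_3(P)$ through $\chi$ (it is a sub-quotient of $N$). On the other hand, $A$ acts on $P/\gamma_2(P)$ through $\chi$ by hypothesis, so \cref{lem:lcs-char}, applied with $G=P$, gives that $A$ acts on $\gamma_2(P)/\gamma_3(P)$ through $\chi^2$. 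Comparing the two descriptions, $x^{\chi(\alpha)^2-\chi(\alpha)}=1$ for every $x$ in that section; by nilpotence $\gamma_2(P)\neq\gamma_3(P)$, so the section is a non-trivial $p$-group, its exponent is divisible by $p$ and divides $\chi(\alpha)\bigl(\chi(\alpha)-1\bigr)$, and since $\chi(\alpha)$ is a unit this forces $\chi(\alpha)\equiv 1\pmod p$. Together with $\chi(\alpha)\equiv -1\pmod p$ this yields $p\mid 2$, contradicting that $p$ is odd. Hence $\gamma_2(P)=\gamma_3(P)$ and $P$ is abelian.

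In the second step, assuming $P$ abelian, I would write $x^\alpha=x^{\chi(\alpha)}$ for all $x\in P$; the two remaining hypotheses say exactly that $\chi(\alpha)\equiv -1$ modulo $\exp(N)$ and modulo $\exp(P/N)$, so $\alpha^2$ centralizes both $N$ and $P/N$. Since $P$ is abelian, the kernel of the restriction map $\Aut(P)\to\Aut(N)\times\Aut(P/N)$ embeds into $\Hom(P/N,N)$, hence is a $p$-group; as the order of $\alpha$ is coprime to $p$, this forces $\alpha^2=\id$. If $\alpha=\id$, then $N$ and $P/N$ have exponent dividing $2$, so $P=1$ (as $p$ is odd) and the statement is trivial; otherwise $\alpha$ has order $2$, and for $x\in P$ we may write $\alpha(x)=x^{-1}n$ with $n\in N$, whence $x=\alpha^2(x)=\alpha(x)^{-1}\alpha(n)=n^{-1}xn^{-1}=xn^{-2}$, so $n^2=1$ and thus $n=1$. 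Therefore $\alpha(x)=x^{-1}$ for all $x\in P$. (Alternatively, one can finish by invoking \cref{lem:abelian+-}: writing $P=P^+\oplus P^-$, the inclusion $N\subseteq P^-$ together with the fact that $\alpha$ inverts $P/N$ forces $P^+\subseteq N$, hence $P^+=1$.)

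The crux is the first step: the hypotheses pin down $\alpha$ only on $N$ and on $P/N$, and the only bridge between these two pieces of information is the multiplicativity of $\chi$ along the lower central series (\cref{lem:lcs-char}), which collides with the elementary fact that $1\not\equiv -1$ modulo an odd prime. Everything afterwards is bookkeeping, modulo the coprimality of $|\langle\alpha\rangle|$ with $p$; that coprimality is what rules out spurious examples such as multiplication by $p-1$ on $\Z/p^2\Z$, for which the conclusion would otherwise fail.
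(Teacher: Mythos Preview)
The paper does not prove this lemma: it is quoted verbatim as \cite[Cor.~2.13]{Sta21}, so there is no in-paper argument to compare against. I will therefore just comment on your proof.

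Your first step is fine (the parenthetical ``it is a sub-quotient of $N$'' is a slightly misleading justification---the action through $\chi$ on $\gamma_2(P)/\gamma_3(P)$ comes from $A$ acting through $\chi$ on all of $P$, not from $N$---but the conclusion is correct).

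In the second step, however, you treat the coprimality of $|\langle\alpha\rangle|$ with $p$ as an extra hypothesis and even propose a counterexample without it. This is the one genuine gap: that coprimality is already forced by the standing assumptions of the section. Since $A$ is finite and $\chi:A\to\Z_p^*$ is a homomorphism, $\chi(\alpha)$ lies in the torsion subgroup of $\Z_p^*$, which for odd $p$ is exactly $\mu_{p-1}$. Hence the automorphism $g\mapsto g^{\chi(\alpha)}$ of $P$ has order dividing $p-1$, and your step goes through. Your ``spurious example'' (multiplication by $p-1$ on $\Z/p^2\Z$) does not arise from any character $A\to\Z_p^*$ with $A$ finite, because $p-1\in\Z_p^*$ has infinite order.

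Once you notice this, there is in fact a much shorter route that bypasses both of your steps. From the hypothesis on $N$ (or on $P/N$) one gets $\chi(\alpha)\equiv -1\pmod p$; since $\chi(\alpha)\in\mu_{p-1}$ and the reduction $\mu_{p-1}\to\F_p^\times$ is a bijection (Teichm\"uller), this forces $\chi(\alpha)=-1$ in $\Z_p^*$. Thus $\alpha$ is literally inversion on $P$, and $P$ is abelian because inversion is an automorphism.
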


\section{General results on maximal pairs}\label{sec:general}

Until the end of \cref{sec:general}, let $(P,\alpha)$ denote a maximal $(p,q)$-pair of rank $d$ and $A=\gen{\alpha}$. Moreover let  $\chi:A\rightarrow\Z_p^\times$ be the character through which $A$ acts on $P/\Phi(P)$ as in \cref{rmk:pair-char}.

\begin{lemma}\label{lem:basic1}
The following hold:
\begin{enumerate}[label=$(\arabic*)$]
    \item \label{it:easy1} one has $\Phi(P)=\gamma_2(P)$;
    \item \label{it:easy2} for each $i\geq 1$, the quotient $\gamma_i(P)/\gamma_{i+1}(P)$ is elementary abelian;
    \item \label{it:easy3} the induced action of $A$ on $\gamma_i(P)/\gamma_{i+1}(P)$ is through $\chi^i$.
\end{enumerate}
\end{lemma}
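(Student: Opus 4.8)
The plan is to derive all three statements from property \ref{it:b} of the maximal pair together with the results on actions through characters collected in \cref{sec:basic}. The key observation is that property \ref{it:b} says exactly that $A=\gen{\alpha}$ acts on $P/\Phi(P)$ as a non-trivial power automorphism; since $P/\Phi(P)$ is an elementary abelian $p$-group and $q\mid p-1$, this action is scalar multiplication by a non-trivial element of $\F_p^\times$, i.e.\ it is through a non-trivial character $\chi:A\to\Z_p^\ast$ (as recorded in \cref{rmk:pair-char}). So from the outset we may regard $P$ as an $A$-group on which the induced action on $P/\Phi(P)$ is through $\chi$.

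First I would prove \ref{it:easy1}, namely $\Phi(P)=\gamma_2(P)$. The inclusion $\gamma_2(P)\subseteq\Phi(P)$ is automatic for any finite $p$-group, so the content is $\Phi(P)\subseteq\gamma_2(P)$, equivalently $\mho_1(P)\subseteq\gamma_2(P)$, i.e.\ $P/\gamma_2(P)$ is elementary abelian. Consider the abelianization $\overline{P}=P/\gamma_2(P)$, with the automorphism $\overline\alpha$ induced by $\alpha$; by \cref{lem:surjective} applied to the quotient map $P/\Phi(P)\twoheadrightarrow$ (the Frattini quotient of $\overline P$), or more directly since $\overline P/\Phi(\overline P)=P/\Phi(P)$, the group $A$ acts on $\overline P/\Phi(\overline P)$ through the non-trivial character $\chi$. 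Now if $\overline P$ were not elementary abelian, it would have a characteristic (hence $\alpha$-invariant) subgroup $K=\mho_1(\overline P)\le\Phi(\overline P)$, and one can produce a proper $\alpha$-invariant subgroup $H$ of $P$ with $\dG(H)=\dG(P)=d$ on which $\alpha$ still induces a non-trivial power automorphism modulo its Frattini subgroup, contradicting property \ref{it:c} (via \cref{rmk:pair-char}). The cleanest way: take $H$ to be the full preimage in $P$ of a suitable index-$p$ subgroup of $\overline P$ that is still $\alpha$-invariant and has $d$ generators — because $\overline\alpha$ is scalar on the Frattini quotient, every subgroup between $\Phi(\overline P)$ and $\overline P$ is $\overline\alpha$-invariant, and if $\mho_1(\overline P)\neq 1$ one can find such an $H$ with $\dG(H)=d$. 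This is the step I expect to be the main obstacle, since it requires carefully locating the right proper $\alpha$-invariant subgroup of full rank and checking the power-automorphism condition descends to it; the rank bookkeeping (that passing to this $H$ does not drop the number of generators, precisely because we only quotient inside $\Phi$) is the delicate point.

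Given \ref{it:easy1}, statement \ref{it:easy2} for $i=1$ is immediate: $\gamma_1(P)/\gamma_2(P)=P/\Phi(P)$ is elementary abelian by definition of $\Phi$. For $i\ge 2$ the quotients $\gamma_i(P)/\gamma_{i+1}(P)$ are generated by (images of) basic commutators, and since $P/\gamma_2(P)$ has exponent $p$ and the commutator map $\gamma_{i-1}(P)/\gamma_i(P)\times P/\gamma_2(P)\to\gamma_i(P)/\gamma_{i+1}(P)$ is bilinear, an easy induction on $i$ shows $\gamma_i(P)/\gamma_{i+1}(P)$ has exponent $p$; alternatively this is the standard fact that in any group $\gamma_i(G)^p\gamma_{i+1}(G)/\gamma_{i+1}(G)$ is killed once $G/\gamma_2(G)$ has exponent $p$, combined with \ref{it:easy1}. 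Finally, \ref{it:easy3} is now a direct application of \cref{lem:lcs-char}: we have established that $P$ is a finite $p$-group which is an $A$-group with induced action on $P/\gamma_2(P)$ through $\chi$, so \cref{lem:lcs-char} yields that the induced action of $\alpha$ on $\gamma_i(P)/\gamma_{i+1}(P)$ is through $\chi^i$ for all $i\ge 1$, which is exactly \ref{it:easy3}.
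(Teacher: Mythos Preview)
Your treatment of parts \ref{it:easy2} and \ref{it:easy3} is correct and matches the paper: once \ref{it:easy1} is in hand, the bilinearity of the commutator map together with $P/\gamma_2(P)$ having exponent $p$ gives \ref{it:easy2}, and \ref{it:easy3} is exactly \cref{lem:lcs-char}.

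The gap is in your argument for \ref{it:easy1}. Your candidate $H$, the preimage in $P$ of an index-$p$ subgroup $K$ of $\overline P=P/\gamma_2(P)$ with $\dG(K)=d$, is indeed proper, $\alpha$-invariant, and has $\dG(H)=d$ (since $H$ surjects onto the $d$-generated $K$ and $\dG(H)\le d$ by property \ref{it:a}); so the ``rank bookkeeping'' you worry about is actually fine. What fails is the other condition in \ref{it:c}: you need $\alpha$ to induce a non-trivial power automorphism on $H/\Phi(H)$, and you never check this. You control the action of $\alpha$ on $H/\Phi(P)$ (a hyperplane of $P/\Phi(P)$, where the action is through $\chi$), but $\Phi(H)$ is strictly smaller than $\Phi(P)$, and the quotient $\Phi(P)/\Phi(H)$ may well involve pieces of $\gamma_2(P)$ on which, by \cref{lem:lcs-char}, $\alpha$ acts through $\chi^2\neq\chi$. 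In that case $\alpha$ does \emph{not} act as a scalar on $H/\Phi(H)$, and no contradiction with \ref{it:c} arises.

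The paper avoids this entirely by first applying \cref{remQuot} with $N=\gamma_2(P)$ (and then with $\mho_2$) to reduce to the case where $P$ itself is abelian of exponent at most $p^2$. In that setting $p$-th powering is an $A$-equivariant surjection onto $\mho_1(P)$, so \cref{lem:surjective} shows $A$ acts on $\mho_1(P)$ through $\chi$; then $\Omega_1(P)$ is an $\alpha$-invariant elementary abelian subgroup of rank exactly $d$ on which $A$ acts through $\chi$ (both on $\mho_1(P)$ and on $\Omega_1(P)/\mho_1(P)\hookrightarrow P/\Phi(P)$, hence on all of $\Omega_1(P)$ by Maschke), contradicting \ref{it:c} unless $\Omega_1(P)=P$. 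Invoking \cref{remQuot} is the missing move in your plan; once you reduce to the abelian case, your approach and the paper's coincide.
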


\begin{proof}
We start by proving \ref{it:easy1}. Applying \cref{remQuot}\ref{it:quot} to $N=\gamma_2(P)$, we assume without loss of generality that $P$ is abelian of exponent dividing $p^2$. Then $p$-th powering is a homomorphism $P\rightarrow\mho_1(P)$ and so it follows from \cref{lem:surjective} that $A$ acts on $\mho_1(P)$  through $\chi$. In order not to contradict property \ref{it:c} of maximal pairs, the group $P$ has to be equal to $\Omega_1(P)$, i.e.\ $P$ has exponent $p$. 
    Now \ref{it:easy2} immediately follows from \ref{it:easy1}, while \ref{it:easy3} is the combination of \ref{it:easy1} with \cref{lem:lcs-char}. 
\end{proof}

The following result follows directly from \cref{lem:basic1} and \cref{rmk:pair-char}.

\begin{lemma}\label{lem:corba}
Let $(P,\alpha)$ be a maximal pair of rank $d$ and let $i$ be a positive integer. Then the following hold:
\begin{enumerate}[label=$(\arabic*)$]
    \item one has $\dG(\gamma_i(P)/\gamma_{i+1}(P))\leq d$; 
    \item if $i\geq 2$ and $\dG(\gamma_i(P)/\gamma_{i+1}(P))=d$, then $\chi^i=1$.
\end{enumerate}
\end{lemma}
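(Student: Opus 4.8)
The plan is to derive both parts purely from properties~\ref{it:a} and~\ref{it:c} of maximal $(p,q)$-pairs, together with the structural facts collected in \cref{lem:basic1} and the second assertion of \cref{rmk:pair-char}; no new commutator computation is needed.

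For part~$(1)$ I would observe that $\gamma_i(P)$ is a subgroup of $P$, so property~\ref{it:a} gives $\dG(\gamma_i(P))\leq d$, and since the minimal number of generators cannot increase under quotients, $\dG(\gamma_i(P)/\gamma_{i+1}(P))\leq\dG(\gamma_i(P))\leq d$. (For $i=1$ this reads $\dG(P/\Phi(P))=d$, using \cref{lem:basic1}\ref{it:easy1}.)

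For part~$(2)$, assume $i\geq 2$ and $\dG(\gamma_i(P)/\gamma_{i+1}(P))=d$. First I would note that $\dG(P)=d\geq 1$ forces $\gamma_i(P)\neq 1$, and that $\gamma_i(P)\subseteq\gamma_2(P)=\Phi(P)\subsetneq P$ (via \cref{lem:basic1}\ref{it:easy1}), so $H:=\gamma_i(P)$ is a proper characteristic --- hence $\alpha$-invariant --- subgroup of $P$. The one point requiring an argument is to identify the Frattini quotient of $H$: by \cref{lem:basic1}\ref{it:easy2} the group $\gamma_i(P)/\gamma_{i+1}(P)$ is elementary abelian, so $\Phi(H)\subseteq\gamma_{i+1}(P)$ and there is a surjection $H/\Phi(H)\twoheadrightarrow\gamma_i(P)/\gamma_{i+1}(P)$ of $\F_p$-vector spaces. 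This surjection forces $\dG(H)\geq d$, and combined with property~\ref{it:a} it yields $\dG(H)=d$, so the map is an isomorphism of $d$-dimensional spaces and $\Phi(H)=\gamma_{i+1}(P)$. Then \cref{lem:basic1}\ref{it:easy3} tells us that $A=\gen{\alpha}$ acts on $H/\Phi(H)=\gamma_i(P)/\gamma_{i+1}(P)$ through the character $\chi^i$, and applying the second part of \cref{rmk:pair-char} to the proper $\alpha$-invariant subgroup $H$, which satisfies $\dG(H)=\dG(P)$, forces $\chi^i=1$.

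The only step that is not pure bookkeeping is the identification $\Phi(\gamma_i(P))=\gamma_{i+1}(P)$ under the hypothesis $\dG(\gamma_i(P)/\gamma_{i+1}(P))=d$: this is exactly what makes $\gamma_i(P)$ an admissible input for \cref{rmk:pair-char}, and it is where the generation hypothesis is genuinely used. I would also keep an eye on the degenerate cases ($P$ abelian, or $\gamma_i(P)=1$), but there the hypothesis of part~$(2)$ simply fails because $\dG(P)\geq 1$, so nothing extra is required.
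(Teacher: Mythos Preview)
Your proof is correct and follows precisely the approach indicated in the paper, which simply states that the lemma follows directly from \cref{lem:basic1} and \cref{rmk:pair-char}. Your write-up spells out the one nontrivial step---identifying $\Phi(\gamma_i(P))=\gamma_{i+1}(P)$ under the hypothesis $\dG(\gamma_i(P)/\gamma_{i+1}(P))=d$ so that \cref{rmk:pair-char} applies---exactly as intended.
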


The following definition is taken from \cite[Sec.\ 2.3]{Sta21}. 

\begin{definition}
    Let $G$ be a finite $p$-group and let $H$ be a subgroup of $G$. 
A positive integer $j$ is called a \emph{jump of $H$ in $G$} if $H\cap\gamma_j(G)\neq H\cap\gamma_{j+1}(G)$.
\end{definition}

\begin{lemma}\label{lem:q-mod}
    Let $H$ be an $A$-invariant subgroup of $P$ for which the $p$-th powering map is an endomorphism. Then, for each jump $\ell$ of $\mho_1(H)$ there exists a jump $i$ of $H$ such that $i<\ell$ and $i\equiv \ell \bmod q$. 
\end{lemma}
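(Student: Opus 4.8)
The plan is to transfer the problem through the $p$-th power map. Write $\pi\colon H\to\mho_1(H)$, $\pi(h)=h^p$; by hypothesis this is a homomorphism, it is surjective, and it commutes with $\alpha$ (automorphisms commute with $p$-th powers), so it is $A$-equivariant; moreover $\mho_1(H)=\{h^p\mid h\in H\}$ (the image of $\pi$). For $1\neq x\in P$ let $\nu(x)$ be the largest $j$ with $x\in\gamma_j(P)$, so that the jumps of a subgroup $K\le P$ are exactly the values $\nu(x)$ with $1\neq x\in K$. Recall from \cref{lem:basic1} that every quotient $\gamma_j(P)/\gamma_{j+1}(P)$ is elementary abelian and that $\alpha$ acts on it through $\chi^j$; in particular $\mho_1(\gamma_j(P))\subseteq\gamma_{j+1}(P)$. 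Put $H_j=\{x\in H\mid x^p\in\gamma_j(P)\}=\pi^{-1}(\gamma_j(P))$, an $\alpha$-invariant subgroup, with $H_{j+1}\le H_j$.

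Now let $\ell$ be a jump of $\mho_1(H)$; then $H_\ell\setminus H_{\ell+1}\neq\emptyset$, and I choose $h$ in this set with $\nu(h)$ maximal, and set $i:=\nu(h)$. I claim that $i$ is the desired jump. That $i$ is a jump of $H$ is clear from $h\in H\cap\gamma_i(P)\setminus\gamma_{i+1}(P)$; and $i<\ell$ because $h^p\in\mho_1(\gamma_i(P))\subseteq\gamma_{i+1}(P)$ while $h^p\notin\gamma_{\ell+1}(P)$, forcing $i+1\le\ell$. The real point is the congruence $i\equiv\ell\pmod q$. Choose an integer $c_0\equiv\chi(\alpha)^i\pmod p$; then $g:=h^{-c_0}\alpha(h)\in\gamma_{i+1}(P)$ by \cref{lem:basic1}, and $g\in H$ since $H$ is $\alpha$-invariant, so $g\in H\cap\gamma_{i+1}(P)$. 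Applying $\pi$ to $\alpha(h)=h^{c_0}g$ and writing $w:=h^p$ yields the identity $\alpha(w)=w^{c_0}g^p$. On the other hand, $w\in\gamma_\ell(P)$, so \cref{lem:basic1} gives $\alpha(w)\equiv w^{c_1}\pmod{\gamma_{\ell+1}(P)}$ for an integer $c_1\equiv\chi(\alpha)^\ell\pmod p$. Comparing the two expressions for $\alpha(w)$ and rearranging powers of $w$, we get $g^p\equiv w^{c_1-c_0}\pmod{\gamma_{\ell+1}(P)}$, where $w$ has nonzero image in the $\F_p$-space $\gamma_\ell(P)/\gamma_{\ell+1}(P)$.

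It remains to pin down where $g^p$ lies. If $g^p\in\gamma_{\ell+1}(P)$ then $w^{c_1-c_0}\in\gamma_{\ell+1}(P)$, whence $c_1\equiv c_0\pmod p$; since $\chi(\alpha)$ has order $q$ modulo $p$ (property \ref{it:b}), this gives $q\mid\ell-i$. Suppose instead $g^p\notin\gamma_{\ell+1}(P)$ and put $m:=\nu(g^p)\le\ell$. If $m=\ell$ then $g^p\in\gamma_\ell(P)\setminus\gamma_{\ell+1}(P)$, so $g\in H_\ell\setminus H_{\ell+1}$, and $\nu(g)\ge i+1>\nu(h)$ contradicts the maximality in the choice of $h$; so $m<\ell$. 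But then $w^{c_1-c_0}\equiv g^p\pmod{\gamma_{\ell+1}(P)}$ with $w^{c_1-c_0}\in\gamma_\ell(P)$ and $g^p\notin\gamma_\ell(P)$, which is possible only if $w^{c_1-c_0}\in\gamma_{\ell+1}(P)$, and as before this forces $c_1\equiv c_0\pmod p$ and $q\mid\ell-i$. In every case $i\equiv\ell\pmod q$, completing the proof.

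The one genuinely delicate point — the reason for maximizing $\nu(h)$ — is the ``deep $p$-th power'' phenomenon: since $P$ need not be regular, $h^p$ may lie far below $\gamma_{\nu(h)+1}(P)$, so the crude bound $\mho_1(\gamma_{i+1}(P))\subseteq\gamma_{i+2}(P)$ gives no useful control of the error term $g^p$ relative to $\gamma_{\ell+1}(P)$. Choosing $h$ with $\nu(h)$ maximal among elements of $H_\ell\setminus H_{\ell+1}$ eliminates precisely the obstructing case $\nu(g^p)=\ell$, after which the character identity does the rest. The remaining verifications — that $(x^{c})^p=(x^p)^{c}$ for powers of a single element, that $H_j$ and $\mho_1(H)$ are $\alpha$-invariant, and that a faithful character of the cyclic group $A$ of order $q$ has image of order $q$ in $\F_p^\times$ — are routine.
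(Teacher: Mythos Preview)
Your argument is correct (the case $m<\ell$ is in fact vacuous, since $g^p\equiv w^{c_1-c_0}\pmod{\gamma_{\ell+1}(P)}$ with $w^{c_1-c_0}\in\gamma_\ell(P)$ already forces $g^p\in\gamma_\ell(P)$; your phrasing there is slightly muddled but harmless). However, the maximality manoeuvre is unnecessary, and the paper's proof is considerably shorter. The paper takes \emph{any} $x\in H$ with $x^p\in\gamma_\ell(P)\setminus\gamma_{\ell+1}(P)$, sets $i=\nu(x)$, and observes that $p$-th powering on the cyclic group $\gen{x}$ is automatically an $A$-equivariant surjection onto $\gen{x^p}$; this descends to an $A$-equivariant isomorphism between the order-$p$ quotients $\gen{x}\gamma_{i+1}(P)/\gamma_{i+1}(P)$ and $\gen{x^p}\gamma_{\ell+1}(P)/\gamma_{\ell+1}(P)$, on which $A$ acts through $\chi^i$ and $\chi^\ell$ respectively, so \cref{lem:surjective} gives $\chi^i=\chi^\ell$ and hence $i\equiv\ell\pmod q$. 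The endomorphism hypothesis on $H$ is used only to ensure that such an $x$ exists. Your ``genuinely delicate point'' --- controlling the error term $g^p$ --- is a difficulty created by expanding $(h^{c_0}g)^p$ via the endomorphism property of $H$; working inside the cyclic group $\gen{x}$, where $(x^k)^p=(x^p)^k$ holds tautologically, sidesteps it entirely.
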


\begin{proof}
    Let $\ell$ be a jump of $\mho_1(H)$ and let $y\in \mho_1(H)\setminus\{1\}$ be such that $y\in\gamma_{\ell}(P)\setminus \gamma_{\ell+1}(P)$. 
    Since $p$-th powering is an endomorphism of $H$, the subgroup $\mho_1(H)$ equals the set of $p$-th powers of elements of $H$.
    Let $x\in H$ be such that $x^p=y$  and let $i$ be the unique positive integer such that $x\in\gamma_i(P)\setminus\gamma_{i+1}(P)$. Then $i$ is a jump of $H$ and $i<\ell$ thanks to \cref{lem:basic1}\ref{it:easy2}. Moreover, the $p$-th powering map induces a surjective homomorphism $\gen{x}\gamma_{i+1}(P)/\gamma_{i+1}(P)\rightarrow \gen{y}\gamma_{\ell+1}(P)/\gamma_{\ell+1}(P)$. It follows from \cref{lem:surjective} and \cref{lem:basic1}\ref{it:easy3} that the induced action of $A$ on $\gen{y}\gamma_{\ell+1}(P)/\gamma_{\ell+1}(P)$ is both through $\chi^i$ and $\chi^\ell$. As the order of $\alpha$ is $q$, this implies that $i\equiv \ell \bmod q$. 
\end{proof}

\begin{corollary}\label{cor:i+q}
Let $i$ be a positive integer. Then $\mho_1(\gamma_i(P))$ is contained in $\gamma_{i+q}(P)\gamma_{2i}(P)$. 
\end{corollary}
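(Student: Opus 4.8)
The plan is to apply Lemma \ref{lem:q-mod} to the subgroup $H=\gamma_i(P)$ and extract the stated containment. First I would check that $H=\gamma_i(P)$ satisfies the hypotheses of Lemma \ref{lem:q-mod}: it is certainly $A$-invariant (being characteristic in $P$), and by Lemma \ref{lem:basic1}\ref{it:easy1} one has $\Phi(P)=\gamma_2(P)$, so $P/\gamma_2(P)$ is elementary abelian and $p$-th powering is an endomorphism on every $\gamma_i(P)$ with $i\geq 2$; for $i=1$ one argues as in the proof of Lemma \ref{lem:basic1} (or simply notes $\gamma_1(P)=P$ and reduces to the case where $P$ is already handled). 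So Lemma \ref{lem:q-mod} applies and tells us: for every jump $\ell$ of $\mho_1(\gamma_i(P))$, there is a jump $j$ of $\gamma_i(P)$ with $j<\ell$ and $j\equiv\ell\bmod q$.

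Next I would translate "jump of $\gamma_i(P)$" into a lower bound on $\ell$. Since $\gamma_i(P)\cap\gamma_j(P)$ equals $\gamma_i(P)$ for $j\leq i$ and equals $\gamma_j(P)$ for $j\geq i$, the only jump of $\gamma_i(P)$ in $P$ is $j=i$ itself (the chain $\gamma_i(P)\cap\gamma_j(P)$ is constant for $j\leq i$ and then strictly decreases starting at $j=i$, assuming $\gamma_i(P)\neq 1$; if $\gamma_i(P)=1$ the statement is vacuous). Hence every jump $\ell$ of $\mho_1(\gamma_i(P))$ satisfies $\ell>i$ and $\ell\equiv i\bmod q$, so $\ell\geq i+q$. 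The smallest jump $\ell$ of a nontrivial subgroup $\mho_1(\gamma_i(P))$ is the unique integer with $\mho_1(\gamma_i(P))\subseteq\gamma_\ell(P)$ and $\mho_1(\gamma_i(P))\not\subseteq\gamma_{\ell+1}(P)$; since this $\ell$ is at least $i+q$ we get $\mho_1(\gamma_i(P))\subseteq\gamma_{i+q}(P)$, which is even stronger than the claimed $\gamma_{i+q}(P)\gamma_{2i}(P)$.

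I would then reconcile this with the exact statement in the corollary, which allows the weaker target $\gamma_{i+q}(P)\gamma_{2i}(P)$. The reason the weaker form is stated is presumably to have a uniform statement that does not require $\mho_1(\gamma_i(P))$ itself to have the $p$-th powering map as an endomorphism globally; more precisely, one wants to work modulo $\gamma_{2i}(P)$, where $\gamma_i(P)/\gamma_{2i}(P)$ is handled cleanly. So the cleanest write-up is: pass to $\overline{P}=P/\gamma_{2i}(P)$ (an $\alpha$-invariant quotient inside $\Phi$-considerations, so Lemma \ref{remQuot} keeps it a maximal pair), note $\overline{\gamma_i(P)}$ is central of exponent dividing $p^2$ hence $p$-th powering is an endomorphism on it, apply Lemma \ref{lem:q-mod} there to conclude $\mho_1(\gamma_i(P))\gamma_{2i}(P)/\gamma_{2i}(P)\subseteq\gamma_{i+q}(\overline P)$, and pull back.

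The only mildly delicate point — the "main obstacle", such as it is — is the bookkeeping on jumps: making sure one correctly identifies that the unique jump of $\gamma_i(P)$ in $P$ is $i$, and that Lemma \ref{lem:q-mod}'s conclusion "$j<\ell$ and $j\equiv\ell\bmod q$" with $j=i$ forces $\ell\geq i+q$ rather than merely $\ell>i$. There is no real computation; the whole proof is one invocation of Lemma \ref{lem:q-mod} plus the congruence argument.
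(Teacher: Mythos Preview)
Your third paragraph is exactly the paper's proof: pass to $\overline{P}=P/\gamma_{2i}(P)$, observe that $\gamma_i(\overline{P})$ is abelian (since $[\gamma_i(P),\gamma_i(P)]\subseteq\gamma_{2i}(P)$) so that $p$-th powering is an endomorphism there, apply \cref{lem:q-mod}, and pull back. That part is correct, modulo the slip that $\gamma_i(\overline{P})$ is \emph{abelian}, not central in $\overline{P}$ for $i\geq 2$.

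The problem is that you present this as a cosmetic clean-up of an already-valid argument, when in fact your first two paragraphs do not work. The assertion that ``$p$-th powering is an endomorphism on every $\gamma_i(P)$ with $i\geq 2$'' is not justified and is generally false: \cref{lem:basic1} tells you the successive quotients $\gamma_i(P)/\gamma_{i+1}(P)$ are elementary abelian, but $\gamma_i(P)$ itself need not be abelian (its derived subgroup lies in $\gamma_{2i}(P)$, which may well be nontrivial), and in a non-abelian $p$-group $p$-th powering is not a homomorphism in general. So \cref{lem:q-mod} cannot be applied directly to $H=\gamma_i(P)$; this is precisely \emph{why} one must mod out by $\gamma_{2i}(P)$, and why the factor $\gamma_{2i}(P)$ appears in the statement rather than the stronger $\mho_1(\gamma_i(P))\subseteq\gamma_{i+q}(P)$ you claim.

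Separately, the claim that ``the only jump of $\gamma_i(P)$ in $P$ is $j=i$'' is wrong: for $j\geq i$ one has $\gamma_i(P)\cap\gamma_j(P)=\gamma_j(P)$, so every $j\geq i$ with $\gamma_j(P)\neq\gamma_{j+1}(P)$ is a jump. This error happens to be harmless for the inference (all you need is $j\geq i$ to conclude $\ell\geq i+q$), but it should be corrected.
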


\begin{proof}
    Write $\overline{P}=P/\gamma_{2i}(P)$ and use the bar notation for the subgroups of $\overline{P}$. Then $\gamma_i(\overline{P})=\overline{\gamma_i(P)}$ is abelian and therefore $p$-th powering on $\gamma_i(\overline{P})$ is an endomorphism. 
    It follows from \cref{lem:q-mod} that $\mho_1(\gamma_i(\overline{P}))$ is contained in $\gamma_{i+q}(\overline{P})$ and so we derive that $\gamma_i(P)$ is contained in $\gamma_{i+q}(P)\gamma_{2i}(P)$.
\end{proof}

\begin{corollary}\label{gamma4}
The group $\mho_1(\gamma_2(P))$ is contained in $\gamma_4(P)$.
\end{corollary}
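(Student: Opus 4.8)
\textbf{Proof proposal for \cref{gamma4}.}

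The plan is to deduce this directly from \cref{cor:i+q} together with the structural constraints on maximal $(p,q)$-pairs. First I would apply \cref{cor:i+q} with $i=2$: this gives immediately that $\mho_1(\gamma_2(P))$ is contained in $\gamma_{2+q}(P)\gamma_4(P)$. If $q=2$, then $\gamma_{2+q}(P)=\gamma_4(P)$ and we are done at once, so the only case that requires work is $q\geq 3$, where $2+q\geq 5$ and a priori the containment we get is only in $\gamma_5(P)\gamma_4(P)=\gamma_4(P)$ — wait, since $\gamma_5(P)\subseteq\gamma_4(P)$ always, the product $\gamma_{2+q}(P)\gamma_4(P)$ equals $\gamma_4(P)$ regardless of the value of $q$, because $2+q\geq 4$ for every prime $q$. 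So the statement is in fact an immediate consequence of \cref{cor:i+q}: the term $\gamma_{2i}(P)=\gamma_4(P)$ absorbs the term $\gamma_{i+q}(P)=\gamma_{2+q}(P)$ since $q\geq 2$ forces $2+q\geq 4$.

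Thus the proof is essentially a one-line specialization: take $i=2$ in \cref{cor:i+q} to get $\mho_1(\gamma_2(P))\subseteq \gamma_{2+q}(P)\gamma_4(P)$, and observe that $\gamma_{2+q}(P)\subseteq\gamma_4(P)$ because the lower central series is descending and $2+q\geq 4$. Hence $\mho_1(\gamma_2(P))\subseteq\gamma_4(P)$, as claimed.

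The only conceivable obstacle would be if one wanted $q=2$ handled separately or if $\gamma_4(P)$ could be trivial in a way that made the statement vacuous rather than genuinely informative; neither is an issue here, since the inclusion is formal and holds whether or not $P$ has class less than $4$. So I do not expect any real difficulty: the content of the corollary is entirely packaged inside \cref{cor:i+q}, and \cref{gamma4} is just the case $i=2$ rewritten using $2i=4\leq 2+q$.
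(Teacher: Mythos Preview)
Your proposal is correct and matches the paper's intent: the corollary is stated without proof immediately after \cref{cor:i+q}, and your argument (specialize to $i=2$ and use $2+q\geq 4$ so that $\gamma_{2+q}(P)\subseteq\gamma_4(P)$) is precisely the one-line justification the paper leaves implicit. The only stylistic remark is that your written-out ``wait'' detour is unnecessary---the observation $\gamma_{2+q}(P)\gamma_4(P)=\gamma_4(P)$ holds for every prime $q$, so no case split is needed.
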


\begin{proposition}\label{prop:no-exp-p}
 Assume that $P$ has class $3$. Then $P$ does not have exponent $p$.
\end{proposition}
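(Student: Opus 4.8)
The plan is to suppose, for a contradiction, that $P$ has exponent $p$ and class exactly $3$. Then $\gamma_4(P)=1\neq\gamma_3(P)$, the Frattini subgroup $\Phi(P)=\gamma_2(P)$ is elementary abelian by \cref{lem:basic1}\ref{it:easy1} and \cref{gamma4}, $p$ is odd, and $\alpha$ acts on $\gamma_i(P)/\gamma_{i+1}(P)$ through $\chi^i$ by \cref{lem:basic1}\ref{it:easy3}. Write $c:=\chi(\alpha)$. Starting from a lift $x_1,\dots,x_d$ of a basis of $P/\gamma_2(P)$, I would first replace each $x_i$ by a suitable element of $x_i\gamma_2(P)$ so that $\alpha(x_i)=x_i^{c}$ holds exactly: since the induced $\alpha$-actions on $\gamma_2(P)/\gamma_3(P)$ and on $\gamma_3(P)$ are coprime, the errors can be removed layer by layer, the only arithmetic obstruction being the vanishing of $c^{3}-c=c(c-1)(c+1)$ modulo $p$, which occurs exactly when $q=2$; but there $c\equiv-1$ and $\alpha^{2}=\id$ already forces the residual $\gamma_3(P)$-error to have order dividing $\gcd(2,p)=1$. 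In particular every subgroup of $P$ of the form $\langle x_i:i\in S\rangle$ or $\langle x_i:i\in S\rangle\gamma_3(P)$ is $\alpha$-invariant.

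The heart of the proof should be the case $q=2$, where $\chi^2=1$ and $\chi^3=\chi$: thus $\alpha$ acts as the scalar $-1$ on both $P/\gamma_2(P)$ and $\gamma_3(P)$ and trivially on $\gamma_2(P)/\gamma_3(P)$. Hence if $H\le P$ is proper and $\alpha$-invariant with $H\cap\gamma_2(P)\subseteq\gamma_3(P)\,\Phi(H)$, then $\alpha$ acts on the elementary abelian group $H/\Phi(H)$ as the scalar $-1$, a nontrivial power automorphism, and property \ref{it:c} forces $\dG(H)<d$. This hypothesis is met by every $H=\langle x_i:i\in S\rangle\gamma_3(P)$: its Frattini quotient is generated by the images of the $x_i$ (of $\chi$-weight) and of $\gamma_3(P)$ (of $\chi^3=\chi$-weight). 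Applying it to $H=\langle x_1\rangle\times\gamma_3(P)$ gives $1+\dim\gamma_3(P)=\dG(H)<d$, so $\dim\gamma_3(P)\le d-2$, whence $d\ge 3$. Applying it to $H_j:=\langle x_i:i\neq j\rangle\gamma_3(P)$ — which has $\Phi(H_j)=\gamma_2(\langle x_i:i\neq j\rangle)$ because $\gamma_3(P)\le\ZG(P)$, and $\dG(H_j)\ge d-1$ — forces $\dG(H_j)=d-1$ and hence $\gamma_3(P)\subseteq\gamma_2(\langle x_i:i\neq j\rangle)$; as every basis of $P/\gamma_2(P)$ is an $\alpha$-eigenbasis, this yields $\gamma_3(P)\subseteq\gamma_2(H)$ for every proper $\alpha$-invariant $H$ whose image in $P/\gamma_2(P)$ is a hyperplane.

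Turning this last conclusion into a contradiction is, I expect, the main obstacle. The idea is to iterate it along a decreasing chain of hyperplanes, deleting one generator at a time, so as to squeeze $\gamma_3(P)$ into $\gamma_2(\langle x_1\rangle)=1$, contradicting $\gamma_3(P)\neq 1$. The subtlety is that $\gamma_2(\langle x_i:i\in S\rangle)\cap\gamma_3(P)$ need not reduce to the triple commutators of the $x_i\ (i\in S)$: weight-$2$ relations of the $x_i$ modulo $\gamma_3(P)$ may ``resolve'' inside $\gamma_3(P)$. One must therefore run the chain argument together with the dimension bounds $\dim\gamma_3(P)\le d-2$ and $\dim(\gamma_2(P)/\gamma_3(P))\le d-1$ (the latter from $\dim\gamma_2(P)\le d$ by property \ref{it:a}), and with the $\alpha$-invariant centralizer $C:=C_P(\gamma_2(P))$, which contains $\gamma_2(P)$, has nilpotency class at most $2$, and — as $P$ has exponent $p$ — is elementary abelian whenever $[C,C]=1$, so that $\dG(C)=\dim C$ imposes its own restriction via \ref{it:a}. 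The expected outcome of combining these is that at some stage a subgroup is forced either to need more than $d$ generators, contradicting \ref{it:a}, or to contain $\gamma_3(P)$ in a part of its Frattini subgroup too small to accommodate it, forcing $\gamma_3(P)=1$.

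For $q>2$ the three characters
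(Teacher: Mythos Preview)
Your proposal is incomplete in two essential ways. First, it is literally truncated: the case $q>2$ is never addressed (the text ends mid-sentence), and you cannot appeal to \cref{cor:q=2} for it since that corollary is proved \emph{using} the present proposition. Second, and more importantly, your $q=2$ argument is left unfinished. You correctly show that $\gamma_3(P)\subseteq\gamma_2(\langle x_i:i\neq j\rangle)$ for each $j$, but you then acknowledge that the natural iteration---removing generators one at a time so as to force $\gamma_3(P)$ into $\gamma_2(\langle x_1\rangle)=1$---does not go through: at the second step property~\ref{it:c} only yields $\dG(H)<d$, not $\dG(H)\le d-2$, so there is no way to conclude $\gamma_3(P)\subseteq\gamma_2(\langle x_i:i\in S\rangle)$ once $|S|\le d-2$. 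Your final paragraph lists ingredients (the centralizer $C=\Cyc_P(\gamma_2(P))$, the bound $\dim\gamma_3(P)\le d-2$, etc.) and expresses the hope that combining them produces a contradiction, but no argument is actually carried out.

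The paper sidesteps these difficulties by a reduction you do not make: using \cref{remQuot}, one first passes to a quotient in which $|\gamma_3(P)|=p$ and $\gamma_2(P)\cap\ZG(P)=\gamma_3(P)$. Then the non-degenerate pairing $P/C\times\gamma_2(P)/\gamma_3(P)\to\gamma_3(P)$ forces $|C|=p^{d+1}$, so $C$ is non-abelian by property~\ref{it:a}; moreover $C$ has class~$2$ and $\gamma_3(P)\subseteq\gamma_2(C)$ (the latter because $\gamma_2(C)$ is a nontrivial normal subgroup of $P$ inside $\gamma_2(P)$). Since $\gamma_2(P)\le\ZG(C)$, the commutator gives an $A$-equivariant surjection from $\wedge^2(C/\gamma_2(P))$, on which $A$ acts through $\chi^2$, onto $\gamma_2(C)$; comparing with the action through $\chi^3$ on the nontrivial subgroup $\gamma_3(P)\subseteq\gamma_2(C)$ forces $\chi=1$. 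This single weight comparison handles all $q$ at once, and it is exactly the mechanism that your hyperplane-chain approach is missing: once $|\gamma_3(P)|=p$, the centralizer $C$ already delivers the contradiction, and the eigenvector-lifting machinery is unnecessary.
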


\begin{proof}
For a contradiction, assume that $P$ has exponent $p$. If $M$ is a complement of $\gamma_2(P)\cap\ZG(P)$ in $\gamma_3(P)$, then \cref{remQuot} yields that $\overline{P}=P/M$ also belongs to a maximal pair and satisfies $\ZG(\overline{P})=\gamma_2(\overline{P})\cap\ZG(\overline{G})$.
We assume thus, without loss of generality, that $\gamma_2(P)\cap\ZG(P)=\gamma_3(P)$ and, additionally, that $|\gamma_3(P)|=p$. Write $|\gamma_2(P):\gamma_3(P)|=p^m$ and $C=\Cyc_P(\gamma_2(P))$.  From the non-degeneracy of the map $P/C\times \gamma_2(P)/\gamma_3(P)\rightarrow \gamma_3(P)$ we derive that $|P:C|=p^m$. It follows that 
    \[
|C|=\frac{|P|}{p^m}=\frac{|P:\gamma_2(P)|\cdot|\gamma_2(P):\gamma_3(P)|\cdot p}{p^m}=p^{d+1}.
    \]
Not to contradict property \ref{it:a} of maximal pairs the commutator subgroup of $C$ has to be nontrivial and so, being normal in $P$, we derive $\gamma_3(P)\subseteq \gamma_2(C)$. Note now that $\gamma_3(C)\subseteq [C,\gamma_2(P)]=1$ and so $C$ has class $2$. As the commutator map $C\times C\rightarrow \gamma_2(C)$ is bilinear, we conclude that there exist $x,y\in C\setminus\gamma_2(P)$ such that $[x,y]\in \gamma_3(C)$. This is a contradiction to $\chi\neq 1$. 
\end{proof}

\section{The structure of regular pairs}

In the wide world of $p$-groups, the subclass of regular groups is somewhat tamer, sharing, in some sense, a number of properties with abelian groups. In this section we study the effect of assuming regularity on a $p$-group $P$ that belongs to a maximal $(p,q)$-pair $(P,\alpha)$.
Moreover, we use regularity to prove general results on maximal pairs. 

\subsection{Regularity} \label{subsecRegularity}

Let $p$ be a prime number and let $P$ be a finite $p$-group. Then $P$ is said to be \emph{regular} if, for every $x,y\in P$, one has 
\[
(xy)^p\equiv x^py^p\bmod \mho_1(\gamma_2(\gen{x,y})). 
\]
The following lemma collects the properties of regular groups we will make use of. We refer the interested reader to 
\cite[Sec.~III.10]{Hup67} for more on regularity.

\begin{lemma}\label{lem:reg-basic}
    Let $p$ be a prime number and $P$ a finite $p$-group. Let, moreover, $\ell$ and $k$ be non-negative integers and $M$ and $N$ be normal subgroups of $P$. Then the following hold. 
    \begin{enumerate}[label=$(\arabic*)$]
        \item\label{it:reg1} If the class of $P$ is at most $p-1$, then $P$ is regular.
        \item\label{it:reg2} If the exponent of $P$ is $p$, then $P$ is regular.  
        \item\label{it:reg3} If the order of $P$ is smaller than $p^p$, then $P$ is regular.
         \item\label{it:reg7} If $|P:\mho_1(P)|<p^p$, then $P$ is regular.
        \item\label{it:reg4} If $P$ is regular, then $[\mho_\ell(M),\mho_k(N)]=\mho_{\ell+k}([M,N])$.
        \item\label{it:reg5} If $P$ is regular, then $\mho_k(P)=\{x^{p^k}\mid x\in P\}$ and $\Omega_{\ell}(P)=\{x\in P\mid x^{p^\ell}=1\}$.
        \item\label{it:reg6} If $P$ is regular, then $|\mho_k(P)|=|P:\Omega_k(P)|$. 
    \end{enumerate}
\end{lemma}

\begin{proof}
In order, these can be found in Satz~10.2(a)-(d), Satz~10.13, Satz~10.8(a), Satz~10.5, Satz~10.7(a), and Satz~10.13 from \cite[Ch.~III]{Hup67}.
\end{proof}

\begin{definition}
    A maximal $(p,q)$-pair $(P,\alpha)$ is called \emph{regular} if $P$ is regular.
\end{definition}

As \cref{lem:reg-basic} together with the following lemma show, regular pairs are very common among maximal $(p,q)$-pairs.

\begin{lemma}\label{lem:reg}
 Let $(P,\alpha)$ be a maximal $(p,q)$-pair of rank $d$. If $p\geq 2d,$ then $P$ is regular.
\end{lemma}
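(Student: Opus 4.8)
The plan is to bound the nilpotency class of $P$ in terms of $d$ and then invoke \cref{lem:reg-basic}(1), which says that class at most $p-1$ implies regularity. So it suffices to show that, under the hypothesis $p\geq 2d$, the class $c$ of $P$ satisfies $c\leq p-1$, and in fact it will be enough to prove the cruder bound $c\leq 2d-1\leq p-1$. The idea is that each lower central factor $\gamma_i(P)/\gamma_{i+1}(P)$ is nontrivial for $1\leq i\leq c$ and, by \cref{lem:corba}, is a quotient of an elementary abelian group of rank at most $d$ on which $\alpha$ acts through $\chi^i$; moreover \cref{lem:corba}(2) forces $\chi^i=1$ whenever $i\geq 2$ and the factor has full rank $d$.

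First I would record that, since $P$ is nontrivial with $\dG(P)=d$, the top factor $\gamma_1(P)/\gamma_2(P)=P/\Phi(P)$ has rank exactly $d$, and $\chi\neq 1$ by property \ref{it:b} (cf.\ \cref{rmk:pair-char}). Then I would look at the factors $\gamma_i(P)/\gamma_{i+1}(P)$ for $2\leq i\leq c$. The key combinatorial observation is a "counting the generators across the series" argument: $P$ has order $p^n$ with $n=\sum_{i=1}^{c}\dG(\gamma_i(P)/\gamma_{i+1}(P))$, but this alone only gives $n\geq c$, not a bound on $c$. The real leverage comes from the action of $\alpha$: since $\alpha$ has order $q$ dividing $p-1$, the character $\chi$ has some order $e$ dividing $q$, hence $e=q$ (as $\chi\neq 1$ and $q$ is prime), and $\chi^i=1$ exactly when $q\mid i$. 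So for $i$ with $2\leq i\leq c$ and $q\nmid i$, \cref{lem:corba}(2) forces $\dG(\gamma_i(P)/\gamma_{i+1}(P))\leq d-1$.

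The cleanest route, however, is to reduce to the quotient $\overline{P}=P/\gamma_{c}(P)\cdot(\text{something})$ is not quite it; instead I would argue directly via \cref{cor:i+q}. That corollary gives $\mho_1(\gamma_i(P))\subseteq \gamma_{i+q}(P)\gamma_{2i}(P)$. Combined with regularity-free facts — actually here is the cleanest plan: use \cref{lem:basic1}\ref{it:easy1} which says $\Phi(P)=\gamma_2(P)$, so $P/\gamma_2(P)$ has rank $d$ and $\gamma_2(P)=\langle \Phi(P)\rangle$ is generated by $\binom{d}{2}$ commutators and $d$-th powers, but all $p$-th powers of generators lie in $\gamma_2(P)$ already... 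The bound I actually want is: $\gamma_i(P)/\gamma_{i+1}(P)$ is generated, as it is spanned by basic commutators of weight $i$ in $d$ generators, and the point is that for $i\geq 2$ each such factor has rank $\leq d$, so $P$ is a quotient of the free object, and then one shows that if the class exceeded $p-1$ one could find a proper subgroup needing $d$ generators — no, this is circular. Let me commit: I will prove $c\le 2d-1$. Suppose $c\ge 2$. Each of the $c-1$ factors $\gamma_i(P)/\gamma_{i+1}(P)$ with $i\ge 2$ is nontrivial; among the indices $2,\dots,c$, at least $c-1-\lfloor (c-2)/q\rfloor \ge c-1-(c-2)/2 = c/2$ of them are not divisible by $q$, hence have generator number $\le d-1$. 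I would then combine this with the structural fact (from \cref{prop:no-exp-p} and the exponent-$p$-ness of $\gamma_2(P)/\gamma_3(P)$ together with \cref{gamma4}) that forces the series to terminate; the clean statement is $\gamma_{d+1}(P)\subseteq \mho_1(\gamma_2(P))\subseteq \gamma_4(P)$ type telescoping. The main obstacle I anticipate is pinning down the exact numerology so that the resulting class bound is genuinely $\le p-1$ under only $p\ge 2d$; I would resolve it by the generator-count estimate above, which yields $n=\dG(P)+\sum_{i\ge 2}\dG(\gamma_i/\gamma_{i+1})$, hence $d=\dG(P)$ and the proper subgroup $\gamma_2(P)$ (if $c\ge 2$) has $\dG(\gamma_2(P))\le d$ by property \ref{it:a}, while its own lower central factors within $P$ account for $c-1$ nontrivial pieces — forcing $c-1\le d$, i.e.\ $c\le d+1\le 2d\le p$, and a final parity/strictness check via $\chi\ne 1$ removes the last unit to give $c\le p-1$, so $P$ is regular by \cref{lem:reg-basic}(1).

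Given the delicacy of the numerology, I would actually present the argument in its most robust form: show $\gamma_i(P)/\gamma_{i+1}(P)$ has rank $\le d$ for all $i$ (\cref{lem:corba}(1)), so $|P|=p^n$ with $n\le cd$; simultaneously, since $\gamma_2(P)$ is a proper subgroup when $c\ge 2$, property \ref{it:a} gives $\dG(\gamma_2(P))\le d$, and as $\gamma_2(P)/\gamma_3(P)$ already needs up to $d$ generators one gets, via the Burnside basis theorem applied to $\gamma_2(P)$, that the part of the lower central series of $P$ lying inside $\gamma_2(P)$, namely $\gamma_2(P)\supseteq\gamma_3(P)\supseteq\cdots\supseteq\gamma_c(P)\supseteq 1$, cannot have too many strict inclusions; more precisely $\Phi(\gamma_2(P))\supseteq [\gamma_2(P),\gamma_2(P)]\cdot\mho_1(\gamma_2(P))$ and $\mho_1(\gamma_2(P))\subseteq\gamma_4(P)$ by \cref{gamma4}, whence $\gamma_4(P)\gamma_3(P)[\gamma_2,\gamma_2]$-considerations bound the number of factors. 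The hard part will be exactly this bookkeeping step; I expect it closes with $c\le d+2$ or so, which is $\le 2d\le p$ for $d\ge 2$, and $d=1$ is trivial (then $|P|=p<p^p$, regular by \cref{lem:reg-basic}(3)), finishing the proof.
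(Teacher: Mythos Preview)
Your proposal has a genuine gap. You try to bound the nilpotency class $c$ of $P$ in terms of $d$ and then apply \cref{lem:reg-basic}(1). The decisive step you offer is: ``$\gamma_2(P)$ has $\dG(\gamma_2(P))\le d$ by property \ref{it:a}, while its own lower central factors within $P$ account for $c-1$ nontrivial pieces --- forcing $c-1\le d$''. This inference is wrong. Knowing $\dG(\gamma_2(P))\le d$ bounds the rank of $\gamma_2(P)/\Phi(\gamma_2(P))$, not the number of strict inclusions in the chain $\gamma_2(P)\supsetneq\gamma_3(P)\supsetneq\cdots\supsetneq\gamma_c(P)\supsetneq 1$. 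A $p$-group of maximal class already shows the phenomenon: there $\gamma_2(P)$ is $2$-generated but the class is $|P|/p$, as large as you like. None of the other ingredients you list (\cref{lem:corba}, \cref{cor:i+q}, \cref{gamma4}) yields an a priori class bound either; they control the ranks or the $p$-th powers of individual factors, not the length of the series. In fact, bounding the class of $P$ in terms of $d$ alone is precisely one of the open questions the paper poses, so this route cannot be expected to close without new ideas.

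The paper's proof avoids the class entirely. It passes to $\overline P=P/\mho_1(P)$, which has exponent $p$; since $(\overline P,\overline\alpha)$ is again a maximal pair (\cref{remQuot}, using $\mho_1(P)\subseteq\Phi(P)=\gamma_2(P)$ from \cref{lem:basic1}\ref{it:easy1}), \cref{prop:no-exp-p} forces $\overline P$ to have class at most $2$. A short rank count then gives $|P:\mho_1(P)|=|\overline P|\le p^{2d-1}$, and the regularity criterion used is not ``class $\le p-1$'' but rather Huppert's $|P:\mho_1(P)|\le p^{p-1}\Rightarrow P$ regular (Satz~III.10.2(b), covered by the reference in \cref{lem:reg-basic}). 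So the key lemma is \cref{prop:no-exp-p}, and the key regularity criterion is the one on $|P:\mho_1(P)|$, neither of which appears in your plan.
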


\begin{proof}
By \cref{prop:no-exp-p}, the quotient $P/\mho_1(P)$ has class at most $2$ and this implies that $|P:\mho_1(P)|\leq p^{2d-1}.$ Indeed, if the central $\gamma_2(P)$ had order $p^d$, we could easily construct an elementary abelian subgroup containing $\gamma_2(P)$ with index $p$, contradicting property \ref{it:a}. We derive that, if $p\geq 2d,$ then $|P:\mho_1(P)|\leq p^{p-1}$ and $P$ is regular by \cref{lem:reg-basic}\ref{it:reg7}.
\end{proof}

The next lemma is a stronger version of \cref{cor:i+q} for regular pairs. 

\begin{lemma}\label{lem:reg-i+q}
 Let $(P,\alpha)$ be a regular maximal $(p,q)$-pair and let $i>0$ be an integer. Then $\mho_1(\gamma_i(P))\subseteq \gamma_{i+q}(P)\gamma_{4i}(P)$.
\end{lemma}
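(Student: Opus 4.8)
The plan is to refine the proof of \cref{cor:i+q}, exploiting the regularity of $P$ to pass to a quotient in which the $p$-th power map becomes an honest endomorphism of the relevant section, while discarding only $\gamma_{4i}(P)$ rather than $\gamma_{2i}(P)$. Concretely, I set $K=\gamma_{2i+q}(P)\gamma_{4i}(P)$. Since $2i+q\geq 2$ and $4i\geq 2$, the subgroup $K$ is characteristic in $P$ and contained in $\gamma_2(P)\subseteq\Phi(P)$, so by \cref{remDP} the quotient $\overline P=P/K$, equipped with the automorphism induced by $\alpha$, is again a maximal $(p,q)$-pair of rank $d$; being a quotient of the regular group $P$, it is moreover regular. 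I use bar notation for images of subgroups of $P$ and record that $\gamma_{2i+q}(\overline P)=\gamma_{4i}(\overline P)=1$.

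The key step is to show that $p$-th powering is an endomorphism of $H:=\gamma_i(\overline P)$. For $x,y\in H$ one has $\gen{x,y}\subseteq H$, hence $\gamma_2(\gen{x,y})\subseteq[H,H]\subseteq\gamma_{2i}(\overline P)$; regularity of $\overline P$ then yields $(xy)^p\equiv x^py^p\bmod\mho_1(\gamma_{2i}(\overline P))$. But \cref{cor:i+q}, applied to the maximal pair $\overline P$ with the integer $2i$, gives $\mho_1(\gamma_{2i}(\overline P))\subseteq\gamma_{2i+q}(\overline P)\gamma_{4i}(\overline P)=1$, so in fact $(xy)^p=x^py^p$. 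Since $p$-th powers of elements of $H$ again lie in $H$, the map $x\mapsto x^p$ is an endomorphism of the $\alpha$-invariant subgroup $H$ of $\overline P$.

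I may therefore invoke \cref{lem:q-mod} for $\overline P$ with this $H$: for each jump $\ell$ of $\mho_1(H)$ there is a jump $j$ of $H$ in $\overline P$ with $j<\ell$ and $j\equiv\ell\bmod q$. Any jump of $\gamma_i(\overline P)$ in $\overline P$ is at least $i$ (if $j<i$ then $\gamma_i(\overline P)\subseteq\gamma_{j+1}(\overline P)$, so $j$ is not a jump), so $\ell\geq j+q\geq i+q$; hence all jumps of $\mho_1(\gamma_i(\overline P))$ are at least $i+q$ and, as in the proof of \cref{cor:i+q}, $\mho_1(\gamma_i(\overline P))\subseteq\gamma_{i+q}(\overline P)$. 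Pulling this back along $P\to\overline P$, and using $\gamma_{2i+q}(P)\subseteq\gamma_{i+q}(P)$, I obtain $\mho_1(\gamma_i(P))\subseteq\gamma_{i+q}(P)K=\gamma_{i+q}(P)\gamma_{4i}(P)$, as required.

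The main obstacle is the middle step — arranging that $p$-th powering is a genuine endomorphism of $\gamma_i(\overline P)$. This is exactly where the regularity hypothesis is used, and it forces one to keep track of the interplay among $\gamma_{2i}$, $\gamma_{2i+q}$ and $\gamma_{4i}$ inside the quotient: the definition of $K$ is dictated by the need to annihilate $\mho_1(\gamma_{2i}(\overline P))$ while sacrificing no more than $\gamma_{4i}(P)$ in the final inclusion.
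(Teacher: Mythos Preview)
Your proof is correct and follows essentially the same route as the paper's: both pass to a quotient in which $\mho_1(\gamma_{2i})$ vanishes (the paper quotients by $\mho_1(\gamma_{2i}(P))$ itself, you by the slightly larger $\gamma_{2i+q}(P)\gamma_{4i}(P)$), use regularity to make $p$-th powering an endomorphism of $\gamma_i$, apply \cref{lem:q-mod}, and invoke \cref{cor:i+q} to control the discarded piece. The only difference is cosmetic---whether \cref{cor:i+q} is applied in $P$ at the end or in the quotient at the start.
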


\begin{proof}
    Thanks to the regularity assumption, the $p$-th powering map induces an endomorphism on $\gamma_i(P)/\mho_1(\gamma_{2i}(P))$. From \cref{lem:q-mod} and \cref{cor:i+q} we conclude that $\mho_1(\gamma_i(P)) \subseteq \gamma_{i+q}(P)\mho_1(\gamma_{2i}(P))\subseteq \gamma_{i+q}(P)\gamma_{4i}(P)$.
\end{proof}

\begin{proof}[Proof of Theorem \ref{th:q=2}]
We assume that $P$ has class at least $3$ and show that $q=2$. As a consequence of \cref{remQuot}, we assume without loss of generality that $P$ has class $3$. If $p=3$, we have that $q=2$ so we assume, additionally, that $p>3$. Then, by \cref{lem:reg-basic}\ref{it:reg1}, the group $P$ is regular.
Applying \cref{prop:no-exp-p} and \cref{lem:reg-i+q} with $i=1$, we 
obtain that $\{1\}\neq\mho_1(P) \subseteq \gamma_{q+1}(P)$. In particular, $q+1 \leq 3$ and so $q=2$.
\end{proof}

The derived length of odd order $d$-maximal groups is at most $3$.
The following restriction on their order follows.

\begin{proposition}
Let $G$ be a $d$-maximal group of odd order.
If $p$ is a prime and $G$ is a $p$-group, then $|G| \leq p^{2d-1}$.
Otherwise, there exist distinct primes $p$ and $q$ and integers $n\leq 2d-3$ and $t\geq 1$ such that $|G|=p^n q^t$.
\end{proposition}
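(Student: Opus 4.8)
The plan is to treat separately the two cases already appearing in the statement. Suppose first that $G$ is a $p$-group; then $p$ is odd because $|G|$ is odd, and I would invoke Laffey's theorem recalled in the introduction (\cite{Laf73}): such a $G$ has class at most $2$ and $\Phi(G)=\gamma_2(G)$ has exponent $p$. Being central of exponent $p$, the subgroup $\Phi(G)$ is elementary abelian, so $|\Phi(G)|=p^{\dG(\Phi(G))}$; and since $\Phi(G)$ is a proper subgroup of the $d$-maximal group $G$, we get $\dG(\Phi(G))\leq d-1$. Together with $|G:\Phi(G)|=p^d$ this yields $|G|\leq p^{2d-1}$.

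Now assume $G$ is not a $p$-group. By \cref{thChar} (equivalently \cref{propGen}) we may write $G\cong P\rtimes\gen{\beta}$, where $(P,\alpha)$ is a maximal $(p,q)$-pair of rank $d-1$ and $\gen{\beta}$ is a cyclic $q$-group, say of order $q^t$ with $t\geq 1$. Hence $|G|=|P|\cdot q^t$, and writing $|P|=p^n$ reduces the whole claim to proving $n\leq 2d-3$. Since $|G|$ is odd, $q$ is odd; in particular $q>2$, so \cref{cor:q=2} tells us that $P$ has class at most $2$. Also $d\geq 2$, since a $1$-maximal group is cyclic of prime order, hence a $p$-group. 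If $P$ is abelian, then $\Phi(P)=\gamma_2(P)=1$ by \cref{lem:basic1}\ref{it:easy1}, so $P$ is elementary abelian of rank $d-1$ and $n=d-1\leq 2d-3$; I would then concentrate on the case where $P$ has class exactly $2$.

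In that case $\Phi(P)=\gamma_2(P)$ is a nontrivial proper subgroup which, by \cref{lem:basic1}, is central and elementary abelian, so $|\gamma_2(P)|=p^{\dG(\gamma_2(P))}$ and $n=(d-1)+\dG(\gamma_2(P))$. The naive bound $\dG(\gamma_2(P))\leq d-1$ would only give $n\leq 2d-2$; the point — and the step I expect to be the main obstacle — is to rule out equality, using property \ref{it:c} of the maximal pair together with the oddness of $q$. Indeed, if $\dG(\gamma_2(P))=d-1=\dG(P)$, then $\gamma_2(P)$ is a proper subgroup of $P$ of full rank; it is $\alpha$-invariant (being characteristic), with $\Phi(\gamma_2(P))=1$, and by \cref{lem:basic1}\ref{it:easy3} the automorphism $\alpha$ acts on $\gamma_2(P)=\gamma_2(P)/\gamma_3(P)$ through $\chi^2$. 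Here $\chi(\alpha)$ has order $q$ in $\F_p^\times$ by property \ref{it:b}, and since $q$ is odd, $\chi(\alpha)^2$ again has order $q>1$; thus $\alpha$ acts on $\gamma_2(P)/\Phi(\gamma_2(P))$ as a nontrivial power automorphism, contradicting property \ref{it:c}. Therefore $\dG(\gamma_2(P))\leq d-2$ and $n=(d-1)+\dG(\gamma_2(P))\leq 2d-3$, as desired.

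Apart from this observation — which is exactly where the hypothesis $q\neq 2$ is used a second time, independently of its use through \cref{cor:q=2} — the argument is routine bookkeeping with the structural results \cref{thChar} and \cref{cor:q=2}, the characterization of power automorphisms on elementary abelian groups, and the elementary-abelianness of the lower central factors from \cref{lem:basic1}.
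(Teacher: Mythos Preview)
Your proof is correct and follows essentially the same route as the paper. The only cosmetic difference is that where the paper invokes \cref{lem:corba} to conclude $\dG(\gamma_2(P))\leq d-2$ from $\chi^2\neq 1$, you unfold that lemma's content directly via property \ref{it:c}; the underlying argument is identical.
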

\begin{proof}
If $G$ is a $p$-group, then the class of $G$ is at most $2$, and, $\gamma_2(G)$ being elementary abelian, $|G| \leq p^{2d-1}$ follows.
Otherwise, let $(P,\alpha)$ be as in \cref{thChar}.
From Theorem \ref{th:q=2}, we know that the class of $P$ is at most $2$. Now, the number $q$ being odd, the equality $|P|=|P:\gamma_2(P)||\gamma_2(P)|$ together with Lemma \ref{lem:corba} provides $n \leq d-1+d-2$, as desired.
\end{proof}

\subsection{Regular pairs}\label{sec:reg}

We now focus exclusively on regular pairs. Because of this,
until the end of this section, let $(P,\alpha)$ be a maximal regular $(p,q)$-pair of rank $d$. The results proven here are not only interesting for their own sake, but will be also applied in the study of maximal pairs of small rank.

\begin{lemma}\label{cor:class3P^p}
    Assume that $P$ has class $3$. Then $\mho_1(P)=\gamma_3(P)$. 
\end{lemma}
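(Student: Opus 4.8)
The plan is to prove the equality by two inclusions, the first routine and the second carrying the real content. First I would record that, since $P$ has class exactly $3$, we have $\gamma_4(P)=1$ and, by \cref{cor:q=2} (applicable because the class is at least $3$), necessarily $q=2$. Then \cref{lem:reg-i+q} applied with $i=1$ gives $\mho_1(P)=\mho_1(\gamma_1(P))\subseteq\gamma_{1+q}(P)\gamma_4(P)=\gamma_3(P)$; in particular $\gamma_3(P)\mho_1(P)=\gamma_3(P)$. (Alternatively one can combine \cref{cor:i+q} with \cref{gamma4}, but the regular estimate is the most direct route.)

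The heart of the argument is the reverse inclusion $\gamma_3(P)\subseteq\mho_1(P)$, which I would obtain by showing that $\overline P=P/\mho_1(P)$ has class at most $2$. The subgroup $\mho_1(P)$ is characteristic in $P$, hence $\alpha$-invariant, and lies in $\Phi(P)=\gamma_2(P)$ by \cref{lem:basic1}, so \cref{remQuot} ensures that $\overline P$, with the automorphism induced by $\alpha$, is again a maximal $(p,q)$-pair; being a quotient of $P$ it also has class at most $3$. On the other hand $\mho_1(\overline P)=1$ by the general identity $\mho_1(G/N)=\mho_1(G)N/N$, so $\overline P$ has exponent dividing $p$, and since $\mho_1(P)\subseteq\gamma_2(P)\subsetneq P$ the group $\overline P$ is non-trivial, hence has exponent exactly $p$. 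If $\overline P$ had class exactly $3$ this would contradict \cref{prop:no-exp-p}, so $\overline P$ has class at most $2$, i.e.\ $\gamma_3(\overline P)=1$. Since $\gamma_3(\overline P)=\gamma_3(P)\mho_1(P)/\mho_1(P)=\gamma_3(P)/\mho_1(P)$ by the first inclusion, this gives $\gamma_3(P)=\mho_1(P)$, as wanted.

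I do not anticipate a genuine obstacle: each direction is essentially a one-line deduction once \cref{lem:reg-i+q} and \cref{prop:no-exp-p} are in hand. The points that need a little care are verifying that $\overline P$ is non-trivial (so that ``exponent dividing $p$'' really yields exponent $p$) and that all hypotheses of \cref{remQuot} are met, and noting that one must bound the class of $\overline P$ by $3$ first (it being a quotient of $P$) before invoking \cref{prop:no-exp-p}, which only excludes class exactly $3$. It is also worth observing that \cref{prop:no-exp-p} holds for arbitrary maximal pairs, not just regular ones; regularity enters only through \cref{lem:reg-i+q} in the first, easy inclusion.
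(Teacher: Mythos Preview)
Your proof is correct and follows essentially the same route as the paper's: establish $\mho_1(P)\subseteq\gamma_3(P)$ via \cref{cor:q=2} and \cref{lem:reg-i+q}, then pass to $P/\mho_1(P)$ (using \cref{remQuot}) and invoke \cref{prop:no-exp-p} to rule out class $3$ in the quotient. The paper's version is more compressed and also notes $\mho_1(P)\neq 1$ directly from \cref{prop:no-exp-p} applied to $P$ itself, but this is subsumed by your quotient argument; your additional remarks about non-triviality of $\overline{P}$ are harmless over-caution, since a group of class exactly $3$ is automatically non-trivial.
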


\begin{proof}
    Thanks to \cref{th:q=2} and \cref{lem:reg-i+q}, we know that $\mho_1(P)$ is contained in $\gamma_3(P)$ and, by \cref{prop:no-exp-p}, that $\mho_1(P)\neq 1$. If $\mho_1(P)$ were properly contained in $\gamma_3(P)$, modding out by $\mho_1(P)$ would contradict \cref{prop:no-exp-p}, so we conclude that $\gamma_3(P)=\mho_1(P)$.
\end{proof}

\begin{lemma}\label{lem:c-reg}
    Let $c\geq 3$ be the class of $P$. Then $\mho_1(\gamma_{c-2}(P))=\gamma_c(P)$. 
\end{lemma}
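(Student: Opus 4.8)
The plan is to establish the slightly more flexible identity
\[
\gamma_{i+2}(P)=\mho_1(\gamma_i(P))\,\gamma_{i+3}(P)
\]
for every integer $i\geq 1$, and then specialize to $i=c-2$: since $\gamma_{c+1}(P)=1$, this immediately gives $\gamma_c(P)=\mho_1(\gamma_{c-2}(P))$, which is the assertion. The point of carrying the auxiliary factor $\gamma_{i+3}(P)$ is that it keeps the induction within reach of \cref{cor:class3P^p}: at each step one only needs information about $\mho_1$ ``in the top three layers'' of a stretch of the lower central series, a regime controlled by a class-$3$ quotient of $P$.

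I would argue by induction on $i$. For $i=1$, pass to $\overline{P}=P/\gamma_4(P)$. The subgroup $\gamma_4(P)$ is characteristic, hence $\alpha$-invariant, and is contained in $\Phi(P)=\gamma_2(P)$, so by \cref{remQuot}(1) the pair $(\overline{P},\overline{\alpha})$ is again a maximal $(p,q)$-pair, and it is regular as a quotient of the regular group $P$ (see \cite[Ch.~III]{Hup67}). As $c\geq 3$, the group $\overline{P}$ has class exactly $3$, so \cref{cor:class3P^p} gives $\mho_1(\overline{P})=\gamma_3(\overline{P})$, which unwinds to $\mho_1(P)\,\gamma_4(P)=\gamma_3(P)$: the desired identity for $i=1$.

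For the inductive step, assume $\gamma_{i+1}(P)=\mho_1(\gamma_{i-1}(P))\,\gamma_{i+2}(P)$ for some $i\geq 2$. Taking commutators with $P$, using $[AB,P]=[A,P][B,P]$ for normal subgroups $A,B$ of $P$ together with $[\gamma_{i+2}(P),P]=\gamma_{i+3}(P)$, one obtains $\gamma_{i+2}(P)=[\mho_1(\gamma_{i-1}(P)),P]\,\gamma_{i+3}(P)$. Now regularity enters: applying \cref{lem:reg-basic}(4) with $M=\gamma_{i-1}(P)$, $N=P$, $\ell=1$ and $k=0$ turns $[\mho_1(\gamma_{i-1}(P)),P]$ into $\mho_1([\gamma_{i-1}(P),P])=\mho_1(\gamma_i(P))$, which closes the induction. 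I expect the identity $[\mho_1(\gamma_{i-1}(P)),P]=\mho_1(\gamma_i(P))$ — pushing a $\mho$-subgroup through a commutator with the whole group — to be the real engine of the argument; once it is available the rest is routine commutator bookkeeping, and the only fine point is checking that $\overline{P}$ in the base case genuinely has class $3$ (not less), which uses nothing beyond $c\geq 3$.
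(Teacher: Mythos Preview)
Your proof is correct and follows essentially the same approach as the paper's: both arguments reduce to \cref{cor:class3P^p} for the base case and use the regular identity $[\mho_1(M),P]=\mho_1([M,P])$ from \cref{lem:reg-basic}(4) to drive the inductive step. The only cosmetic difference is the bookkeeping---the paper inducts on the class $c$ (passing to $P/\gamma_c(P)$ at each stage), whereas you induct on $i$ inside the fixed group $P$; unwinding either induction yields exactly your auxiliary identity $\gamma_{i+2}(P)=\mho_1(\gamma_i(P))\gamma_{i+3}(P)$.
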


\begin{proof}
    We work by induction on $c$ and note that the case $c=3$ is given by \cref{cor:class3P^p}. Assume now that $c>3$ and that the result holds for $c-1$, in other words that $\gamma_{c-1}(P)=\mho_1(\gamma_{c-3}(P))\gamma_c(P)$. The subgroup $\gamma_c(P)$ being central, \cref{lem:reg-basic}\ref{it:reg4} yields the following:
    \begin{align*}
    \mho_1(\gamma_{c-2}(P))&=\mho_1([P,\gamma_{c-3}(P)])=[P,\mho_1(\gamma_{c-3}(P))] \\ &=[P,\mho_1(\gamma_{c-3}(P))\gamma_c(P)]=[P,\gamma_{c-1}(P)]=\gamma_c(P).
    \end{align*}
    This concludes the proof.
\end{proof}

\begin{proposition}\label{prop:reg-p}
    Assume the class of $P$ is at least $3$. Then $\mho_1(P)=\gamma_3(P)$.
\end{proposition}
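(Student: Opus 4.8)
The plan is to argue by induction on the nilpotency class $c$ of $P$. The base case $c=3$ is exactly \cref{cor:class3P^p}, so suppose $c\geq 4$ and that the statement is already known for every regular maximal $(p,q)$-pair of class $c-1$. I would pass to the quotient $\overline{P}=P/\gamma_c(P)$, equipped with the automorphism $\overline{\alpha}$ induced by $\alpha$. Since $\gamma_c(P)$ is characteristic (hence $\alpha$-invariant), normal, and contained in $\Phi(P)=\gamma_2(P)$ by \cref{lem:basic1}\ref{it:easy1}, \cref{remQuot} guarantees that $(\overline{P},\overline{\alpha})$ is again a maximal $(p,q)$-pair of rank $d$; it is regular because quotients of regular $p$-groups are regular (see \cite[Sec.~III.10]{Hup67}). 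Finally $\gamma_c(P)\neq 1$ forces $\gamma_{c-1}(P)\neq\gamma_c(P)$, so the class of $\overline{P}$ is exactly $c-1\geq 3$, and the induction hypothesis applies to $(\overline{P},\overline{\alpha})$.

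Applying it yields $\mho_1(\overline{P})=\gamma_3(\overline{P})$. Because $c>3$ we have $\gamma_c(P)\subseteq\gamma_3(P)$, hence $\gamma_3(\overline{P})=\gamma_3(P)/\gamma_c(P)$ and $\mho_1(\overline{P})=\mho_1(P)\gamma_c(P)/\gamma_c(P)$; reading the equality back in $P$ gives $\mho_1(P)\gamma_c(P)=\gamma_3(P)$. It then remains to absorb the factor $\gamma_c(P)$, and this is exactly where \cref{lem:c-reg} comes in: by that lemma $\gamma_c(P)=\mho_1(\gamma_{c-2}(P))\subseteq\mho_1(P)$. Therefore $\gamma_3(P)=\mho_1(P)\gamma_c(P)=\mho_1(P)$, which completes the inductive step.

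I do not foresee a genuine obstacle here: the statement is essentially \cref{cor:class3P^p} with the class of $P$ allowed to be any value $\geq 3$. The one place to be careful is checking that $(\overline{P},\overline{\alpha})$ really inherits all the defining properties of a maximal $(p,q)$-pair — that $\overline{\alpha}$ still has order $q$ and induces a non-trivial power automorphism on $\overline{P}/\Phi(\overline{P})$, and that property~\ref{it:c} survives — but this is precisely the content of \cref{remQuot}, so it causes no trouble. As a cross-check, one can also avoid the induction by establishing the two inclusions directly: $\mho_1(P)\subseteq\gamma_3(P)$ follows from \cref{lem:reg-i+q} with $i=1$, since $\gamma_{1+q}(P)\gamma_4(P)\subseteq\gamma_3(P)$; and $\gamma_3(P)\subseteq\mho_1(P)$ follows by passing to $\overline{P}=P/\mho_1(P)$, which has exponent $p$ and is a maximal pair by \cref{remQuot}, and which must therefore have class at most $2$, as otherwise $\overline{P}/\gamma_4(\overline{P})$ would be a maximal pair of class $3$ and exponent $p$, contradicting \cref{prop:no-exp-p}.
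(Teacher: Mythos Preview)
Your inductive argument is correct and follows the same route as the paper: induction on the class $c$ with base case \cref{cor:class3P^p}, inductive step via the quotient $P/\gamma_c(P)$, and then absorbing $\gamma_c(P)$ into $\mho_1(P)$ using \cref{lem:c-reg}. Your finishing step is in fact slightly cleaner than the paper's, which first reduces to $|\gamma_c(P)|=p$, assumes $\gamma_c(P)\not\subseteq\mho_1(P)$ for a contradiction, and then essentially re-derives \cref{lem:c-reg} in place; your direct citation of that lemma avoids this detour, and your alternative non-inductive cross-check via \cref{prop:no-exp-p} is also valid.
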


\begin{proof}
    Let $c$ denote the class of $P$: we work by induction on $c$. The base of the induction is given by \cref{cor:class3P^p} so we assume that the result holds for $c-1$, i.e.\ that $\gamma_3(P)=\mho_1(P)\gamma_c(P)$.  We assume also, without loss of generality that $|\gamma_c(P)|=p$ and, for a contradiction, that $\gamma_c(P)$ is not contained in $\mho_1(P)$, i.e.\ that $\mho_1(P)\cap\gamma_c(P)=1$. It follows from \cref{th:q=2} and \cref{lem:reg-i+q} that $\mho_1(\gamma_{c-2}(P))=1$. 
    However, the subgroup $\gamma_c(P)$ being central,  \cref{lem:reg-basic}\ref{it:reg4} and \cref{lem:c-reg} yield
    \begin{align*}
    \{1\}=\mho_1(\gamma_{c-2}(P))& =\mho_1([P,\gamma_{c-3}(P)])=[P,\mho_1(\gamma_{c-3}(P))] \\ &=[P,\mho_1(\gamma_{c-3}(P))\gamma_c(P)]=[P,\gamma_{c-1}(P)]=\gamma_c(P).
    \end{align*}
    Contradiction.
\end{proof}

\begin{corollary}\label{cor:reg-i}
    Assume the class of $P$ is at least $3$ and let $i$ and $j$ be positive integers. Then the following hold: 
    \begin{enumerate}[label=$(\arabic*)$]
        \item\label{it:corr1} $\mho_1(\gamma_i(P))=\gamma_{i+2}(P)$.
        \item\label{it:corr2} If at least one of $i$ and $j$ is odd, then $[\gamma_i(P),\gamma_j(P)]=\gamma_{i+j}(P)$.
        \item\label{it:corr3} If $i=2k+1$, then $|\gamma_{i}(P):\gamma_{i+2}(P)|\leq p^d$. 
    \end{enumerate}
\end{corollary}

\begin{proof}
 \ref{it:corr1}   We work by induction on $i$ and note that the claim holds for $i=1$ thanks to \cref{prop:reg-p}. Assume now that $i>1$ and that $\mho_1(\gamma_{i-1}(P))=\gamma_{i+1}(P)$. It follows from  \cref{lem:reg-basic}\ref{it:reg4} that 
    \[
    \mho_1(\gamma_i(P))=\mho_1([P,\gamma_{i-1}(P)])=[P,\mho_1(\gamma_{i-1}(P))]=[P,\gamma_{i+1}(P)]=\gamma_{i+2}(P).
    \]
\ref{it:corr2} Without loss of generality assume that $i$ is odd and write $i=2k+1$. It follows from \cref{cor:reg-i} and \cref{lem:reg-basic}\ref{it:reg4} that 
    \[
    [\gamma_i(P),\gamma_j(P)]=[\mho_k(P),\gamma_j(P)]=\mho_k([P,\gamma_j(P)])=\mho_k(\gamma_{j+1}(P))=\gamma_{j+1+2k}(P)=\gamma_{i+j}(P).
    \]
\ref{it:corr3} Since $i>1$, Point \ref{it:corr1} yields that $\gamma_{i}(P)/\gamma_{i+2}(P)$ is elementary abelian. Not to contradict property \ref{it:a}, the number of generators of the last quotient is at most $d$.
\end{proof}

\begin{lemma}\label{lem:odd-stop}
Let $a$ be a positive integer and assume that  $|\gamma_{1+2a}(P):\gamma_{2+2a}(P)|= p$. Then the class of $P$ is $1+2a$.
\end{lemma}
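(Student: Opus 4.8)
The plan is to assume the class $c$ of $P$ satisfies $c>1+2a$ and derive a contradiction. First note that the hypothesis already gives $\gamma_{1+2a}(P)\neq\gamma_{2+2a}(P)$, hence $\gamma_{1+2a}(P)\neq 1$ and $c\geq 1+2a\geq 3$, so \cref{cor:q=2} forces $q=2$. In particular $\chi(\alpha)=-1$, and by \cref{lem:basic1} the automorphism $\alpha$ acts on each $\gamma_i(P)/\gamma_{i+1}(P)$ as multiplication by $(-1)^i$: it inverts the odd-indexed quotients and centralises the even-indexed ones. Next I would put $P$ into a convenient normal form. Since $\gamma_{3+2a}(P)$ is characteristic, hence $\alpha$-invariant, and contained in $\Phi(P)=\gamma_2(P)$, \cref{remQuot} lets me replace $(P,\alpha)$ by $(P/\gamma_{3+2a}(P),\bar\alpha)$; as $c\geq 2+2a$, the new group has class exactly $2+2a$ and still satisfies $|\gamma_{1+2a}:\gamma_{2+2a}|=p$. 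Then, since $\alpha$ is trivial on the last, even-indexed term $\gamma_{2+2a}(P)=\gamma_c(P)$, every subgroup of $\gamma_c(P)$ is $\alpha$-invariant and central; quotienting by one of index $p$ via \cref{remQuot} again, I may assume $|\gamma_c(P)|=p$ and $c=2+2a$.

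In this situation $\mho_1(\gamma_{c-1}(P))=\gamma_{c+1}(P)=1$ by \cref{cor:reg-i} and $[\gamma_{c-1}(P),\gamma_{c-1}(P)]\leq\gamma_{2c-2}(P)=1$, so $Y:=\gamma_{c-1}(P)=\gamma_{1+2a}(P)$ is elementary abelian of order $p^2$, with $\gamma_c(P)=[Y,P]$ and $Y\cap\ZG(P)=\gamma_c(P)$. Consider $C:=\mathrm{C}_P(Y)$. Its image in $\Aut(Y)\cong\GL_2(\F_p)$ is a $p$-group, hence has order dividing $p$, and is non-trivial because $Y\not\leq\ZG(P)$; so $C$ is a maximal subgroup of $P$. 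It is characteristic, hence $\alpha$-invariant, and contains $\Phi(P)=\gamma_2(P)$ because $[Y,\gamma_2(P)]\leq\gamma_{c+1}(P)=1$. The plan is to contradict property \ref{it:c} of the maximal pair with the subgroup $C$: I want to show that $\dG(C)=d$ and that the induced action of $\alpha$ on $C/\Phi(C)$ is the inversion map — a non-trivial power automorphism — which, together with $\alpha(C)=C$ and $C\subsetneq P$, is exactly what \ref{it:c} forbids, forcing $c=2+2a$ to be impossible and hence the class of $P$ to be $1+2a$.

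The hard part is the last step: pinning down $\Phi(C)=\mho_1(C)\gamma_2(C)$ inside $\Phi(P)=\gamma_2(P)$ and computing the $\alpha$-action on $\gamma_2(P)/\Phi(C)$. Since $C\supseteq\gamma_2(P)$ and $\mho_1(C)\leq\mho_1(P)=\gamma_3(P)$ by \cref{cor:reg-i}, we have $\Phi(C)\leq\gamma_2(P)$, so $C/\Phi(C)$ surjects onto the $\alpha$-invariant hyperplane $C/\gamma_2(P)$ of $P/\Phi(P)$, on which $\alpha$ acts as $-\mathrm{id}$ by property \ref{it:b}; thus $\dG(C)=(d-1)+\dim_{\F_p}\!\big(\gamma_2(P)/\Phi(C)\big)$, and property \ref{it:a} already forces $|\gamma_2(P):\Phi(C)|\leq p$. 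It remains to exclude the two degenerate cases $\Phi(C)=\gamma_2(P)$ (which would give $\dG(C)=d-1$) and $\alpha$ acting as $+\mathrm{id}$ on the one-dimensional quotient $\gamma_2(P)/\Phi(C)$ (which would make $\alpha$ act with mixed signs on $C/\Phi(C)$, so not as a power automorphism). Here the inputs are the precise location of $\gamma_2(C)=[C,C]$ relative to $\gamma_3(P)$ and $\gamma_4(P)$ together with the parity of the $\alpha$-action at each level, and I would expect to close both cases either by another use of the tight power laws $\mho_1(\gamma_i(P))=\gamma_{i+2}(P)$ and $\mho_1(\gamma_{c-2}(P))=\gamma_c(P)$ from \cref{cor:reg-i} and \cref{lem:c-reg}, or, when $C$ generates too much of $\Phi(P)$, by exhibiting inside $C$ an elementary abelian subgroup of rank $d+1$, contradicting \ref{it:a}. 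This bookkeeping with $\Phi(C)$ and the sign of $\alpha$ on $\gamma_2(P)/\Phi(C)$ is where I anticipate essentially all of the difficulty.
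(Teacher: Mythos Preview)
Your reduction to class $c=2+2a$ with $|\gamma_c(P)|=p$ is correct, and your choice of the subgroup $C=\Cyc_P(\gamma_{c-1}(P))$ is natural. However, the argument is, as you yourself flag, incomplete: the entire contradiction hinges on showing that $\alpha$ acts as $-1$ on the one-dimensional piece $\gamma_2(P)/\Phi(C)$, and you have not done this. There is a concrete obstruction. Since $\alpha$ acts as $+1$ on $\gamma_2(P)/\gamma_3(P)$, you would need $\gamma_3(P)\not\subseteq\Phi(C)$ and the surviving direction of $\gamma_2(P)$ modulo $\Phi(C)$ to come from an \emph{odd} layer. But $\Phi(C)\supseteq[C,\gamma_2(P)]\cdot\mho_1(\gamma_2(P))=[C,\gamma_2(P)]\gamma_4(P)$, and nothing in your toolkit prevents $[C,C]$ from covering $\gamma_2(P)/\gamma_3(P)$ while leaving the quotient $\gamma_2(P)/\Phi(C)$ sitting inside the $+1$-eigenspace. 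The tools you propose (\cref{cor:reg-i}, \cref{lem:c-reg}) give information about $\mho_1$ of the $\gamma_i$'s, not about $[C,C]$ or $\mho_1(C)$ individually, so it is far from clear that this case analysis can be closed.

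The paper takes a completely different and much shorter route that avoids property~\ref{it:c} entirely. Using $\mho_a(\gamma_i(P))=\gamma_{i+2a}(P)$ (iterated \cref{cor:reg-i}) together with the regular-group identity $|\Omega_a(H)|=|H:\mho_a(H)|$, one computes directly that $|P:\gamma_2(P)\Omega_a(P)|=|\gamma_{1+2a}(P):\gamma_{2+2a}(P)|=p$. Since $P/(\gamma_2(P)\Omega_a(P))$ is then cyclic, $[P,P]=[P,\gamma_2(P)\Omega_a(P)]$, and the regularity law $\mho_a([M,N])=[M,\mho_a(N)]$ from \cref{lem:reg-basic} gives
\[
\gamma_{2+2a}(P)=\mho_a([P,P])=[P,\mho_a(\gamma_2(P)\Omega_a(P))]=[P,\mho_a(\gamma_2(P))]=[P,\gamma_{2+2a}(P)]=\gamma_{3+2a}(P),
\]
so $\gamma_{2+2a}(P)=1$. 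No reduction, no analysis of $\Phi(C)$, no appeal to property~\ref{it:c}. It is worth noting that your subgroup $C$ actually \emph{coincides} with the paper's $\gamma_2(P)\Omega_a(P)$: indeed $[\Omega_a(P),\mho_a(P)]=\mho_a([\Omega_a(P),P])\leq\mho_a(\Omega_a(\gamma_2(P)))=1$, so $\Omega_a(P)\leq C$, and both subgroups have index $p$. The difference is entirely in how the subgroup is used: the paper exploits it through the power-commutator calculus of regular groups rather than through the maximal-pair axioms.
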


\begin{proof}
As a consequence of \cref{cor:reg-i},  for any index $i$, one has $\mho_{a}(\gamma_i(P))=\gamma_{i+2a}(P)$. 
Moreover, $P$ being regular, we have $|\Omega_a(\gamma_i(P)|=|\gamma_i(P):\mho_a(\gamma_i(P))|=|\gamma_i(P):\gamma_{i+2a}(P)|.$
In particular, we derive
$$\frac{|P|}{|\gamma_2(P)\Omega_a(P)|}=\frac{|P|\cdot|\Omega_a(\gamma_2(P))|}{|\gamma_2(P)|\cdot|\Omega_a(P)|}=\frac{|P|\cdot|\gamma_2(P)|\cdot|\gamma_{1+2a}(P)|}{|\gamma_2(P)|\cdot|P|\cdot|\gamma_{2+2a}(P)|}=\frac{|\gamma_{1+2a}(P)|}{|\gamma_{2+2a}(P)|}=p.
$$
 It follows from \cref{cor:reg-i} and \cref{lem:reg-basic}\ref{it:reg4} that 
  \begin{align*}
	\gamma_{2+2a}(P)&=\mho_a(\gamma_2(P))=\mho_a([P,P])=\mho_a([P,\gamma_2(P)\Omega_a(P)])\\ 
	&=[P,\mho_a(\gamma_2(P))]=[P,\gamma_{2+2a}(P)]=\gamma_{3+2a}(P)
\end{align*}
and thus $\gamma_{2+2a}(P)=1$.
\end{proof}

\section{Maximal pairs of rank $2$}\label{sec5}

In this section, we classify the maximal $(p,q)$-pairs of rank $2$ and, as a consequence, the finite $3$-maximal groups. To this end, until the end of \cref{sec5}, let  $(P,\alpha)$ be a maximal $(p,q)$-pair of rank $2$. 

\begin{proposition}\label{prop:d2max}
The group $P$ has maximal class. 
\end{proposition}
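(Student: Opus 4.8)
The goal is to show that a rank-$2$ maximal $(p,q)$-pair $(P,\alpha)$ has maximal class, i.e.\ every consecutive lower central quotient $\gamma_i(P)/\gamma_{i+1}(P)$ for $i\geq 2$ has order $p$. I will proceed by a uniform argument that plays property \ref{it:a} (no subgroup needs more than $\dG(P)=2$ generators) against the action through characters.

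First I would set up notation: $A=\gen{\alpha}$ acts on $P/\Phi(P)=P/\gamma_2(P)$ (equality by \cref{lem:basic1}\ref{it:easy1}) through a non-trivial character $\chi$, and, since $\dG(P)=2$, the quotient $P/\gamma_2(P)$ is elementary abelian of rank exactly $2$. By \cref{lem:basic1}\ref{it:easy2}–\ref{it:easy3}, each $\gamma_i(P)/\gamma_{i+1}(P)$ is elementary abelian with $A$ acting through $\chi^i$, and by \cref{lem:corba}(1) it has rank at most $2$. The point of the proof is to exclude the possibility that some $\gamma_i(P)/\gamma_{i+1}(P)$ with $i\geq 2$ has rank exactly $2$.

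The main step is the rank-$2$ exclusion. Suppose $i\geq 2$ is minimal with $\dG(\gamma_i(P)/\gamma_{i+1}(P))=2$. By \cref{lem:corba}(2) this forces $\chi^i=1$, hence (as $\chi$ has order $q$ dividing $p-1$ and $\chi\neq 1$) $q\mid i$, so in particular $i\geq q\geq 2$; and by \cref{cor:q=2}, if $q>2$ then $P$ has class at most $2$, making $i\geq 2$ with $\gamma_i(P)/\gamma_{i+1}(P)$ of rank $2$ impossible unless $i=2$ and $P$ is nonabelian of class $2$. So there are really two regimes to handle: $(a)$ $q>2$, where $P$ has class $\leq 2$ and I must rule out $\dG(\gamma_2(P))=2$; $(b)$ $q=2$, where $i$ can be any even number $\geq 2$. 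In case $(a)$: if $\dG(\gamma_2(P))=2$ and $P$ has class $2$, then $P/\gamma_3(P)=P$ has $\gamma_2$ elementary abelian of rank $2$ with $P/\gamma_2(P)$ of rank $2$; I would build a subgroup needing $3$ generators by taking a preimage setup — pick $x,y$ generating $P$ mod $\gamma_2$, note $\gamma_2(P)=\gen{[x,y]}\cdot(\text{central part})$, and since $\dG(\gamma_2)=2$ there is a central direct factor $Z\cong C_p$ of $\gamma_2(P)$ independent of $[x,y]$; then $\gen{x^p,y^p,Z,[x,y]}$ or a suitably chosen abelian/elementary subgroup of rank $3$ contradicts \ref{it:a}. (Concretely, in a class-$2$ group $\gen{x,y}$ with $[x,y]$ of order $p$ and exponent $p$ one already sees the whole group has order $\le p^3$; to get $\dG(\gamma_2)=2$ one needs an extra central generator, and then $\gen{x}\times Z$ together with that generator, or $Z\times\Omega_1(\ZG(P))$, is a rank-$3$ elementary abelian subgroup.) In case $(b)$, $q=2$, $\chi$ is the inversion character, $\chi^i=1$ means $i$ even. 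Here I would use the regular-pair machinery of \cref{sec:reg} once I know $P$ is regular, or argue directly: by \cref{lem:reg-basic}, for $p\geq 5$ the group $P$ (of class presumably not too large) is regular, and then \cref{cor:reg-i} gives $\mho_1(\gamma_j(P))=\gamma_{j+2}(P)$, which tightly controls the $\gamma$-quotients; combined with \cref{lem:odd-stop}, if some even-index quotient $\gamma_i/\gamma_{i+1}$ ($i\geq 2$) had rank $2$ while the odd ones have rank $1$, counting orders via $\mho_1$ and $\Omega_1$ forces a contradiction with $\dG(P)=2$. The small primes $p\in\{3\}$ (necessarily $q=2$) and the exponent-$p$ versus non-exponent-$p$ dichotomy (via \cref{prop:no-exp-p}) may need to be treated by hand — this is where the announced exceptional group of order $81$ and class $3$ lives, and the proposition is still true there since that group does have maximal class.

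**Expected main obstacle.** The delicate point is case $(b)$ with $q=2$: ruling out a rank-$2$ jump at an even index $i\geq 4$ without yet knowing the class is bounded. The clean way is to first establish that $P$ is regular (automatic for $p\ge 5$ once the class is shown $\le 3$ via a preliminary argument, or via \cref{lem:reg} if one first bounds $d=2$-pair orders) and then invoke \cref{cor:reg-i} and \cref{lem:odd-stop}: these say $\mho_1(\gamma_i(P))=\gamma_{i+2}(P)$ and that a rank-$1$ jump at an odd index is terminal, so the lower central series of a class-$\geq 3$ regular rank-$2$ pair has all quotients of rank $\le 2$ with a rigid alternation controlled by $\chi$, and a rank-$2$ quotient at index $i\ge 2$ would, after applying $\mho_1$ enough times, produce a rank-$2$ quotient at an odd index, contradicting \cref{lem:corba}(2) (which forces $\chi^{\text{odd}}=1$, impossible for $q=2$). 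Assembling these pieces so that the exceptional $(3,2)$-case is correctly absorbed rather than excluded is the part that requires care.
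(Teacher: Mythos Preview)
Your route is far more elaborate than needed and contains a genuine gap in case~(b). The paper's proof is a short uniform argument that does not split on $q$ or on $p$, does not use regularity, and does not invoke \cref{cor:q=2}.

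Here is the idea you are missing. After noting that $|\gamma_i(P)/\gamma_{i+1}(P)|\in\{1,p,p^2\}$ for all $i\ge 2$, suppose some $i\ge 2$ gives $p^2$. The case $i=2$ is immediately excluded by the $\wedge^2$ observation: the commutator map factors through the one-dimensional space $\wedge^2(P/\gamma_2(P))$, so $|\gamma_2(P)/\gamma_3(P)|\le p$. For $i\ge 3$ the paper looks not at $\gamma_i/\gamma_{i+1}$ but at the \emph{two-step} quotient $\gamma_{i-1}(P)/\gamma_{i+1}(P)$. This is abelian (since $[\gamma_{i-1},\gamma_{i-1}]\subseteq\gamma_{2i-2}\subseteq\gamma_{i+1}$), has order at least $p^{3}$, and has exponent $p$ by \cref{cor:i+q} (because $\mho_1(\gamma_{i-1})\subseteq\gamma_{i-1+q}\,\gamma_{2i-2}\subseteq\gamma_{i+1}$, using only $q\ge 2$ and $i\ge 3$). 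Hence $\gamma_{i-1}(P)/\gamma_{i+1}(P)$ is elementary abelian of rank at least $3$, forcing $\dG(\gamma_{i-1}(P))\ge 3$ and contradicting property~\ref{it:a}. That is the entire proof.

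Your case~(b) sketch fails at a concrete point: the identity $\mho_1(\gamma_j(P))=\gamma_{j+2}(P)$ from \cref{cor:reg-i} shifts indices by $2$, so repeated application preserves parity and will never ``produce a rank-$2$ quotient at an odd index'' from one at an even index; nothing in your outline closes this. You also need $p\ge 5$ to invoke \cref{lem:reg} when $d=2$, so $p=3$ remains an open subcase. Finally, your case-(a) discussion of why $\dG(\gamma_2)=2$ fails is off the mark: in a $2$-generator group of class~$2$, the subgroup $\gamma_2(P)$ is generated by the single commutator $[x,y]$, so there is no ``extra central generator'' of $\gamma_2(P)$ --- this is precisely the $\wedge^2$ point again, and it already handles $i=2$ without any appeal to \cref{cor:q=2}.
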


\begin{proof}
 We fix $i\geq 2$ and show that $|\gamma_i(P):\gamma_{i+1}(P)|\leq p$. Since $d=2$, we know that $|\gamma_i(P):\gamma_{i+1}(P)|\leq p^2$. Assume for a contradiction that $|\gamma_i(P):\gamma_{i+1}(P)|= p^2$.  
Note that $\gamma_i(P)/\gamma_{i+2}(P)$ is abelian and, thanks to \cref{cor:i+q}, its exponent divides $p$. 
 Then, since $P/\gamma_2(P)$ is a $2$-dimensional vector space over $\F_p$ and the commutator map induces a surjective homomorphism $\wedge^2(P/\gamma_2(P))\rightarrow \gamma_2(P)/\gamma_3(P)$, we have that $i>2$. In particular, $\gamma_{i-1}(P)/\gamma_{i+1}(P)$ is abelian, of order at least $p^3$, and, by \cref{cor:i+q}, of exponent $p$. This gives a contradiction to property \ref{it:a} of maximal pairs.
\end{proof}

\begin{lemma}\label{prop:n5p+1}
  If $p>3$, then $P$ has order dividing $p^4$.
\end{lemma}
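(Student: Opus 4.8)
The plan is to bound the nilpotency class of $P$ and then read off $|P|$ from the maximal-class structure of $P$ given by \cref{prop:d2max}, using the regular-pair results of \cref{sec:reg} for the class bound. The starting point is that $p>3$ forces $p\geq 5$, so in particular $p\geq 2d$ and, by \cref{lem:reg}, the pair $(P,\alpha)$ is regular; hence everything proven in \cref{sec:reg} is available for it. By \cref{prop:d2max} the group $P$ has maximal class, so $|P|=p^{c+1}$ where $c$ denotes the nilpotency class of $P$ (here one uses that $\dG(P)=2$, so $|P:\gamma_2(P)|=p^2$, together with $|\gamma_i(P):\gamma_{i+1}(P)|\leq p$ for $i\geq 2$). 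It therefore suffices to prove that $c\leq 3$.

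If $c\leq 2$ there is nothing to do, so assume $c\geq 3$. Then $\gamma_3(P)\neq\gamma_4(P)$ (otherwise $\gamma_3(P)=\gamma_4(P)=\cdots=1$, contradicting $c\geq 3$), and, since $P$ has maximal class, this forces $|\gamma_3(P):\gamma_4(P)|=p$. Now \cref{lem:odd-stop}, applied with $a=1$ to the regular maximal pair $(P,\alpha)$ of class at least $3$, yields $c=1+2\cdot 1=3$. In either case $|P|=p^{c+1}$ with $c\leq 3$, and therefore $|P|$ divides $p^4$.

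I do not expect a genuine obstacle here: the only point requiring attention is that the hypotheses behind \cref{lem:odd-stop} are satisfied, namely that $P$ is regular — which is what makes \cref{cor:reg-i} and the identity $\mho_1(\gamma_i(P))=\gamma_{i+2}(P)$ available — and that $P$ has class at least $3$; both are immediate from $p\geq 5$ and the case assumption. For the subcase $q>2$ one could instead invoke \cref{cor:q=2} directly to obtain $c\leq 2$, but the route through \cref{lem:odd-stop} covers every prime $q$ uniformly, so I would present only that.
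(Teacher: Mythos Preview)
Your proof is correct and follows essentially the same route as the paper: both use \cref{prop:d2max} for maximal class and then \cref{lem:odd-stop} with $a=1$ to pin the class at $3$. The only cosmetic difference is that the paper argues by contradiction, passing to a quotient $\overline{P}$ of order $p^5$ and class $4\leq p-1$ to obtain regularity via \cref{lem:reg-basic}(1), whereas you invoke \cref{lem:reg} directly on $P$; either justification works and the remainder is identical.
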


\begin{proof}
Write $|P|=p^n$ and assume, for a contradiction, that $n\geq 5$. Thanks to \cref{prop:d2max} the group $P$ has maximal class and thus a unique quotient $\overline{P}$ of order $p^5$ and class $4\leq p-1$. 
Thanks to \cref{lem:reg-basic}\ref{it:reg1}, the group $\overline{P}$ is regular and so \cref{lem:odd-stop} yields that $\overline{P}$ has class $3$. Contradiction. 
\end{proof}

\begin{proposition} \label{prop:d=2p>3}
   Assume that $p>3$.  Then $P$ is is isomorphic to one of the following:
\begin{enumerate}[label=$(\arabic*)$]
    \item an elementary abelian group of order $p^2$;
    \item an extraspecial group of order $p^3$ and exponent $p$.
\end{enumerate}
\end{proposition}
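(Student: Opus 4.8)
The plan is to bootstrap from the order bound already established. By \cref{prop:n5p+1}, we know $|P| = p^n$ with $n \leq 4$, and by \cref{prop:d2max} the group $P$ has maximal class; moreover property \ref{it:a} of the maximal pair, together with $d = 2$, forces $\dG(P) = 2$. So the only candidates are $n \in \{2, 3, 4\}$, and I would dispose of each case in turn, using the structural constraints on $\alpha$ and the fact that (by \cref{lem:reg-basic}, since the class is at most $3 \leq p-1$ for $p > 3$) the group $P$ is regular.

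First I would handle $n = 2$: then $P$ is abelian of rank $2$, hence elementary abelian of order $p^2$ by \cref{lem:basic1}\ref{it:easy1} (which gives $\Phi(P) = \gamma_2(P) = 1$), giving case (1). Next, for $n = 4$, I would derive a contradiction. Since $P$ has maximal class and order $p^4$, the lower central factors have orders $p^2, p, p$, so $|\gamma_3(P):\gamma_4(P)| = p$. As $P$ is regular, \cref{lem:odd-stop} applied with $a = 1$ forces the class of $P$ to be $1 + 2 = 3$, contradicting maximal class (which would require class $3$ for order $p^4$ — wait, order $p^4$ of maximal class \emph{does} have class $3$). Let me reconsider: for $|P| = p^4$ of maximal class the class is exactly $3$, so $\gamma_4(P) = 1$ and $|\gamma_3(P)| = p$, consistent with \cref{lem:odd-stop}. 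The contradiction must instead come from \cref{prop:no-exp-p} combined with \cref{cor:class3P^p}: since $P$ has class $3$ and is regular, \cref{cor:class3P^p} gives $\mho_1(P) = \gamma_3(P)$, so $|P:\mho_1(P)| = p^3$; but then $P/\mho_1(P)$ has order $p^3$, exponent $p$, and class $2$ (being a maximal-class group of order $p^3$), and it still belongs to a maximal pair of rank $2$ by \cref{remQuot} — and one checks directly that an extraspecial group of order $p^3$ and exponent $p$ admits no automorphism of prime order $q \mid p-1$ acting as a nontrivial power automorphism modulo Frattini while satisfying \ref{it:c}, \emph{unless} it is the whole of $P$, which it is not here since $P$ is strictly larger. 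The cleaner route: \cref{cor:q=2} forces $q = 2$ when the class is $3$, and then $\mho_1(P) = \gamma_3(P)$ has order $p$ with $\alpha$ acting through $\chi^3 = \chi$ on it (by \cref{lem:basic1}\ref{it:easy3}, since $q = 2$ gives $\chi^3 = \chi$), so $\mho_1(P)$ is a proper $\alpha$-invariant subgroup — but I should really check whether $\dG(\mho_1(P)) = d$, which it is not ($\mho_1(P)$ is cyclic), so \ref{it:c} does not bite directly. The genuine obstruction for $n=4$: any maximal-class group of order $p^4$ has an abelian maximal subgroup $M$ of order $p^3$ with $\dG(M) = 2$; since $M$ is abelian of rank $2$ and exponent dividing $p^2$, and $\alpha$ is a power automorphism modulo $\Phi(M)$, \cref{lem:surjective} applied to $p$-th powering shows $\alpha$ acts on $\mho_1(M)$ through $\chi$, and one argues $M$ must then be $\alpha$-invariant (it is characteristic, being the unique abelian maximal subgroup, or using that $P$ has a unique subgroup of its index with $\dG = 2$) contradicting \ref{it:c}. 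This is the step I expect to be the main obstacle: pinning down \emph{which} proper subgroup $H$ with $\dG(H) = d = 2$ is $\alpha$-invariant and has $\alpha|_{H/\Phi(H)}$ a nontrivial power automorphism, in violation of \ref{it:c}.

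For $n = 3$, I would show $P$ must have exponent $p$, giving case (2). By \cref{prop:d2max}, $P$ has maximal class $2$, so $P$ is extraspecial; by \cref{lem:basic1}\ref{it:easy1}, $\Phi(P) = \gamma_2(P) = \ZG(P)$ has order $p$. If $P$ had exponent $p^2$, then $\mho_1(P) \neq 1$, hence $\mho_1(P) = \gamma_2(P)$ (both have order $p$, and $\mho_1(P) \subseteq \Phi(P) = \gamma_2(P)$); since $p$-th powering $P \to \mho_1(P)$ factors through $P/\Phi(P)$ and is a nonzero homomorphism on which $\alpha$ acts, \cref{lem:surjective} gives that $\alpha$ acts on $\mho_1(P) = \gamma_2(P)$ through $\chi$; but by \cref{lem:basic1}\ref{it:easy3} with $i = 2$, $\alpha$ acts on $\gamma_2(P)$ through $\chi^2$, forcing $\chi^2 = \chi$, i.e. $\chi = 1$, contradicting \ref{it:b}. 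Hence $P$ has exponent $p$, completing case (2). Throughout, $p > 3$ guarantees regularity (class $\leq 3 \leq p - 1$ and exponent considerations), so all applications of \cref{lem:reg-basic} and the $\mho$-calculus are legitimate; I would state explicitly at the start of the proof that $P$ is regular and that $\dG(P) = 2$ with $P$ of maximal class and $|P| \leq p^4$, then run the three-case analysis $n = 2, 3, 4$ as above.
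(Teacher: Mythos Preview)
Your treatment of $n\in\{2,3\}$ is fine; in particular, the $n=3$ argument (forcing $\chi=\chi^2$ via the $p$-th power map factored through $P/\Phi(P)$) is a correct alternative to the paper's one-line appeal to \cref{lem:reg-i+q}.

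The genuine gap is at $n=4$. After several false starts you settle on the unique abelian maximal subgroup $C=\Cyc_P(\gamma_2(P))$ as the witness for a violation of \ref{it:c}, but this does not work. One has $\Phi(C)=\mho_1(C)=\gamma_3(P)$, so $C/\Phi(C)=C/\gamma_3(P)$ has the filtration with factors $C/\gamma_2(P)$ and $\gamma_2(P)/\gamma_3(P)$, on which $\alpha$ acts through $\chi$ and $\chi^2$ respectively. Since $q=2$ (by \cref{cor:q=2}) these eigenvalues are $-1$ and $1$, which are distinct; hence $\alpha$ is \emph{not} a power automorphism of $C/\Phi(C)$ and \ref{it:c} is not violated. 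The same obstruction hits $M=\Omega_1(P)$: here $\Phi(M)=\gamma_2(M)=\gamma_3(P)$, and $M/\Phi(M)$ again carries both eigenvalues $\pm 1$. So no maximal subgroup of $P$ can serve as your witness, and merely exhibiting an $\alpha$-invariant maximal subgroup with $\dG=2$ is not a contradiction.

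The paper resolves this by descending one level further. With $M=\Omega_1(P)$ (characteristic, order $p^3$, exponent $p$), the quotient $\overline{M}=M/\gamma_3(P)$ is abelian and, by \cref{lem:abelian+-}, splits as $\overline{M}^+\oplus\overline{M}^-$ with each summand of order $p$. Take $N\leq M$ to be the preimage of $\overline{M}^-$. Then $|N|=p^2$, $\alpha$ inverts $N/\gamma_3(P)$, and $\alpha$ inverts $\gamma_3(P)$ because $\chi^3=\chi$. Now \cref{-on quotients} forces $\alpha$ to invert all of $N$ and $N$ to be abelian; since $N\subseteq\Omega_1(P)$, it is elementary abelian of rank $2$, and $A$ acts on $N=N/\Phi(N)$ through $\chi$. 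This $N$ is the $\alpha$-invariant subgroup violating \ref{it:c}. The idea missing from your sketch is precisely that the witness is not a maximal subgroup of $P$ but a carefully chosen order-$p^2$ subgroup inside $\Omega_1(P)$, built from the minus eigenspace.
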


\begin{proof}
We first prove that $|P| \leq p^3$.
For a contradiction, suppose that $|P| \geq p^4$. Then \cref{prop:n5p+1} yields that $|P|=p^4$ and, by \cref{prop:d2max}, the class of $P$ is $3$. 
It follows from \cref{lem:reg-basic}\ref{it:reg1} that $P$ is regular and from \cref{prop:reg-p} that $\mho_1(P)=\gamma_3(P)$. 
Moreover, \cref{th:q=2} ensures that $q=2$. It is easily seen that $C=\Cyc_P(\gamma_2(P))$ is abelian of order $p^3$. The rank of $P$ being $2$, this implies that $\mho_1(C)=\gamma_3(P)$ and so $C$ is different from $M=\Omega_1(P)$, which is also a maximal subgroup of $P$ (see \cref{lem:reg-basic}\ref{it:reg6}). Since both $C$ and $M$ contain $\Phi(P)$, both subgroups are $A$-invariant. 
Write now $\overline{M}=M/\gamma_3(P)$ and note that $\overline{M}$ is abelian and $A$-invariant. Then \cref{lem:abelian+-} implies that $\overline{M}=\overline{M}^+\oplus\overline{M}^-$ where both summands have order $p$. Let $N$ be the unique subgroup of $M$ mapping to $\overline{M}^-$ in $\overline{M}$.  Since $A$ acts on $\gamma_3(P)$ through $\chi^3=\chi$, we derive from \cref{-on quotients} that $N$ is an elementary abelian subgroup of order $p^2$ on which $A$ acts through $\chi$. This gives a contradiction to property \ref{it:c} of maximal pairs and $d=2$.

We have proved that $|P|\leq p^3$ and so $|P|$ is $p^2$ or $p^3$.
If $|P|=p^2$, then clearly $P$ is elementary abelian; assume therefore that $|P|=p^3$. Thanks to \cref{lem:reg-i+q} the exponent of $P$ is equal to $p$ and, the rank of $P$ being $2$, the group $P$ is non-abelian.
\end{proof}

\begin{proposition}\label{prop:d=2p=3}
Assume that $p=3$.  Then $P$ is is isomorphic to one of the following:
\begin{enumerate}[label=$(\arabic*)$]
    \item an elementary abelian group of order $9$;
    \item an extraspecial group of order $27$ and exponent $3$;
    \item the group \emph{\texttt{SmallGroup(81,10)}}.
\end{enumerate}
\end{proposition}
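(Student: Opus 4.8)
The plan is to mirror the structure of the proof of \cref{prop:d=2p>3}, handling the prime $p=3$ separately and carefully tracking where the classification of rank-$2$ regular pairs breaks down, since $p=3$ groups of class $\geq 3$ and order $\geq 3^3$ are not automatically regular. First I would use \cref{prop:d2max} to note that $P$ has maximal class, and \cref{prop:prop:no-exp-p} together with \cref{lem:basic1} to bound the order: the main work is to show $|P|\leq 3^4$. For this I would argue as in \cref{prop:n5p+1} but more carefully: if $|P|\geq 3^5$, then $P$ has a quotient $\overline{P}$ of order $3^5$ and maximal class $4$; since $4 > p-1 = 2$, regularity is not immediate, so I would instead argue directly on the associated graded Lie ring, using \cref{cor:i+q} (with $q=2$ since \cref{cor:q=2} forces $q=2$ once the class is $\geq 3$ and $p=3$) to see that $\mho_1(\gamma_i(P))\subseteq \gamma_{i+2}(P)\gamma_{2i}(P)$, and derive a contradiction with property \ref{it:a} by exhibiting a subgroup needing three generators, exactly as in the $p>3$ argument but replacing appeals to \cref{lem:reg-i+q}, \cref{lem:reg-basic}, \cref{lem:odd-stop} with their non-regular counterparts \cref{cor:i+q} and \cref{prop:prop:no-exp-p}.

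Next, with $|P|\in\{3^2,3^3,3^4\}$ established, I would dispatch the small cases: if $|P|=3^2$ then $P$ is elementary abelian (it has rank $2$ and, by \cref{lem:basic1}\ref{it:easy1}--\ref{it:easy2}, exponent dividing $3$ with $\gamma_2(P)$ trivial since a non-abelian group of order $3^2$ does not exist). If $|P|=3^3$, then by \cref{prop:d2max} the class is $2$; I would argue that $P$ has exponent $3$ — here \cref{prop:prop:no-exp-p} does not directly apply (it is about class $3$), so instead I would use that a non-abelian group of order $3^3$ and exponent $9$ admits no automorphism of order $2$ acting as a non-trivial power automorphism on $P/\Phi(P)$ compatible with property \ref{it:c}: concretely, its Frattini-quotient action and the structure of $\mho_1(P)\cong C_3$ together with \cref{lem:surjective} would yield an $\alpha$-invariant subgroup contradicting property \ref{it:c}. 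The surviving case is the extraspecial group of order $27$ and exponent $3$, which does support a suitable $\alpha$ (scalar action on $P/\Phi(P)$ by $-1$).

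The heart of the proof, and the main obstacle, is the case $|P|=3^4$: I would show $P\cong\texttt{SmallGroup(81,10)}$. By \cref{prop:d2max} the class is $3$; by \cref{prop:prop:no-exp-p} the exponent is not $3$, so $P$ has exponent $9$, and $\mho_1(P)$ is a non-trivial subgroup contained in $\gamma_2(P)$. Using $q=2$ (from \cref{cor:q=2}) and \cref{cor:i+q} with $i=1$, I would get $\mho_1(P)\subseteq \gamma_3(P)$, and since $\gamma_3(P)$ has order $3$ and $\mho_1(P)\neq 1$, in fact $\mho_1(P)=\gamma_3(P)$. At this point the isomorphism type of $P$ is pinned down among maximal-class groups of order $3^4$: there are exactly three such groups up to isomorphism (the two of exponent $3$ and \texttt{SmallGroup(81,10)} of exponent $9$), and only \texttt{SmallGroup(81,10)} has $\mho_1(P)=\gamma_3(P)$ of order $3$ while the others of exponent $3$ are excluded by \cref{prop:prop:no-exp-p}. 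I would close by verifying — as in \cref{prop:d=2p>3}, via the argument with $C=\Cyc_P(\gamma_2(P))$, $M=\Omega_1(P)$, \cref{lem:abelian+-}, and \cref{-on quotients} — that \texttt{SmallGroup(81,10)} indeed carries an automorphism $\alpha$ of order $2$ making $(P,\alpha)$ a maximal pair, so this case genuinely occurs rather than being vacuous. The delicate point will be ruling out the exponent-$3$ maximal-class groups of order $3^4$ cleanly and confirming the exceptional example satisfies properties \ref{it:a}--\ref{it:c}, which I expect to require a short explicit computation with the commutator and power structure of \texttt{SmallGroup(81,10)}.
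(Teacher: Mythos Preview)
Your approach has two genuine gaps that the paper sidesteps by resorting to explicit computation.

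First, the bound $|P|\leq 3^4$: you propose to mimic \cref{prop:n5p+1}, replacing the regularity-based lemmas by \cref{cor:i+q} and \cref{prop:no-exp-p}. But \cref{prop:n5p+1} hinges on \cref{lem:odd-stop}, whose proof uses regularity essentially (through the identity $|\Omega_a(\gamma_i(P))|=|\gamma_i(P):\mho_a(\gamma_i(P))|$ from \cref{lem:reg-basic}). You offer no non-regular substitute, and I do not see one that works for a class-$4$ quotient at $p=3$. The paper bypasses this entirely: it checks by computer that among maximal-class groups of order $3^5$ only \texttt{SmallGroup(243,26)} carries an order-$2$ automorphism acting as $-1$ on the Frattini quotient, and that its central quotient is \texttt{SmallGroup(81,9)} rather than \texttt{SmallGroup(81,10)}, contradicting \cref{remQuot}.

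Second, your identification at order $3^4$ rests on two errors. Applying \cref{cor:i+q} with $i=1$ gives only $\mho_1(P)\subseteq\gamma_{3}(P)\gamma_{2}(P)=\gamma_2(P)$, not $\mho_1(P)\subseteq\gamma_3(P)$; the sharper inclusion is \cref{lem:reg-i+q}, which again needs regularity. (The conclusion $\mho_1(P)\subseteq\gamma_3(P)$ can be salvaged by a Hall--Petrescu argument showing that cubing is a homomorphism on $P/\gamma_3(P)$ and then invoking \cref{lem:q-mod}, but you have not given this.) More seriously, your enumeration of maximal-class groups of order $81$ is wrong: there are four, namely \texttt{SmallGroup(81,7)}--\texttt{(81,10)}, and \emph{none} of them has exponent $3$, since a $2$-generated $3$-group of exponent $3$ has order at most $27$. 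Thus \cref{prop:no-exp-p} eliminates nothing here, and the condition $\mho_1(P)=\gamma_3(P)$ does not by itself single out \texttt{SmallGroup(81,10)}. The paper distinguishes the four groups via $\Omega_1$ instead: for \texttt{(81,7)}--\texttt{(81,9)} one has $|\Omega_1(P)|\geq 27$, producing an elementary abelian rank-$2$ subgroup on which $\alpha$ acts by $-1$ and violating property~\ref{it:c}, whereas for \texttt{(81,10)} one has $\Omega_1(P)=\gamma_2(P)$ and the pair is verified to be maximal.
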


\begin{proof}
The claim is easily verified when $|P| \leq 27$, we assume therefore that $|P|\geq 81$. 
 The remaining part of the proof is computational and has been checked by all three authors in the computer algebra systems \texttt{GAP} \cite{GAP4} and \texttt{SageMath} \cite{SageMath}.

 Thanks to \cref{prop:d2max}, we know that $P$ has maximal class.  There exist precisely $4$ groups of order $3^4=81$ and maximal class up to isomorphism: these are the groups 
 \texttt{SmallGroup(81,7)}, \texttt{SmallGroup(81,8)}, \texttt{SmallGroup(81,9)}, \texttt{SmallGroup(81,10)} in the  \texttt{SmallGroup} library of \texttt{GAP} \cite{BEO02}. Each of these groups has an automorphism $\alpha$ of order $2$ that induces scalar multiplication by $-1$ on the Frattini quotient. For each of these groups other than \texttt{SmallGroup(81,10)}, the subgroup generated by the elements of order $3$ has order at least $27$: this ensures that the group has a subgroup of order $p^2$ on which $\alpha$ acts as scalar multiplication by $-1$, contradicting property \ref{it:c}. On the contrary, the subgroup of \texttt{SmallGroup(81,10)} that is generated by the elements of order $3$ is equal to the derived subgroup of \texttt{SmallGroup(81,10)}, from which it is not difficult to deduce that $(\texttt{SmallGroup(81,10)},\alpha)$ is a maximal pair of rank $2$ yielding the $3$-maximal group \texttt{SmallGroup(162,22)}.

If we now move to the groups of order $3^5=243$, we find that \texttt{SmallGroup(243,26)} is the unique $3$-group, up to isomorphism, of maximal class and order $243$ that possesses an automorphism $\beta$ of order $2$ that induces scalar multiplication by $-1$ on the Frattini quotient. However, the quotient of \texttt{SmallGroup(243,26)} by its center is isomorphic to \texttt{SmallGroup(81,9)} and thus not isomorphic to \texttt{SmallGroup(81,10)}. As a consequence of \cref{remQuot} we derive that $\texttt{SmallGroup(243,26)}$ is not part of any maximal pair and our classification is therefore complete.
\end{proof}

\subsection{The classification of $3$-maximal groups}\label{full3max}

Combining \cite[Thm.~1.11, Prop.~4.3]{AZG22} (used for (1) and (2)) with \cref{thChar},
and Propositions \ref{prop:d=2p>3} and \ref{prop:d=2p=3} (used for (3)), we obtain the list of $3$-maximal finite groups.
Specifically, a finite group $G$ is $3$-maximal if and only if one of the following occurs:
\begin{enumerate}
	\item There exists an odd prime $p$ such that $G$ is a $p$-group.
 Moreover $G$ is isomorphic to one of the following groups:
	\begin{enumerate}[label=(\roman*)]
		\item  an elementary abelian group of order $p^3$;
		\item the group of order $p^4$ defined by
		$$\langle
		a, b, c \mid a^{p^2}=b^p=c^p =[a,b]=[a, c]=1, [c, b] = a^p\rangle. $$
	\end{enumerate} 
 
	\item The group $G$ is a 2-group. More precisely $G$ is isomorphic to one of the following groups:
	\begin{enumerate}[label=(\roman*)]
		\item an elementary abelian group of order $8$;
		\item the direct product $C_2\times Q_8$;
		\item the central product $C_4*Q_8=C_4*D_8$;
		\item \texttt{SmallGroup(32,32)}. 
	\end{enumerate}
 
\item There exist an odd prime $p$ and a positive integer $t$ such that 
$G$ is a semidirect product $P\rtimes \langle \alpha\rangle$ where $P$ is a $p$-group, 
$\alpha$ has order $q^t$ for some prime $q$ that divides $p-1$, $\alpha^q\in\ZG(G)$, and $G/\langle \alpha^q\rangle$ is isomorphic to one of the following:
\begin{enumerate}[label=(\roman*)]
    \item a semidirect product $P\rtimes C_q$, where $P$ is elementary abelian of order $p^2$;
    \item a semidirect product $P\rtimes C_q$ with $P$ extraspecial of exponent $p$ and order $p^3$;
	\item \texttt{SmallGroup(162,22)}.

\end{enumerate}

\end{enumerate}

\section{Maximal pairs of rank $3$}\label{sec6}

In order to gather more evidence in the direction of answering the questions from the Introduction, in this section we completely classify the maximal $(p,q)$-pairs of rank $3$.

\begin{lemma}\label{lem:rk3cl2}\label{lem:rk3cltwo}
Let $(P,\alpha)$ be a maximal $(p,q)$-pair of rank $3$. 
If $P$ has nilpotency class $2$, then $P$ has order $p^4$, exponent $p$, and $\gamma_2(P)$ of order $p$.
\end{lemma}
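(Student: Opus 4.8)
The plan is first to show that $P$ has exponent $p$, and then to pin down $\dim_{\F_p}\gamma_2(P)$ by exhibiting, in each unwanted case, an elementary abelian subgroup of rank $4$, which is forbidden by property~\ref{it:a}.

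I would begin with the structural preliminaries. Since $q\mid p-1$, the prime $p$ is odd; since $P$ has class $2$, \cref{lem:basic1}\ref{it:easy1} gives $\Phi(P)=\gamma_2(P)$, so $\dG(P)=3$ yields $|P:\gamma_2(P)|=p^3$. Moreover $\gamma_2(P)$ is central, and $\mho_1(\gamma_2(P))\subseteq\gamma_4(P)=1$ by \cref{gamma4}, so $\gamma_2(P)$ has exponent $p$. For the exponent of $P$, I would use the collection formula $(xy)^p=x^py^p[y,x]^{\binom p2}$, valid in a class-$2$ group: as $p$ is odd we have $p\mid\binom p2$, and $[y,x]^p=1$, so $(xy)^p=x^py^p$ and the $p$-th power map is a homomorphism; it also kills $\gamma_2(P)$ (central of exponent $p$), hence it induces a surjective homomorphism $P/\gamma_2(P)\to\mho_1(P)$ that respects the action of $A=\gen\alpha$. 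Since $A$ acts on $P/\gamma_2(P)$ through $\chi$, \cref{lem:surjective} forces $A$ to act on $\mho_1(P)$ through $\chi$ as well; but $\mho_1(P)$ is an $A$-invariant subgroup of $\gamma_2(P)$, on which $A$ acts through $\chi^2$ by \cref{lem:basic1}\ref{it:easy3}. If $\mho_1(P)\neq 1$, any non-trivial $v\in\mho_1(P)$ would satisfy $v^{\chi(\alpha)}=v^{\chi(\alpha)^2}$, forcing $\chi(\alpha)\equiv 1\pmod p$ and contradicting non-triviality of $\chi$. Hence $\mho_1(P)=1$ and $P$ has exponent $p$.

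Next I would bound $\gamma_2(P)$ by a dimension count. The commutator map induces a surjection $\wedge^2(P/\gamma_2(P))\twoheadrightarrow\gamma_2(P)$ of $\F_p$-vector spaces, and $\dim\wedge^2(P/\gamma_2(P))=\binom 32=3$; as $P$ is non-abelian, $1\le\dim_{\F_p}\gamma_2(P)\le 3$. I claim $\dim\gamma_2(P)\le 1$. If $\dim\gamma_2(P)=3$, the commutator map is an isomorphism, so for a subgroup $M$ of index $p$ containing $\gamma_2(P)=\Phi(P)$ the quotient $M/\gamma_2(P)$ is a plane $V$ and $\gamma_2(M)$ is the image of $\wedge^2V$, of order $p$; then $|M|=p^5$ and $|M/\gamma_2(M)|=p^4$, and since $M$ has exponent $p$ this gives $\dG(M)=4>3$, contradicting property~\ref{it:a}. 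If $\dim\gamma_2(P)=2$, the kernel of the commutator map is a line in $\wedge^2(P/\gamma_2(P))$; since $\dim(P/\gamma_2(P))=3$, every non-zero element of $\wedge^2$ is decomposable, so that line is spanned by some $\bar x\wedge\bar y$ with $\bar x,\bar y$ independent. Lifting to $x,y\in P$ we get $[x,y]=1$, so $M=\gen{x,y}\gamma_2(P)$ is abelian (as $\gamma_2(P)$ is central), of order $p^4$ and exponent $p$, hence elementary abelian of rank $4$, again contradicting property~\ref{it:a}. Therefore $\dim\gamma_2(P)=1$, i.e.\ $|\gamma_2(P)|=p$, whence $|P|=p^3\cdot p=p^4$.

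The step I expect to be the real obstacle is the exponent-$p$ argument: one must verify carefully that the $p$-th power map is a genuine homomorphism and that it factors through $P/\gamma_2(P)$, which is exactly where the oddness of $p$ and the fact that $\gamma_2(P)$ has exponent $p$ enter; after that, the incompatibility between the characters $\chi$ and $\chi^2$ closes the argument, and the remaining part is just a short count on $\wedge^2(P/\gamma_2(P))$.
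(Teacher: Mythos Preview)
Your proof is correct and follows essentially the same route as the paper: establish exponent $p$, then use the alternating-square map $\wedge^2(P/\gamma_2(P))\to\gamma_2(P)$ to locate, in the unwanted cases, an abelian subgroup of exponent $p$ and order $p^4$, contradicting property~\ref{it:a}.

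There are two small differences worth recording. First, the paper obtains exponent $p$ by noting that $P$ is regular (class $2<p$) and invoking \cref{lem:reg-i+q}, whereas you argue directly via the collection formula and the incompatibility of the characters $\chi$ and $\chi^2$ on $\mho_1(P)$; your route is self-contained and avoids the regularity machinery. Second, you treat the case $|\gamma_2(P)|=p^3$ explicitly (via a maximal subgroup $M$ with $\dG(M)=4$), while the paper's written proof only spells out the case $|\gamma_2(P)|=p^2$; your extra paragraph makes the argument complete without appeal to anything implicit. Otherwise the decisive step---decomposability of a non-zero vector in $\wedge^2$ of a $3$-dimensional space, producing commuting lifts $x,y$---is identical in both proofs.
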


\begin{proof}
For a contradiction, let $(P,\alpha)$ be a maximal $(p,q)$-pair with $\dG(P)=3$ and $\gamma_2(G)$ central of order $p^2$. The group $P$ is regular by \cref{lem:reg-basic}\ref{it:reg1}, and it has exponent $p$ thanks to \cref{lem:reg-i+q}. 
Define $V=P/\ZG(P)$ and $W=\gamma_2(P)$, with $\dim W=2$. Then the commutator map induces a surjective homomorphism 
$\phi:\wedge^2V\rightarrow W$, showing in particular that $V$ has dimension $3$ (otherwise $\dim\wedge^2V=1$). It follows that $\phi$ has a $1$-dimensional kernel, spanned by $g\ZG(P)\wedge h\ZG(P)$, say. Then the subgroup generated by $g$, $h$, and $\gamma_2(G)$ is an abelian group of order $p^4$ and exponent $p$. This gives a contradiction to property \ref{it:a}. We have proved that $|\gamma_2(p)|=p$ and thus $P$ has order $p^4$.
\end{proof}

\begin{lemma}\label{lemmap5}
Let $(P,\alpha)$ be a maximal $(p,q)$-pair of rank $3$. 
If $p>3$ and $|P|=p^5$, then the following hold: 
\begin{enumerate}[label=$(\arabic*)$]
	\item The group $\gamma_2(P)$ is isomorphic to $C_p\times C_p.$ 
	\item The group $\gamma_3(P)$ is isomorphic to $C_p.$ 
	\item The group $\Cyc_P(\gamma_2(P))$ is isomorphic to $ C_{p^2}\times C_p\times C_p$. 
	\item The order of $\Omega_1(P)$ is equal to $p^4.$
	\item One has $\gamma_2(\Omega_1(P))=\gamma_2(P).$
	\item One has  $q=2.$
\end{enumerate}
\end{lemma}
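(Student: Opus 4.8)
The plan is to exploit the structural results on regular maximal pairs from Section~\ref{sec:reg}, since $|P|=p^5$ and $p>3$ forces $P$ to be regular by \cref{lem:reg-basic}. First I would determine the nilpotency class of $P$. By \cref{lem:rk3cltwo}, a class-$2$ maximal pair of rank $3$ has order $p^4$, so class $c\geq 3$; on the other hand $|P|=p^5$ rules out class $\geq 4$ once we know the factors $|\gamma_i(P):\gamma_{i+1}(P)|$ are each at most $p^{d}=p^3$ but must multiply (after the $p^3$ from $P/\gamma_2(P)$) to $p^2$, so in fact $|\gamma_2(P):\gamma_3(P)|$ and $|\gamma_3(P):\gamma_4(P)|$ are both $p$ and $\gamma_4(P)=1$, i.e.\ $P$ has class exactly $3$. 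This immediately gives (1) $\gamma_2(P)\cong C_p\times C_p$ (elementary abelian by \cref{lem:basic1}\ref{it:easy2}, order $p^2$) and (2) $\gamma_3(P)\cong C_p$. For (5) I would argue that $\gamma_2(P)\subseteq\gamma_2(\Omega_1(P))$ using \cref{cor:reg-i} or \cref{prop:reg-p}: since $\mho_1(P)=\gamma_3(P)$, the subgroup $\Omega_1(P)$ has index $|\mho_1(P)|=p$ in $P$ (by \cref{lem:reg-basic}(6)), so $\Omega_1(P)$ is a maximal, hence normal and $\alpha$-invariant, subgroup; since $P/\gamma_2(P)$ is elementary abelian of rank $3$, $\Omega_1(P)$ cannot be abelian without contradicting property~\ref{it:a} (an abelian subgroup of order $p^4$ and exponent $p$), so $\gamma_2(\Omega_1(P))\neq 1$, and being a nontrivial $P$-normal subgroup of $\gamma_2(P)\cong C_p\times C_p$ with the right $\alpha$-action... here one needs $\chi^2\neq\chi$, equivalently $q\neq 2$ is not yet known — so I'd instead show directly $\gamma_2(\Omega_1(P))=\gamma_2(P)$ by a generator count, giving (4) $|\Omega_1(P)|=p^4$ and (5).

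For (3), I would set $C=\Cyc_P(\gamma_2(P))$ and use the pairing $P/C\times\gamma_2(P)/\gamma_3(P)\to\gamma_3(P)$ as in the proof of \cref{prop:no-exp-p}: non-degeneracy plus $|\gamma_2(P):\gamma_3(P)|=p$ forces $|P:C|=p$, hence $|C|=p^4$. Then $\gamma_2(C)\subseteq[C,\gamma_2(P)]=1$ since $C$ centralizes $\gamma_2(P)$, so $C$ is abelian of order $p^4$; by property~\ref{it:a} it cannot have exponent $p$, so $\mho_1(C)\neq 1$, and since $\mho_1(C)\subseteq\mho_1(P)=\gamma_3(P)\cong C_p$ we get $\mho_1(C)=\gamma_3(P)$, whence $C$ has a $C_{p^2}$ direct factor and the complementary elementary abelian part has order $p^2$; thus $C\cong C_{p^2}\times C_p\times C_p$, which is (3). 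Finally, for (6) I would run the argument from the last paragraph of the proof of \cref{prop:d=2p>3}, adapted to rank $3$: since $C\neq\Omega_1(P)$ (as $\mho_1(C)\neq 1$) and both are $\alpha$-invariant maximal subgroups, consider $\overline{M}=\Omega_1(P)/\gamma_3(P)$, which is abelian, $A$-invariant, of rank $3$; if $q>2$ then $\alpha$ acts on $\overline M$ through the single character $\chi$, and lifting through \cref{-on quotients} (using that $A$ acts on $\gamma_3(P)$ through $\chi^3=\chi$ — this is exactly where $c=3$ and the lower-central-series character computation \cref{lem:basic1}\ref{it:easy3} enter) would produce an elementary abelian subgroup of rank $3$ on which $A$ acts through $\chi$, contradicting property~\ref{it:c} together with $d=3$; hence $q=2$. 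Alternatively, (6) is immediate from \cref{cor:q=2} once we know $P$ has class $3$.

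The main obstacle I anticipate is item (6) — or rather, making the reduction in (4)/(5) clean without circular dependence on the value of $q$. If I invoke \cref{cor:q=2} first to get $q=2$, then \cref{lem:abelian+-} is available throughout and the $+/-$ decomposition of the various elementary abelian $\alpha$-sections becomes the workhorse for (4) and (5), which is the cleanest route; so I would prove (6) early (right after establishing class $3$) and use it freely. The remaining genuinely computational point is pinning down the isomorphism type in (3): one must check that the abelian group $C$ of order $p^4$ with $\mho_1(C)=\Omega_{c-1}(C)$ of order exactly $p$ and $\mho_1(C)$ of order $p$ is forced to be $C_{p^2}\times C_p\times C_p$ rather than $C_{p^2}\times C_{p^2}$ — this follows since $|\mho_1(C)|=|C:\Omega_1(C)|=p$ by regularity (\cref{lem:reg-basic}(6)) rules out the rank-$2$ possibility. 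I would present these as short verifications rather than elaborate them.
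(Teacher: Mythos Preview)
Your overall strategy matches the paper's: establish class $3$ first (which is immediate from $|P/\gamma_2(P)|=p^3$ and \cref{lem:rk3cltwo}), read off (1) and (2), get (6) at once from \cref{cor:q=2}, deduce (4) from $\mho_1(P)=\gamma_3(P)$ together with \cref{lem:reg-basic}(6), and prove (5) by the $+/-$ decomposition of $\Omega_1(P)/\gamma_3(P)$ combined with \cref{-on quotients}, exactly as you outline in your second paragraph. All of this is correct and essentially identical to what the paper does.

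The one genuine gap is in your argument for (3). You write ``$\gamma_2(C)\subseteq[C,\gamma_2(P)]=1$ since $C$ centralizes $\gamma_2(P)$, so $C$ is abelian''. The equality $[C,\gamma_2(P)]=1$ is of course the definition of $C$, but the inclusion $[C,C]\subseteq[C,\gamma_2(P)]$ is not: $C$ has index $p$ in $P$ and strictly contains $\gamma_2(P)$, so commutators of elements of $C$ have no reason to land in $[C,\gamma_2(P)]$. What you do get for free is $[C,C]\subseteq\gamma_2(P)\subseteq\ZG(C)$, hence $C$ has class at most $2$; but that falls short of abelian, and a class-$2$ group of order $p^4$ with $\gamma_2$ of order $p$ is not excluded by property~\ref{it:a}. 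The paper fills this in as follows. From $[P,C]=\gamma_2(P)$ one has $[C,[P,C]]=[C,\gamma_2(P)]=1$, so the Three Subgroups Lemma yields $[P,[C,C]]=1$, i.e.\ $[C,C]\subseteq\ZG(P)\cap\gamma_2(P)=\gamma_3(P)$. Now the commutator induces an $\alpha$-equivariant bilinear map $C/\gamma_2(P)\times C/\gamma_2(P)\to\gamma_3(P)$; since $\alpha$ acts on the source through $\chi\otimes\chi$ and on the target through $\chi^3$, a nonzero image would force $\chi=1$, contradiction. Hence $[C,C]=1$. Once $C$ is known to be abelian, your determination of its isomorphism type (via $\mho_1(C)\subseteq\mho_1(P)=\gamma_3(P)$, nontriviality from property~\ref{it:a}, and the regularity count $|C:\Omega_1(C)|=|\mho_1(C)|=p$ to exclude $C_{p^2}\times C_{p^2}$) is correct.
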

\begin{proof}
Assume that $|P|=p^5$ and that $p>3$. Then $P$ has class $3$ by \cref{lem:rk3cltwo} and it is regular by \cref{lem:reg-basic}\ref{it:reg1}. In particular, we have that $|\gamma_2(P)|=p^2$ and $|\gamma_3(P)|=p$ and, thanks to \cref{th:q=2}, that $q=2$. Moreover, $\gamma_2(P)$ has exponent $p$ by \cref{gamma4} and thus (1)-(2)-(6) are proved. 
Set now $C=\Cyc_P(\gamma_2(P))$ and note that $C$ is maximal in $P$. Then it holds that $[P,C]=\gamma_2(P)$ and also that $[C,[P,C]]=[C,\gamma_2(P)]=1$. By the Three Subgroups Lemma we have $[P,[C,C]]=1$ yielding that $[C,C]$ is contained in $\gamma_3(P)$. Then 
the commutator map induces a bilinear map 
$C/\gamma_2(G)\times C/\gamma_2(G)\rightarrow\gamma_3(G)$ 
yielding that either $\gen{\alpha}$ acts on $\gamma_3(P)$ through $\chi^2=1$ or $[C,C]=1$. Since $\chi\neq 1$, we derive  that $C$ is abelian. Not to contradict  property \ref{it:a} we have therefore that $\exp(C)\neq p$ and, as a consequence of \cref{prop:reg-p}, that $\mho_1(C)=\gamma_3(P)=\mho_1(P)$. This  and proves (3) and \cref{lem:reg-basic}\ref{it:reg6} takes care of (4). We conclude by proving (5). To this end, let $M=\Omega_1(P)$ and, for a contradiction, assume that $\gamma_2(M)\subsetneq\gamma_2(P)$. If $M$ is abelian, then we have a contradiction to property \ref{it:a}, so, since $\gamma_3(P)=\gamma_2(P)\cap\ZG(P)$ and $\gamma_2(M)$ is normal in $P$, it holds that $\gamma_2(M)=\gamma_3(P)$. Define now $\overline{P}=P/\gamma_3(P)$ and use the bar notation for the subgroups of $\overline{P}$. The automorphism $\alpha$ induces an automorphism $\overline \alpha$ of $\overline P$ and it follows from \cref{lem:abelian+-} that $\overline{M}=\overline{M}^+\oplus\overline{M}^-$. Let now $N$ be a subgroup of $P$ that contains $\gamma_3(P)$ and such that $\overline{N}=\overline{M}^-$. Since $\gamma_3(P)=\gamma_3(P)^-$, it follows from \cref{-on quotients} that $N=N^-$ and $N$ is abelian. Since $N$ is contained in $M$, this yields a contradiction to property \ref{it:c} of maximal pairs. \end{proof}

\begin{proposition}\label{prop:unique-p5}
Let $(P,\alpha)$ be a maximal $(p,q)$-pair of rank $3$. 
If $p>3$ and $|P|=p^5,$ then $q=2$ and $P$ is uniquely determined up to isomorphism. Indeed $P$ is isomorphic to
	$$\begin{aligned} X=\langle x_1,x_2,x_3,x_4,x_5\mid  x_1^p=x_5, x_2^p=x_3^p=x_4^p=x_5^p=1,[x_2,x_3]=x_4,[x_2,x_4]=x_5,\\ [x_1,x_2]=[x_1,x_3]=[x_1,x_4]=[x_1,x_5]=[x_2,x_5]=[x_3,x_4]=[x_3,x_5]=[x_4,x_5]=1\rangle\end{aligned}$$
 where the following hold:
 \begin{itemize}
     \item the group $Y=\gen{x_1, x_3, x_4}$ is a maximal abelian subgroup of $X$;
     \item $\gen{x_2}$ is a complement of $Y$ in $X$;
     \item one has $\gen{x_5}=\mho_1(X)=\ZG(X)$.
 \end{itemize}
\end{proposition}
\begin{proof}
The result could be deduced from the list of finite groups of order $p^5$, given by Bender in \cite{Ben27}. In that paper the groups of order $p^5$ are divided in 54 families, and the only one satisfying  the conditions obtained in the previous lemma is the unique group in family 23. We prefer to give a direct proof. Let $C=C_P(\gamma_2(P))$ and $M=\Omega_1(P).$ It follows from \cref{lemmap5}, that $$\gamma_2(P)\subseteq M\cap C\cong C_p\times C_p\times C_p,$$  so we may choose $x_2,x_3,x_4,x_5$ so that 
$$M=\langle x_2,x_3,x_4,x_5\rangle, \quad
M\cap C=\langle x_3,x_4,x_5\rangle,\quad \gamma_2(P)=\langle x_4,x_5\rangle, \quad 
\gamma_3(P)=\langle x_5\rangle.$$ Since $[M,M]=\gamma_2(P)$ and $[M,M]=[x_2, C\cap M],$ we may choose $x_3, x_4$ so that $[x_2,x_3]=x_4$ and $[x_2,x_4]=x_5.$ Now let $y\in C\setminus M.$ Since $\gamma_2(P)=[x_2,M\cap C],$ there exists $x\in M\cap C$ such that $[yx,x_2]=1.$ This implies $x_1=yx\in Z(P).$ Then $x_1^p=y^p\in P^p=\gamma_3(P),$ so it is not restrictive to assume $x_1^p=x_5.$
\end{proof}

\begin{remark}
	Let $P=X$ be the group described in \cref{prop:unique-p5}. Then the map that sends $x_4$ to $x_4$ and $x_i\to x_i^{-1}$ if $i\in \{1,2,3,5\}$ can be extended to an automorphism $\alpha$ of $P$ of order 2. We verify that $(P,\alpha)$ is a maximal $(p,2)$-pair of rank 3. Suppose that $H$ is a proper subgroup of $P$ with $\dG(H)\geq \dG(P)=3.$ Then $p^3\leq |H|\leq p^4.$ If
	$|H|=p^4,$ then $H$ is a maximal subgroup of $P$, and therefore $\Phi(P)=\langle x_4,x_5\rangle \subseteq H$. Moreover either $H=\Omega_1(P)$ or ${\rm{exp}}(H)=p^2.$ In any case $x_5$ belongs to $\Phi(H)$. So $\dG(H)\leq 3$ and if $\dG(H)=3,$ then $\Phi(H)=\langle x_5\rangle$. In the latter case, since $x_4\in H$ and $\alpha(x_4)=x_4,$ the map $\alpha$ does not induce a non-trivial power automorphism of $H/\Phi(H).$ Finally suppose that $H$ is elementary abelian of order $p^3$ and that $\alpha$ induces a non-trivial power automorphism on $H.$ It must be that $H$ is contained in $\Omega_1(P)$ and $x_4\notin H.$ This is impossible, since $H$ would be a maximal subgroup of $\Omega_1(P)$ and it would contain $\Phi(\Omega_1(P))=\langle x_4,x_5\rangle.$
\end{remark}

\begin{proposition}\label{prop:p5}
Let $(P,\alpha)$ be a maximal $(p,q)$-pair of rank $3$. 
If $p>3,$ then the order of $P$ is at most $p^5.$
\end{proposition}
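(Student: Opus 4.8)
The plan is to argue by contradiction: assume $(P,\alpha)$ is a maximal $(p,q)$-pair of rank $3$ with $p>3$ and $|P|\geq p^6$. By \cref{remQuot}, modding out an $\alpha$-invariant normal subgroup contained in $\Phi(P)$ yields again a maximal pair of rank $3$, so I may pass to a quotient and assume $|P|=p^6$. Since $p>3$, \cref{cor:q=2} forces $q=2$, and by \cref{lem:rk3cltwo} the class of $P$ is at least $3$. As $p>3$ and $|P|=p^6<p^p$ (here we use $p\geq 5$, hence $p^p\geq p^5\cdot p>p^6$ — wait, $5^5=3125>5^6$ is false; instead use that the class is at most $5<p-1$ once $p\geq 7$, and handle $p=5$ separately via $|P|=p^6\leq p^p=5^5$?; in fact $5^6>5^5$, so for $p=5$ one must invoke regularity differently). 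More robustly: by \cref{prop:no-exp-p} and the structure results, the class $c$ of $P$ satisfies $3\leq c\leq 5$, and in all these cases $c\leq p-1$ for $p\geq 7$, so $P$ is regular by \cref{lem:reg-basic}(1); the case $p=5$ should be dispatched by the same regularity input after checking $c\leq 4=p-1$, which will follow from the factor-size bounds below.

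Granting regularity, the key quantitative tool is the chain of corollaries in \cref{sec:reg}: with $q=2$, \cref{cor:reg-i} gives $\mho_1(\gamma_i(P))=\gamma_{i+2}(P)$ for all $i\geq 1$ and class $\geq 3$, and the corollary bounding $|\gamma_i(P):\gamma_{i+2}(P)|\leq p^d=p^3$ for $i>1$ applies. I would then combine this with \cref{lem:odd-stop}: if $|\gamma_3(P):\gamma_4(P)|=p$ then $P$ has class $3$, forcing $|P|\leq p\cdot|P/\gamma_2|\cdot|\gamma_2/\gamma_3|\cdot|\gamma_3|$ with each factor controlled by \cref{lem:corba}, which will be too small once one also uses that $\chi^2=1$ so $\chi^i=1$ iff $i$ even, hence $|\gamma_i/\gamma_{i+1}|=p$ whenever $i$ is odd and $\geq 3$ (by \cref{lem:corba}(2), as $\chi^i=\chi\neq 1$). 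So the only way to reach order $p^6$ is to have the even-index factors large; but \cref{lem:odd-stop} pins the top odd factor to $p$, forcing the class to be odd, and then the alternating pattern of factor sizes ($\gamma_1/\gamma_2$ of size $p^3$, $\gamma_2/\gamma_3\leq p^3$ with $\gamma_3/\gamma_4=p$, etc.) must be reconciled with $\sum=6$. I expect the case analysis to reduce to class exactly $3$ with factor vector $(p^3,p^2,p)$, i.e.\ $|P|=p^6$, $\gamma_2$ of order $p^3$, $\gamma_3$ of order $p$.

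To kill this last configuration — which is the main obstacle — I would mimic the argument in \cref{lemmap5}/\cref{prop:unique-p5}: set $C=\Cyc_P(\gamma_2(P))$ and $M=\Omega_1(P)$, use the Three Subgroups Lemma to show $[C,C]\subseteq\gamma_3(P)$, and then the bilinear commutator map $C/\gamma_2(P)\times C/\gamma_2(P)\to\gamma_3(P)$ together with $\chi\neq1$ (so $\alpha$ acts on $\gamma_3(P)$ through $\chi^3=\chi$, nontrivially, while it would have to act through $\chi^2=1$ on the image of an alternating form) forces $C$ abelian. Counting: $|P:C|=|\gamma_2(P):\gamma_3(P)|$-ish via non-degeneracy of the commutator pairing $P/C\times\gamma_2(P)/\gamma_3(P)\to\gamma_3(P)$, giving $|C|=p^4$ with $\dG(C)\leq 3$; then $\mho_1(C)=\gamma_3(P)$ by \cref{prop:reg-p} applied inside, so $C\cong C_{p^2}\times C_p\times C_p$. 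Finally, analyze $M=\Omega_1(P)$: using \cref{lem:abelian+-} on $M/\gamma_3(P)$ and \cref{-on quotients} exactly as in the proof of \cref{lemmap5}(5), one extracts an elementary abelian $\alpha$-invariant subgroup of rank $3$ inside a proper subgroup on which $\alpha$ acts through $\chi$, contradicting property \ref{it:c}. The hardest point is ensuring the dimension bookkeeping genuinely forbids $|P|=p^6$; the $\wedge^2$-argument from \cref{lem:rk3cltwo} generalizes, since $\dim\wedge^2(P/\gamma_2(P))=3$ when $\dG(P)=3$, and a $3$-dimensional target $\gamma_2/\gamma_3$ would make the commutator map an isomorphism with no kernel, after which $\Omega_1$ of the resulting group is forced too large, again violating \ref{it:a}.
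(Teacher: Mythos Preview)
Your case analysis misidentifies the configuration that survives. You claim (via \cref{lem:corba}(2)) that $|\gamma_i(P)/\gamma_{i+1}(P)|=p$ for odd $i\geq 3$; but \cref{lem:corba}(2) only yields $\dG(\gamma_i/\gamma_{i+1})<3$, i.e.\ the factor has order at most $p^2$, not $p$. As a result you never consider the possibility $|\gamma_3(P):\gamma_4(P)|=p^2$. In fact, the case you retain --- class $3$ with factor vector $(p^3,p^2,p)$ --- is the \emph{easy} one: modding out $\gamma_3(P)$ (which lies in $\Phi(P)$) gives, by \cref{remQuot}, a rank-$3$ maximal pair of class $2$ with $|\gamma_2|=p^2$, directly contradicting \cref{lem:rk3cltwo}. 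Your assertion that this case is ``too small'' is simply wrong: $p^3\cdot p^2\cdot p=p^6$.

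The case that genuinely needs work is class $3$ with $|\gamma_2(P)/\gamma_3(P)|=p$ and $|\gamma_3(P)|=p^2$, and this is where the paper's argument lives. Here $|C_P(\gamma_2(P)):\gamma_2(P)|=p$ (so $|C|=p^4$, not the configuration you analyse), and the commutator map $\wedge^2(P/\gamma_2(P))\to\gamma_2(P)/\gamma_3(P)$ has a $2$-dimensional kernel because the target is $1$-dimensional. Picking a decomposable element $g\gamma_2\wedge h\gamma_2$ in that kernel produces $M=\langle g,h\rangle\gamma_2(P)$ of order $p^5$ with $\gamma_2(M)\subseteq\gamma_3(P)$; the paper then argues $M$ is abelian (using $\chi\neq 1$), contradicting $|C_P(\gamma_2(P))|=p^4$. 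Your proposed endgame --- mimicking \cref{lemmap5} with $C$ and $\Omega_1(P)$ --- is tailored to the wrong factor vector, and the counting you sketch (``$|P:C|=|\gamma_2:\gamma_3|$-ish'') does not match the actual numerics of the surviving case.

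A minor point: your regularity worry for $p=5$ is unfounded but for a cleaner reason than you suggest. Since $|P|=p^6$ and $|P/\gamma_2(P)|=p^3$, one has $|\gamma_2(P)|=p^3$; with class $\geq 3$ each factor $\gamma_i/\gamma_{i+1}$ for $i\geq 2$ is nontrivial, so there are at most three of them and the class is at most $4\leq p-1$ for every $p\geq 5$. Then \cref{lem:reg-basic}(1) applies uniformly.
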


\begin{proof}
 Assume for a contradiction that $|P|=p^6$ and that $p>3$. Then $P$ has class at least $3$ by \cref{lem:rk3cl2} and, since $P$ has rank $3$, the index $|\gamma_3(P):\gamma_4(P)|$ is either $p$ or $p^2$. The group $P$ is regular thanks to \cref{lem:reg-basic}\ref{it:reg1}. Since In the first case, \cref{lem:odd-stop} yields that $P$ has class $3$ and that $|\gamma_2(P):\gamma_3(P)|=p^2$: this contradicts \cref{lem:rk3cl2} combined with \cref{remQuot}. 
 We have thus proved that $P$ has class $3$ and that $|\gamma_3(P)|=p^2$. 
 Observe now that the surjective homomorphism $\wedge^2(P/\gamma_2(P))\rightarrow\gamma_2(P)/\gamma_3(P)$ that is induced by the commutator map has a non-trivial kernel. We fix $g\gamma_2(P)\wedge h\gamma_2(P)\neq 0$ in such kernel and define $M=\gen{g,h}\gamma_2(P)$. Then $M$ has order $p^5$ and $\gamma_2(M)$ is contained in $\gamma_3(P)$. Since $\chi\neq 1$, it follows that $\gamma_2(M)=1$, contradicting the fact that 
$|\Cyc_P(\gamma_2(P)):\gamma_2(P)|=p$. 
\end{proof}

\begin{remark}
The example described in \cref{prop:unique-p5} occurs also when $p=3.$ There exists, however, another  non-isomorphic maximal $(3,2)$-pair with $P$ of order $3^5,$ namely $P$ is the direct product $C_3\times X,$ where $X$ is isomorphic to \texttt{SmallGroup(81,10)}. This is indeed a consequence of \cref{remDP} and \cref{prop:d=2p=3}.
A computational check through the \texttt{SmallGroup} library of \texttt{GAP} reveals that there is also a unique possibility of order $3^6$: if $\tilde P$ is equal to \texttt{SmallGroup(729,148)},
then $\tilde P$ has an automorphism $\tilde \alpha$ of order 2, with the property that the semidirect product 
$\tilde P\rtimes \langle \tilde \alpha \rangle$, which is isomorphic to \texttt{SmallGroup(1458,805)}, is a 4-maximal group. This information allows us to prove that there exists no maximal
$(3,2)$-pair $(P,\alpha)$ of rank 3 with $|P|\geq 3^7.$ For this purpose it suffices to exclude the possibility $|P|=3^7.$ Assume by contradiction that such a group $P$ exists. Then $\tilde P$ would be an epimorphic image of $P.$ Since $\tilde P$ has nilpotency  class 3, the class of $\tilde P$ would be either 3 or 4.
In the first case,  $|\gamma_3(P)|=3\cdot | \gamma_3(\tilde{P})|=3^3,$ but this is impossible. So $|\gamma_4(P)|=3$ and $P/\gamma_4(P)\cong \tilde P.$ There are 1023 groups $P$ with $|P|=3^7$ satisfying $|\gamma_2(P)|=|\Phi(P)|=3^4$ and $|\gamma_4(P)|=3$, but a computational check shows that none of them satisfies $P/\gamma_3(P)\cong \tilde P.$
\end{remark}

\begin{remark}
When $d>3,$ there exist  maximal  pairs $(P,\alpha)$ of rank $d$, with $P$ of class 2, but $\gamma_2(P)$ is
non-cyclic. For example, there are three maximal pairs $(P,\alpha)$ or rank $4$, up to isomorphism, such that $|P|=3^6$, and $\gamma_2(P)\cong C_3\times C_3$. These are \texttt{SmallGroup(1458,1540)}, \texttt{SmallGroup(1458,1576)}, \texttt{SmallGroup(1458,1613)}. 
\end{remark}

\end{document}